\newtheorem{thm}{Theorem}%[section] (If you want theorem numbered
\newtheorem{lem}[thm]{Lemma}%               with section number.  Same
\newtheorem{cor}[thm]{Corollary}%       goes for lemmas, etc.)
\newtheorem{prop}[thm]{Proposition} %--> \begin\end{theorem,lemma,...}
\newtheorem{rem}[thm]{Remark}
\newtheorem{defn}[thm]{Definition}
\newtheorem{exmp}[thm]{Example}
\newtheorem{exc}[thm]{Exercise}
\date{}
\begin{document}
\setlength{\baselineskip}{16pt}
\title{On the Categorification of the M$\ddot{\mbox{o}}$bius Function}
\author{Rafael D\'\i az }
\maketitle

\begin{abstract}
In these notes we study several categorical generalizations of the M$\ddot{\mbox{o}}$bius function and discuss the relations between the various
approaches. We emphasize the topological and geometric meaning of these constructions.
\end{abstract}

\section{Introduction}

The M$\ddot{\mbox{o}}$bius function  $\mu: \mathbb{N}_+ \longrightarrow \mathbb{Z}$ is the map from the positive natural numbers to the integers such that $\mu(1)=1$,  $\mu(n)=0$ if $n$ has repeated prime factors,
and $\mu(n)=(-1)^k$ if $n$ is the product of $k$ distinct prime numbers.
It was introduced by  M$\ddot{\mbox{o}}$bius in 1832 and it has been since an important tool in number theory, complex analysis, and combinatorics. Our goal in these notes is to gently introduced several generalizations of the M$\ddot{\mbox{o}}$bius function
originating from the viewpoint of category theory, which have emerged thanks to the contribution of various authors.\\

We begin with a  review of  the classical M$\ddot{\mbox{o}}$bius theory where the main character is the set of the positive natural
numbers $\mathbb{N}_+$, which for our purposes can be regarded either as a poset or as a monoid.
We adopt first the poset viewpoint. It leads to the construction of M$\ddot{\mbox{o}}$bius theories for locally finite posets (Rota), for locally finite directed graphs, for locally finite categories (Haigh, Leinster), and for essentially locally finite categories.  Then we adopt the monoid viewpoint, which leads
to the development of  M$\ddot{\mbox{o}}$bius theories for finite decomposition monoids, for
M$\ddot{\mbox{o}}$bius categories (Haigh, Leroux), and for essentially finite decomposition categories.\\

Once we have developed M$\ddot{\mbox{o}}$bius theories for a variety of mathematical objects, a couple of natural question arise. How the various theories relate to each other? We will address this question as we develop the various theories along the notes. As a rule these relations are quite subtle a non  fully functorial. For example, one may wonder how the relation between the category of posets and the category of finite decomposition monoids may be characterized.\\

The second question asks for the common features among M$\ddot{\mbox{o}}$bius theories. In the examples we have found the following structural features:
\begin{itemize}
\item The theory depends on a fixed ring $R$ and applies to some category of objects $C$. For simplicity in these notes we consider the characteristic zero case. In a fully categorified approach one may even want to let $R$ be a ring-like category.

\item There is a construction that associates a $R$-algebra $A_c$ to each object $c \in C$,  called the incidence or convolution algebra of $c$. The correspondence $c \mapsto A_c$ need not be functorial. However, it is functorial under isomorphism, i.e. we have a functor $$A: C_{\mathrm{g}} \longrightarrow R\mbox{-}\mathrm{alg}$$ from the underlying groupoid of $C$ to the category of $R$-algebras. Often the category $C$ comes equipped with a notion of weak equivalences, which are a suitable family of morphisms between objects of $C$. The construction may fail to be functorial under equivalences even if it is functorial under isomorphism.

\item The category $C$ is monoidal with product $\times: C \times C \longrightarrow C$. The category of $R$-algebras is monoidal with product the (suitable completed) tensor product $\otimes$. The functor $$A: C_{\mathrm{g}} \longrightarrow R\mbox{-}\mathrm{alg}$$ is monoidal, i.e. there are natural isomorphisms
    $$A_{c\times d} \ \simeq \ A_c \otimes A_d.$$

\item There is a simple characterization of the units (invertible elements) in $A_c$. Moreover, there is a terminating recursive procedure that finds the inverse $u^{-1}$ for each unit  $u$ of $A_c.$ There are some distinguished unit elements, say $\xi_c \in A_c$, usually with a straightforward definition, such that their inverses $\mu_c$ are unexpectedly useful having interesting topological and geometrical properties.

   \item Often each algebra $A_c$ comes equipped  with a natural $R$-module $M_c$, leading to the M$\ddot{\mbox{o}}$bius inversion formula: $$a=b\xi_c \ \ \ \mbox{if and only if} \ \ \ b=a\mu_c  \ \ \ \mbox{for all} \ \ \ a, b \in M_c.$$

\end{itemize}

We  proceed to develop our examples of M$\ddot{\mbox{o}}$bius theories, providing along the notes references to some of the main contributions in the field, and discussing the relation between the various theories. We focus on the topological approach to understand the meaning of the M$\ddot{\mbox{o}}$bius functions, in particular, the Euler characteristics for simplicial groupoids will be useful for us. We emphasize that there is not a unique categorical generalization for the M$\ddot{\mbox{o}}$bius functions, but rather several approaches, each with its on advantages and applications.  Although we explore various routes, this work is not meant to be an exhaustive study. That would be a task exceeding the limits of these short notes. For example, we do not cover the advances developed by Cartier and Foata \cite{cart},
D$\ddot{\mbox{u}}$r \cite{dur}, Fiore, L$\ddot{\mbox{u}}$ck and Sauer \cite{fiore}, among others.  \\

This notes were prepared for the CIMPA school "Modern Methods in Combinatorics," San Luis, Argentina 2013. For the reader convenience we include a few exercises.

\section{Classical M$\ddot{\mbox{o}}$bius Theory}

In this section we review the basic elements of the classical M$\ddot{\mbox{o}}$bius theory \cite{apostol}.  Let
$\mathbb{N}_+$ be the set of positive natural numbers and $R$ be a commutative ring with identity.

\begin{defn}{\em
The M$\ddot{\mbox{o}}$bius function is the map $\mu: \mathbb{N}_+ \longrightarrow R$ given by
$$\mu(n)=
\left\{\begin{array}{cc}

1 & \ \mathrm{if} \ n =1, \ \ \ \ \ \ \ \ \ \ \ \ \ \ \ \ \ \ \ \ \ \ \ \ \ \ \ \ \ \ \ \ \ \ \ \ \ \ \ \ \ \ \ \ \ \ \ \ \ \ \ \ \ \  \\

0 & \  \mathrm{if} \ n \ \mathrm{has \ repeated \ prime \ factors,}   \ \ \ \ \  \ \ \ \ \  \ \ \ \ \ \ \ \ \ \ \ \ \ \ \\

(-1)^k & \ \mathrm{if} \ n = p_1....p_k,\ \mathrm{with} \  p_j  \mathrm{\ distinct\ prime \ numbers}.\ \ \ \

\end{array}\right.$$}
\end{defn}

Let $[\mathbb{N}_+,R]$ be the set of maps from $\mathbb{N}_+$  to $R$. We write $d|n$ if $d$ divides $n$.

\begin{defn}{\em
The Dirichlet product $\star$ on $[\mathbb{N}_+,R]$ is given on maps $f,g \in [\mathbb{N}_+,R] $ by:
$$f\star g(n) \ = \ \sum_{d|n}f(d)g(n/d)\ = \ \sum_{d|n}f(n/d)g(d)=\sum_{cd=n}f(c)g(d).$$
The unit for $\star$ is the map $1 : \mathbb{N}_+ \longrightarrow \mathbb{Z}$ given by $1(1)=1\ $ and $\ 1(n)=0\ $ for $\ n \geq 2.$}
\end{defn}

The product $\star$ turns $[\mathbb{N}_+,R]$ into a commutative $R$-algebra. Next result is known as the M$\ddot{\mbox{o}}$bius inversion formula.

\begin{prop}{\em \textbf{(M$\ddot{\mbox{o}}$bius Inversion Formula)} \\
Let $\xi \in [\mathbb{N}_+,R]$ be the map constantly equal to $1$.
The M$\ddot{\mbox{o}}$bius function $\mu \in [\mathbb{N}_+,R]$ is the $\star$-inverse of $\xi.$
Thus for $f \in [\mathbb{N}_+,R] $, we have that $$\ g=f \star \xi  \ \ \ \ \mbox{if and only if } \ \ \ \ f =  g \star \mu.$$ Equivalently,
$$g(n)\ = \ \sum_{d|n}f(d) \ \ \ \ \ \mbox{if and only if}\ \ \ \ \  f(n)\ =\ \sum_{d|n}\mu(n/d)g(d) .$$
}
\end{prop}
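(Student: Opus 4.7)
The plan is to reduce the biconditional to the single assertion that $\mu$ is the two-sided $\star$-inverse of $\xi$, and then verify this by a direct computation. Once one knows $\mu \star \xi = \xi \star \mu = 1$, the associativity and commutativity of the Dirichlet product (already noted after its definition) give at once that $g = f \star \xi$ implies $g \star \mu = f \star \xi \star \mu = f \star 1 = f$, and conversely $f = g \star \mu$ implies $f \star \xi = g \star \mu \star \xi = g$. Commutativity also allows us to focus on a single product, say $\mu \star \xi$, rather than both sides separately.

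To show $(\mu \star \xi)(n) = 1(n)$, I would unwind the definitions: $(\mu \star \xi)(n) = \sum_{d|n} \mu(d) \xi(n/d) = \sum_{d|n} \mu(d)$, since $\xi$ is constantly $1$. For $n=1$ the only divisor is $1$ itself and $\mu(1) = 1$, so the identity holds trivially. For $n \geq 2$, write $n = p_1^{a_1} \cdots p_r^{a_r}$ with distinct primes $p_j$ and $a_j \geq 1$. Divisors carrying a repeated prime contribute $0$ by the definition of $\mu$, so only squarefree divisors survive, and these are in bijection with subsets $S \subseteq \{p_1,\ldots,p_r\}$. Each such $S$ contributes $(-1)^{|S|}$, so
$$\sum_{d|n}\mu(d) \ = \ \sum_{k=0}^{r}\binom{r}{k}(-1)^k \ = \ (1-1)^r \ = \ 0.$$

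The argument is essentially the classical one from elementary number theory, and there is no real obstacle. The only step that deserves attention is the squarefree reduction: one must notice that $\mu$ annihilates non-squarefree divisors, turning the sum over divisors of $n$ into a signed sum over subsets of its prime support, at which point the binomial theorem finishes the computation. Everything else is bookkeeping about the unit and associativity of $\star$.
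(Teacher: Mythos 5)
Your proposal is correct and follows essentially the same route as the paper: both reduce the inversion statement to the identity $\xi\star\mu=1$ and verify it by restricting the divisor sum to squarefree divisors and applying the binomial theorem to get $(1-1)^k=0$. The only (harmless) difference is that you spell out explicitly the associativity argument deriving the biconditional from the inverse property, which the paper leaves implicit.
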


\begin{proof}
We show that $\xi \star \mu=1$. Since $$\xi \star \mu(1) \ = \ \xi(1) \mu(1)\ =\ 1,$$ it only remains to check that $\xi \star \mu(n)=0 \ $ for $\ n \geq 2$.
Given $n=p_1^{a_1}...p_k^{a_k}$, with $p_j$ distinct prime numbers, we have  that
$$\xi \star \mu(p_1^{a_1}...p_k^{a_k})\ = \ \sum_{d|p_1^{a_1}...p_k^{a_k}}\mu(d)\ = \ 1 \ + \ \sum_{l=1}^k\sum_{A \subseteq [k], \ |A|=l }\mu\left(\prod_{i \in A}p_i\right)\ =  $$ $$1 \ + \  \sum_{l=1}^k\sum_{A \subseteq [k], \ |A|=l }(-1)^l \ = \ \sum_{l=0}^k(-1)^l{k \choose l}\ =\ (1-1)^k\ = \ 0.$$
\end{proof}

\begin{exc}{\em   Show that $f \in  [\mathbb{N}_+,R]$ is a $\star$-unit if and only if $f(1)$ is a unit in $R$.
}
\end{exc}

\begin{exc}{\em A map $f\in [\mathbb{N}_+,R]$ is multiplicative if $f(ab)=f(a)f(b)$ for $a,b$ coprime.
Show that $1, \xi$ and $\mu$ are multiplicative functions. Show that $f\star g $ is multiplicative if $f$ and $g$ are multiplicative.
Show that the $\star$-inverse of a multiplicative function is multiplicative.
}
\end{exc}

Rooted in number theory the M$\ddot{\mbox{o}}$bius function also plays an important role in complex analysis trough the theory of Dirichlet series.

\begin{defn}{\em A Dirichlet series with coefficients in  $R$ is a formal power series
$$\sum_{n=1}^{\infty}\frac{a_n}{n^s}, \ \ \ \ \ \mbox{where} \ \  a_n \in R.$$ We let $\mathbb{D}_R$ be the $R$-algebra of Dirichlet series
with the $R$-linear product given by $$\frac{1}{n^s}\frac{1}{m^s}\ =\ \frac{1}{(nm)^s}.$$
}
\end{defn}

For our present purposes we may regard $s$  as a formal variable. In most applications one usually works over the complex numbers $\mathbb{C}$, the variable $s$ is $\mathbb{C}$-valued,  and convergency issues are of the greatest importance. The product of convergent Dirichlet series is just the product of complex valued functions.\\

The Dirichlet algebra $\mathbb{D}_R$ is isomorphic to the $R$-algebra $R<<x_1,...,x_n,...>>$ of formal $R$-linear combinations in the variables $x_n,$ with  the $R$-linear product given by
$$x_nx_m=x_{nm}, \ \  \mbox{ via the map sending} \ \  x_n \ \  \mbox{to} \ \  \frac{1}{n^s}.$$
Note that if $n=p_1^{a_1}...p_k^{a_k}$, then we have that $x_n = x_{p_1}^{a_1}...x_{p_k}^{a_k}$. Therefore $\mathbb{D}_R$ is isomorphic to  the algebra $R[[x_2,x_3, x_5,...]]$ of formal power series with coefficients in $R$ in the variables $x_p$, with $p$ a prime number.\\

\begin{prop}\label{pds}{\em The map $D:[\mathbb{N}_+,R] \longrightarrow \mathbb{D}_R$ sending $f\in [\mathbb{N}_+,R]$ to the Dirichlet series
$$D_f\ = \ \sum_{n=1}^{\infty}\frac{f(n)}{n^s}\ \ \ \ \mbox{is a ring isomorphism.}$$
}
\end{prop}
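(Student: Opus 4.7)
The plan is to verify in turn the three conditions that $D$ is a bijection of underlying $R$-modules, sends the multiplicative unit to the multiplicative unit, and is multiplicative, with the last being the substantive point.

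First, I would observe that $D$ is manifestly $R$-linear and that it is a bijection onto $\mathbb{D}_R$: given any Dirichlet series $\sum_n a_n/n^s \in \mathbb{D}_R$, the formula $f(n) = a_n$ defines a unique preimage in $[\mathbb{N}_+, R]$, and conversely $D_f$ determines $f$ by reading off coefficients, since $\mathbb{D}_R$ is defined as the $R$-module of formal power series in the symbols $1/n^s$. Next, the unit $1 \in [\mathbb{N}_+, R]$, which takes the value $1$ at $n=1$ and $0$ elsewhere, is sent to the constant series $1/1^s = 1$, the multiplicative unit of $\mathbb{D}_R$.

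The heart of the proof is showing $D_{f \star g} = D_f \cdot D_g$. I would compute directly, using the defining $R$-bilinear product rule $\frac{1}{n^s}\cdot\frac{1}{m^s} = \frac{1}{(nm)^s}$ on $\mathbb{D}_R$:
$$D_f \cdot D_g \ = \ \left(\sum_{n=1}^{\infty}\frac{f(n)}{n^s}\right)\!\left(\sum_{m=1}^{\infty}\frac{g(m)}{m^s}\right) \ =\ \sum_{n,m \geq 1}\frac{f(n)g(m)}{(nm)^s}.$$
Regrouping the double sum by the value $k = nm$, which runs over $\mathbb{N}_+$ and for each $k$ receives contributions from pairs $(n,m)$ with $nm = k$, gives
$$D_f \cdot D_g \ =\ \sum_{k=1}^{\infty}\frac{1}{k^s}\sum_{nm = k}f(n)g(m) \ =\ \sum_{k=1}^{\infty}\frac{(f \star g)(k)}{k^s} \ =\ D_{f \star g},$$
by the very definition of the Dirichlet product recalled above.

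The only mild obstacle is justifying the regrouping, but this is purely formal because $\mathbb{D}_R$ consists of formal series: for each fixed $k$, the inner sum $\sum_{nm=k} f(n)g(m)$ is finite (it is indexed by divisors of $k$), so no convergence issue arises, and the reindexing is just a bijection of index sets. With multiplicativity established, combined with the bijectivity and preservation of the unit, $D$ is a ring isomorphism as claimed.
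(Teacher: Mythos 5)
Your proof is correct and is exactly the intended argument: the paper itself leaves this statement as an exercise (asking the reader to describe the product of Dirichlet series explicitly and then prove the proposition), and your computation $D_f\cdot D_g=\sum_k k^{-s}\sum_{nm=k}f(n)g(m)=D_{f\star g}$, together with the observations on linearity, bijectivity, and the unit, is precisely what that exercise calls for. The remark that each inner sum is finite, so the formal regrouping needs no convergence argument, is the right justification.
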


For example, the Dirichlet series $D_{\xi}$  associated with $\xi$ is the Riemann zeta function
$$\zeta(s) \ = \  \sum_{n=1}^{\infty}\frac{1}{n^s}, \ \ \ \ \mbox{and we have that} \ \ \ \zeta^{-1}(s) \ =\  \sum_{n=1}^{\infty}\frac{\mu(n)}{n^s}.$$

\begin{exc}{\em Describe explicitly the product of Dirichlet series and prove Proposition \ref{pds}.
}
\end{exc}

\begin{exc}{\em For $p \geq 2$ a primer number, let $f_p \in [\mathbb{N}_+,\mathbb{Z}]$ be such that $f_p(n)=1$ if $n=p^l$ for some $l \in \mathbb{N}$, and  $f_p(n)=0$ otherwise.
Find $f_p^{-1}, D_{f_p}$ and $D_{f_p}^{-1}.$
}
\end{exc}

\begin{exc}{\em Let $\eta \in [\mathbb{N}_+,\mathbb{Z}]$ be such that $\eta(1)=1$, $\eta(p)=-1$ for $p$ a prime number, and $\eta(n)=0$ otherwise.
Find $\eta^{-1}, D_{\eta}$ and $D_{\eta}^{-1}.$
}
\end{exc}

We close this section with a couple of facts (kind of beyond the main topic of these notes) that  show the relevance of the  M$\ddot{\mbox{o}}$bius function in number theory and complex analysis. The first fact is a reformulation in terms of the M$\ddot{\mbox{o}}$bius function of a main theorem in number theory on the asymptotic behaviour of prime numbers \cite{apostol}.

\begin{thm}{\em (\textbf{Prime Number Theorem and the M$\ddot{\mbox{o}}$bius Function})\\
Let $\pi: \mathbb{N}_+ \longrightarrow \mathbb{N}$ be such that $\pi(n)$ counts the prime numbers less than or equal to $n$.
We have that:
$$ \lim_{n \to \infty} \frac{\pi(n)ln(n)}{n}\ =\ 1, \ \ \ \mbox{or equivalently,} \ \ \ \lim_{n \to \infty}\frac{1}{n}\sum_{k \leq n}\mu(k)\ = \ 0.$$}
\end{thm}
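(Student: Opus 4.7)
The plan is to split the theorem into two parts: the asymptotic $\pi(n)\ln(n)/n \to 1$ itself (the Prime Number Theorem, PNT), and its equivalence with $\frac{1}{n}\sum_{k \leq n}\mu(k) \to 0$. The bridge is the identity $\Lambda = \mu * \log$ (Dirichlet convolution), equivalently $-\zeta'(s)/\zeta(s) = (1/\zeta(s))\cdot(-\zeta'(s))$, which is read off from the factorization $1 = \zeta(s)\cdot\sum_n \mu(n)/n^s$ established in Proposition~\ref{pds}.

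A first standard reduction replaces $\pi(n)$ by Chebyshev's $\psi(n) = \sum_{k \leq n}\Lambda(k)$ via $\psi(x) = \theta(x) + O(\sqrt{x}\log^2 x)$ and partial summation, turning PNT into the cleaner statement $\psi(n) \sim n$. Writing $M(n) := \sum_{k \leq n}\mu(k)$, the implication $M(n) = o(n) \Rightarrow \psi(n) \sim n$ is the easier half: I would expand $\psi(n) = \sum_{k \leq n}\sum_{d\mid k}\mu(d)\log(k/d)$ using $\Lambda = \mu * \log$, swap summations to get $\sum_{d \leq n}\mu(d)\sum_{m \leq n/d}\log m$, insert the elementary asymptotic $\sum_{m \leq y}\log m = y\log y - y + O(\log y)$, and conclude by Abel summation against $M$.

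The reverse direction $\psi(n) \sim n \Rightarrow M(n) = o(n)$ is genuinely harder, since $\mu$ is sign-oscillating and is not of small bounded variation. The key tool is the renewal-type identity
$$M(x)\log x \ = \ -\sum_{d \leq x}\Lambda(d)M(x/d) \ + \ O(x),$$
obtained by differentiating $\zeta(s)\cdot(1/\zeta(s))=1$ to get $\mu(n)\log n = -(\Lambda * \mu)(n)$, summing over $n \leq x$, and using the elementary bound $\sum_{n \leq x}\log(x/n) = x + O(\log x)$. Combined with the hypothesis $\psi(x) = x + o(x)$, an Axer--Landau Tauberian argument then extracts the required cancellation in $M$.

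Finally, PNT itself requires genuine complex analysis: meromorphic continuation of $\zeta(s)$ to $\mathrm{Re}(s) > 0$ with a unique simple pole at $s=1$, the non-vanishing $\zeta(1+it) \neq 0$ for $t \in \mathbb{R}$ proved via the trigonometric inequality $3 + 4\cos\theta + \cos 2\theta \geq 0$, and the Wiener--Ikehara Tauberian theorem applied to $-\zeta'(s)/\zeta(s) - 1/(s-1)$. The main obstacle of the entire theorem lies in this last step --- the non-vanishing of $\zeta$ on the line $\mathrm{Re}(s)=1$ --- which is where all the depth of the Prime Number Theorem is concentrated; everything else in the statement is bookkeeping around the convolution identity $\Lambda = \mu * \log$.
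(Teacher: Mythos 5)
The paper does not actually prove this theorem: it is stated as a known fact with a pointer to Apostol's book, so there is no internal argument to measure your proposal against. Judged on its own terms, your roadmap is the classical one and most of its ingredients check out: $\Lambda=\mu\star\log$ is correct; the reduction of $\pi(n)\log(n)/n\to 1$ to $\psi(n)\sim n$ via the Chebyshev functions and partial summation is standard; the renewal identity $M(x)\log x=-\sum_{d\leq x}\Lambda(d)M(x/d)+O(x)$ follows correctly from $\mu(n)\log n=-(\Lambda\star\mu)(n)$ together with $\sum_{n\leq x}\log(x/n)=x+O(\log x)$; and the Landau--Axer treatment of the direction $\psi(x)\sim x\Rightarrow M(x)=o(x)$ (replacing $\Lambda$ by $\Lambda-1$ and using $\sum_{d\leq x}M(x/d)=1$) is the right device.

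The genuine gap is in what you call the easier half. If you expand $\psi(n)=\sum_{d\leq n}\mu(d)T(n/d)$ with $T(y)=\sum_{m\leq y}\log m=y\log y-y+O(\log y)$, the remainder terms alone contribute $O\left(\sum_{d\leq n}\log(n/d)\right)=O(n)$, which is exactly the order of the main term you are trying to isolate, and the surviving main terms are $n\sum_{d\leq n}\mu(d)\log(n/d)/d-n\sum_{d\leq n}\mu(d)/d$, whose control requires $\sum_{d\leq x}\mu(d)/d=o(1)$ and $\sum_{d\leq x}\mu(d)\log d/d=O(1)$ --- estimates of PNT strength that do not follow from $M(x)=o(x)$ by a naive Abel summation. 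The standard repair is to convolve $\mu$ not with $\log$ but with $\log-\tau+2\gamma\cdot 1$, where $\tau$ is the divisor function: its summatory function is $O(\sqrt{y})$ by Dirichlet's hyperbola estimate, and $\mu\star(\log-\tau+2\gamma\cdot 1)=\Lambda-1+2\gamma\cdot e$ with $e$ the convolution unit, so the problem becomes $\sum_{d\leq x}\mu(d)E(x/d)=o(x)$ for an $E(y)=O(\sqrt{y})$. That last step is again an Axer-type Tauberian statement, exploiting $M(x)=o(x)$; in other words, both directions of the equivalence need the same Tauberian lemma, not only the one you flagged as hard. With that correction, and with the complex-analytic proof of the PNT itself (which you describe accurately), the argument is the one found in the cited reference.
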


The second fact relates the M$\ddot{\mbox{o}}$bius function to the Riemann Hypothesis \cite{sarnak, tao}. The Riemann zeta function $\zeta(s)$ can be analytically extended to the complex plane with a pole at $s=1.$ The  $\zeta$ function has zeroes
at $s=-2,-4,-6,...$, the so called trivial zeroes. Any other zero is called nontrivial, and the Riemann Hypothesis claims that all of them have real part equal to $\frac{1}{2}.$

\begin{thm}{\em (\textbf{Riemann Hypothesis and the M$\ddot{\mbox{o}}$bius Function})\\
The real part of the non-trivial zeroes of the Riemann zeta function is $\frac{1}{2}$ if and only if for any $\epsilon >0$ there exists $M >0$ such that
$$\sum_{k \leq n}\mu(k)\ < \ Mn^{\frac{1}{2}+ \epsilon }.$$
}
\end{thm}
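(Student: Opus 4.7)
My plan is to connect the two statements through the Dirichlet series $1/\zeta(s)=\sum_{n\ge 1}\mu(n)/n^s$ (valid for $\mathrm{Re}(s)>1$ by Proposition \ref{pds}). Writing $S(x):=\sum_{k\le x}\mu(k)$ for the Mertens sum, Abel summation gives, for $\mathrm{Re}(s)>1$,
$$\frac{1}{\zeta(s)} \ = \ s\int_1^{\infty}\frac{S(x)}{x^{s+1}}dx.$$
This identity is the pivot: pointwise bounds on $S$ control the half-plane in which the integral converges and hence to which $1/\zeta$ extends holomorphically, while conversely any analytic extension of $1/\zeta$ past the line $\mathrm{Re}(s)=1$ reflects cancellation in the partial sums of $\mu$.

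For the easy direction, I will assume $S(x)<Mx^{1/2+\epsilon}$ for every $\epsilon>0$. The integral above then converges absolutely and locally uniformly on $\{\mathrm{Re}(s)>1/2\}$, producing a holomorphic extension of $1/\zeta(s)$ to that half-plane. Consequently $\zeta$ has no zero there, and the functional equation for $\zeta$ confines all nontrivial zeros to the critical line $\mathrm{Re}(s)=1/2$.

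For the hard direction, assuming the Riemann hypothesis, I will recover $S(x)$ via a truncated Perron formula,
$$S(x) \ = \ \frac{1}{2\pi i}\int_{c-iT}^{c+iT}\frac{x^s}{s\zeta(s)}ds \ + \ E(x,T),$$
with $c>1$, and then shift the vertical contour leftward to $\mathrm{Re}(s)=1/2+\epsilon$. No residues are picked up: $s=1$ is a zero of $1/\zeta$ since $\zeta$ has a pole there, and RH excludes any other singularity of the integrand in $\mathrm{Re}(s)>1/2$. The shifted integral and the horizontal connectors are then to be estimated using the conditional subconvexity bound $|1/\zeta(\sigma+it)|\ll_{\epsilon}|t|^{\epsilon}$, valid uniformly for $\sigma\ge 1/2+\epsilon$ under RH; balancing $T$ as a suitable power of $x$ yields the desired $S(x)=O(x^{1/2+\epsilon})$.

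The principal obstacle lies in this second direction, specifically in establishing the subconvexity estimate for $1/\zeta$ just to the right of the critical line under RH. The standard route passes through the Hadamard factorization of $\zeta$ over its (now critical) zeros together with the Borel--Carath\'eodory lemma applied to $\log\zeta$, and is considerably more delicate than the convergence argument used in the easy direction. A secondary technical point is the uniform control of the truncation error $E(x,T)$ as $T\to\infty$, which is routine but requires careful bookkeeping.
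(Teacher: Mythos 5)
The paper does not prove this theorem: it is stated as a known fact, with the proof deferred to the references (Sarnak, Tao), so there is no in-paper argument to compare yours against. Judged on its own terms, your proposal correctly identifies the standard mechanism. The easy direction is essentially complete: granting the bound $|S(x)|<Mx^{1/2+\epsilon}$ (note the theorem as printed omits the absolute value, which you should add, since a one-sided bound does not give absolute convergence), the identity $1/\zeta(s)=s\int_1^\infty S(x)x^{-s-1}\,dx$ furnishes a holomorphic continuation of $1/\zeta$ to $\mathrm{Re}(s)>1/2$, hence $\zeta$ is zero-free there, and the functional equation handles the reflected half-plane. That part would survive refereeing.

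The genuine gap is in the converse direction, and you have named it yourself without closing it: the entire content of ``RH implies $S(x)=O(x^{1/2+\epsilon})$'' is the conditional estimate $|1/\zeta(\sigma+it)|\ll_\epsilon |t|^{\epsilon}$ for $\sigma\ge \tfrac12+\epsilon$, which you invoke but do not establish. Without it the contour shift to $\mathrm{Re}(s)=\tfrac12+\epsilon$ proves nothing, since the shifted integral cannot be bounded. Deriving that estimate from RH requires the Borel--Carath\'eodory lemma applied to $\log\zeta$ in circles avoiding the zeros, together with the Hadamard product (or the explicit formula for $\zeta'/\zeta$), and is an argument of comparable length to everything you have written; labelling it ``the standard route'' is a citation, not a proof. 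A second, smaller gap: the truncation error $E(x,T)$ in Perron's formula involves $\sum_n |\mu(n)| \min(1, x/(T|x-n|))$-type terms whose uniform control, and the subsequent optimization of $T$ as a power of $x$, you defer as ``routine bookkeeping'' without carrying it out. As it stands the proposal is an accurate roadmap of the classical proof, matching what the cited sources do, but the hard implication is not proved.
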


\section{Locally Finite Posets}

In the 60's a comprehensive M$\ddot{\mbox{o}}$bius theory for locally finite partially ordered sets (posets)
was developed by Gian-Carlo Rota and his collaborators, among them Crapo, Stanley, Schmitt. Posets  are actually a particular kind of categories, so this early development may already be regarded as a categorification of the classical M$\ddot{\mbox{o}}$bius theory. For a classical introduction to categories the reader may consult Mac Lanes's book \cite{mac}. Another, quite readable, introduction to the subject is provided by    Lawvere and Schanuel \cite{ls}.
\\

Almost all the results in this section can be found in the first chapter of \cite{ro}, where the reader will find a nice mixture of original and review papers. The newer results
in this section are a discrete analogue for the Gauss-Bonnet theorem for finite posets, and the Leinster's characterization of the relation between  M$\ddot{\mbox{o}}$bius  and matrix inversion for finite
posets \cite{le,le2}. \\

A partial order on a set $X$ is a reflexive, antisymmetric, and transitive relation on $X$.
A  poset (partially ordered set) is a pair $(X, \leq)$ where $X$ is a set, and $\leq$ is a partial order on $X$. Morphism between posets are order preserving maps.
For $x \leq y$ in $X$, the  interval $[x,y]$ is the subset of $X$ given by
$$[x,y]=\{z \in X \ | \ x \leq z \leq y \} .$$ We let $$\mathbb{I}_X=\{[x,y]\ | \ x,y \in X, \ \ x \leq y \}$$ be the set of all intervals of $X$.

\begin{defn}{\em A poset is locally finite if its intervals are finite sets.
}
\end{defn}

We need a few algebraic notions such as coalgebras and Hopf algebras. The reader may consult Kassel's book  $\cite{kass}$  for a comprehensive introduction to these notions in the context quantum groups, and Cartier's notes \cite{cart1} for an historical introduction highlighting the topological connection.

\begin{lem}{\em Let  $X$ be a locally finite poset. The free $R$-module $<\mathbb{I}_X>$ generated by $\mathbb{I}_X$ together with the $R$-linear maps
$$\Delta: <\mathbb{I}_X> \ \longrightarrow \ <\mathbb{I}_X> \otimes <\mathbb{I}_X> \ \ \ \ \textrm{and} \ \ \ \ \epsilon: <\mathbb{I}_X> \ \longrightarrow \ R$$ given on generators, respectively, by
$$\Delta[x,z] \ = \ \sum_{x\leq y \leq z}[x,y]\otimes [y,z] \ \ \ \ \ \mbox{and} \ \ \ \ \
\epsilon [x,y] \ = \ \left\{\begin{array}{cc}
1 & \ \mathrm{if} \ x=y,  \\

0 & \  \mathrm{if} \ x \neq y,
\end{array}\right.$$
is a $R$-coalgebra.}
\end{lem}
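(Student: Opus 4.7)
The plan is to verify the two defining axioms of a coalgebra, namely coassociativity $(\Delta \otimes \mathrm{id})\circ \Delta = (\mathrm{id} \otimes \Delta)\circ \Delta$ and the counit conditions $(\epsilon \otimes \mathrm{id})\circ \Delta = \mathrm{id} = (\mathrm{id} \otimes \epsilon)\circ \Delta$ (under the canonical identifications $R \otimes \langle\mathbb{I}_X\rangle \simeq \langle\mathbb{I}_X\rangle \simeq \langle\mathbb{I}_X\rangle \otimes R$). Before starting, I would point out that local finiteness is precisely what makes $\Delta[x,z]$ a finite sum in $\langle\mathbb{I}_X\rangle \otimes \langle\mathbb{I}_X\rangle$, so that $\Delta$ and $\epsilon$ are well-defined $R$-linear maps; it suffices to check the axioms on generators $[x,z]$.

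For coassociativity, I would compute both iterated coproducts directly. Applying $(\Delta \otimes \mathrm{id})\circ \Delta$ to $[x,z]$ gives
$$\sum_{x \leq y \leq z} \Delta[x,y]\otimes [y,z] \ =\  \sum_{x \leq w \leq y \leq z} [x,w]\otimes [w,y]\otimes [y,z],$$
while applying $(\mathrm{id} \otimes \Delta)\circ \Delta$ gives
$$\sum_{x \leq w \leq z} [x,w]\otimes \Delta[w,z] \ =\  \sum_{x \leq w \leq y \leq z} [x,w]\otimes [w,y]\otimes [y,z].$$
Both sums run over the same index set of chains $x \leq w \leq y \leq z$ in $X$, so they coincide. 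The equality relies on the transitivity of $\leq$, which guarantees that the two ways of parsing a four-term chain yield the same collection of triples of intervals.

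For the counit axioms, the term $\epsilon[x,y]\otimes [y,z]$ in $(\epsilon\otimes \mathrm{id})\Delta[x,z]$ vanishes unless $y = x$, so the sum collapses to $1 \otimes [x,z]$, which corresponds to $[x,z]$ under the identification $R \otimes \langle\mathbb{I}_X\rangle \simeq \langle\mathbb{I}_X\rangle$. The right counit is symmetric: only $y = z$ survives in $\sum [x,y]\otimes \epsilon[y,z]$, giving $[x,z]\otimes 1 \mapsto [x,z]$. Here antisymmetry of $\leq$ is used implicitly, ensuring that $x\leq y \leq x$ forces $y=x$ so that the counit truly selects a single term.

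I do not expect any real obstacle; the whole argument is essentially bookkeeping on chains in $X$, and the only substantive hypothesis being used is local finiteness (to ensure well-definedness) together with the poset axioms of reflexivity, antisymmetry, and transitivity (to match indices in the three computations above).
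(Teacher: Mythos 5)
Your proof is correct and follows essentially the same route as the paper: both verify coassociativity by identifying the two iterated coproducts with the sum over chains $x\leq w\leq y\leq z$, and both check the counit axioms by observing that only the degenerate endpoints survive. Your additional remarks on local finiteness and the poset axioms are accurate but the core computation is identical.
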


\begin{proof}
The counit property is clear, indeed we have that
$$(\epsilon \otimes 1)\Delta [x,y]\ = \ \epsilon[x,x] [x,y] \ = \ [x,y]
\ = \ [x,y]\epsilon [y,y] \ = \ (1 \otimes \epsilon)\Delta[x,y].$$  Coassociativity follows from the identities
$$ (\Delta \otimes 1)\Delta[x,w]\ = \ \sum_{x\leq y \leq z \leq w}[x,y]\otimes [y,z]\otimes [z,w]\ = \ (1\otimes \Delta)\Delta[x,w].$$
\end{proof}

Recall that if $C$ is a $R$-coalgebra, then the dual $R$-module $$C^* =  \mathrm{Hom}_R(C,R)$$ is an $R$-algebra with  product $\star :C^* \otimes C^* \longrightarrow C^* $ given
on $f,g \in C^*$ by
$$f\star g(c)\ = \ (f\otimes g)\Delta(c) .$$ The $\star$-unit of $C^*$ is the counit
 $\epsilon$ of $C$.\\

For the coalgebra $<\mathbb{I}_X>$ of intervals, the dual algebra $<\mathbb{I}_X>^*$ may be identified with the algebra $[\mathbb{I}_X, R]$ of maps from $\mathbb{I}_X$ to $R$ with the product given by
$$(f\star g)[x,z]\ = \ \sum_{x\leq y \leq z}f[x,y] g[y,z] .$$
The $R$-algebra $([\mathbb{I}_X, R], \star)$ is called the incidence algebra of $X$.\\

\begin{thm}\label{l1}{\em Let $(X, \leq)$ be a locally finite poset. A map $f\in [\mathbb{I}_X, R]$  is invertible in the incidence algebra if and only if $f[x,x]$ is a unit for all $x \in X.$  In particular,
the incidence map $\xi \in [\mathbb{I}_X, R]$ constantly equal to $1$ is invertible;  its inverse $\mu \in [\mathbb{I}_X, R]$ is called the
M$\ddot{\mbox{o}}$bius function of the poset $(X, \leq).$}
\end{thm}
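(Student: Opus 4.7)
The plan is to split the theorem into the two implications and handle the existence of the inverse by a well-founded recursion on the cardinality of intervals, which is legitimate because $X$ is locally finite.

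For the easy direction, suppose $f$ has a two-sided inverse $g$ in $[\mathbb{I}_X,R]$. Evaluating $f\star g = \epsilon$ on $[x,x]$, the sum defining the convolution collapses because the only $y\in X$ satisfying $x\leq y\leq x$ is $y=x$:
$$1 \ = \ \epsilon[x,x] \ = \ (f\star g)[x,x] \ = \ f[x,x]\,g[x,x].$$
Symmetrically $g[x,x]f[x,x]=1$, so $f[x,x]$ is a unit in $R$.

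For the converse, I would construct a right inverse $g\in[\mathbb{I}_X,R]$ for $f$ by induction on $n=|[x,z]|$. For $n=1$, forced to put $g[x,x]=f[x,x]^{-1}$, which exists by hypothesis. For $n>1$, assume $g[y,w]$ has been defined for every interval with $|[y,w]|<n$; in particular, for every $y$ with $x<y\leq z$ the interval $[y,z]$ is a proper subinterval of $[x,z]$ (since $x\notin[y,z]$), so $g[y,z]$ is already available. Solving $(f\star g)[x,z]=0$ for the unknown term $f[x,x]g[x,z]$ forces
$$g[x,z] \ = \ -\,f[x,x]^{-1}\!\!\sum_{x<y\leq z} f[x,y]\,g[y,z].$$
By construction $f\star g=\epsilon$. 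An entirely analogous recursion, isolating instead the term indexed by $y=z$ in $(h\star f)[x,z]$, produces a left inverse $h$ of $f$; then the standard one-line associativity argument $h=h\star(f\star g)=(h\star f)\star g=g$ shows $g$ is a two-sided inverse.

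For the final statement of the theorem, the incidence map $\xi$ satisfies $\xi[x,x]=1$, which is a unit in any ring with identity, so $\xi$ is invertible and its inverse is, by definition, the M\"obius function $\mu$. The main subtlety to be careful about in writing this up is simply justifying the well-foundedness of the recursion, i.e.\ pointing out that local finiteness of $X$ is precisely what makes $|[x,z]|$ a valid induction parameter; every other step is a routine unfolding of the convolution product $\star$.
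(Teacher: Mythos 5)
Your proof is correct and follows essentially the same route as the paper: the same evaluation at $[x,x]$ for the easy direction, and the same recursion $g[x,z]=-f[x,x]^{-1}\sum_{x<y\leq z}f[x,y]g[y,z]$, terminating by local finiteness, for the converse. The only differences are cosmetic: you are slightly more careful in checking that the right inverse is also a left inverse (the incidence algebra being noncommutative), while the paper additionally unwinds the recursion into an explicit closed-form sum over chains, which it needs for the corollary that follows but which is not required for the statement itself.
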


\begin{proof}

If $f\star g = \epsilon,$ then $f[x,x]g[x,x]=1$ and thus necessarily $f[x,x]$ is a unit for all $x \in X.$
Conversely, assume that $f[x,x]$ is a unit for $x\in X$, then its inverse $g$, if it exists,
must satisfy $g[x,x]=f[x,x]^{-1}\ $ for $\ x\in X$, and for $x< z\ $ in $X$ we have that
$$f[x,x]g[x,z]+\sum_ {x < y \leq z}f[x,y]g[y,z]\ = \ 0,$$ or equivalently
$$g[x,z]\ = \  -\sum_ {x < y \leq z}\frac{f[x,y]g[y,z]}{f[x,x]}.$$
The equations above give a terminating recursive definition for $g$  since  $X$ is locally finite.
Solving the recursion we get for $x< y$ in $X$ that $$g[x,y]\ = \ \sum_{n\geq 1}\ \sum_{x =x_0 < x_1 < ... < x_n=y}
(-1)^n\frac{f[x_0, x_1].................f[x_{n-1}, x_n]}{f[x_0,x_0]f[x_1,x_1].......f[x_{n-1},x_{n-1}]f[x_n,x_n]}$$
\end{proof}

\begin{cor}{\em  The M$\ddot{\mbox{o}}$bius function $\mu \in [\mathbb{I}_X, R]$ of a locally finite poset $(X, \leq)$ is given by
$\mu[x,x]=1\ $ for $\ x \in X$, and for $\ x < y \ $ in $X$ it is given by
$$\mu[x,y]\ = \ \sum_{n\geq 1}(-1)^n|\{x =x_0 < x_1 < .....<x_{n-1}< x_n=y\}|.$$
}
\end{cor}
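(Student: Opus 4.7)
The plan is to obtain the statement as a direct specialization of Theorem \ref{l1} applied to $f = \xi$. Since the theorem already produces both the recursive formula for $g = f^{-1}$ and a closed-form sum over strictly increasing chains, very little extra work is required; the corollary amounts to simplifying that sum when every value of $f$ is $1$.

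First I would handle the diagonal. The proof of Theorem \ref{l1} shows that the inverse $g$ of any invertible $f$ must satisfy $g[x,x] = f[x,x]^{-1}$. Applying this with $f = \xi$, which satisfies $\xi[x,x] = 1$, immediately gives $\mu[x,x] = 1$ for every $x \in X$.

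Next, for $x < y$ I would substitute $f = \xi$ into the closed-form expression
$$g[x,y] \ = \ \sum_{n \geq 1} \ \sum_{x = x_0 < x_1 < \cdots < x_n = y} (-1)^n \frac{f[x_0,x_1]\cdots f[x_{n-1},x_n]}{f[x_0,x_0]f[x_1,x_1]\cdots f[x_n,x_n]}$$
established in the proof of Theorem \ref{l1}. Because $\xi$ is constantly $1$, every factor in both numerator and denominator is $1$, so each summand reduces to $(-1)^n$. The inner sum therefore becomes $(-1)^n$ times the number of strictly increasing chains $x = x_0 < x_1 < \cdots < x_n = y$ of length $n$, which is precisely the formula asserted by the corollary. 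Local finiteness of $X$ ensures each such count is a finite nonnegative integer and that only finitely many $n$ contribute (the chains have length at most $|[x,y]| - 1$), so the sum is a well-defined element of $R$.

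The only point that requires slight care is justifying the passage from the recursion $g[x,z] = -\sum_{x < y \leq z} f[x,y]\,g[y,z]/f[x,x]$ to the closed form above; this was done inside the proof of Theorem \ref{l1} by iterating the recursion a finite number of times, and I would simply cite that. No real obstacle arises: the work has effectively been done in Theorem \ref{l1}, and the corollary is the clean combinatorial shadow of that general inversion formula in the simplest case $f \equiv 1$.
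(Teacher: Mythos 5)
Your proof is correct and is exactly the route the paper intends: the corollary is stated without proof precisely because it is the specialization $f=\xi$ of the closed-form expression for $g=f^{-1}$ derived inside the proof of Theorem \ref{l1}, under which every factor in numerator and denominator equals $1$ and each inner sum collapses to $(-1)^n$ times the number of chains of length $n$. Your remarks on the diagonal case and on local finiteness guaranteeing a finite sum are the right (and only) points needing mention.
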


\begin{prop}\label{ww}{\em Let $(X, \leq)$ and $(Y, \leq)$ be finite posets. Give $X\times Y$ the poset structure
$$(a_1,a_2) \leq (b_1,b_2)\ \ \  \mbox{if and only if} \ \ \ a_1 \leq b_1 \ \ \mbox{and} \ \ a_2 \leq b_2.$$
There is natural bijection $\mathbb{I}_{X\times Y} \longrightarrow \mathbb{I}_{X}\times \mathbb{I}_{ Y}$ sending $[(a_1,a_2), (b_1,b_2)]$ to $([a_1,b_1 ], [a_2, b_2])$ inducing an isomorphism
$$([\mathbb{I}_{X\times Y}, R], \star)  \ \simeq \ ([\mathbb{I}_{X}, R], \star) \otimes
 ([\mathbb{I}_{ Y}, R], \star).$$
Moreover, we have that $\ \xi_{X \times Y}[(a_1,a_2),(b_1,b_2)] \ = \ \xi_X [a_1,b_1]\xi_Y[a_2,b_2] \ $ and thus
$$\mu_{X \times Y}[(a_1,a_2),(b_1,b_2)] \ = \ \mu_X [a_1,b_1]\mu_Y[a_2,b_2].$$
}
\end{prop}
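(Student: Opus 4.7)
The plan is to prove the three assertions in sequence, each being essentially a bookkeeping check once the right identification is in place.

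First I would unpack the bijection on intervals. An element $(c_1,c_2)$ of $X \times Y$ lies in $[(a_1,a_2),(b_1,b_2)]$ exactly when $a_1 \leq c_1 \leq b_1$ and $a_2 \leq c_2 \leq b_2$, so as subsets we have a product decomposition $[(a_1,a_2),(b_1,b_2)] = [a_1,b_1] \times [a_2,b_2]$. This shows the map $\mathbb{I}_{X \times Y} \to \mathbb{I}_X \times \mathbb{I}_Y$ is well defined. The inverse sends $([a_1,b_1],[a_2,b_2])$ to $[(a_1,a_2),(b_1,b_2)]$, and checking both compositions are identities is immediate from the definition of the product order.

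Next I would set up the algebra isomorphism. Since $X$ and $Y$ are finite, $[\mathbb{I}_X,R] \otimes [\mathbb{I}_Y,R]$ requires no completion; elementary tensors $f \otimes g$ correspond to the function $(I,J) \mapsto f(I)g(J)$, and extending $R$-linearly gives an isomorphism of $R$-modules $[\mathbb{I}_X,R] \otimes [\mathbb{I}_Y,R] \simeq [\mathbb{I}_X \times \mathbb{I}_Y, R]$. Composing with the bijection of intervals above identifies this with $[\mathbb{I}_{X\times Y},R]$. To see this identification is compatible with convolution, I would compute
$$((f_1 \otimes f_2) \star (g_1 \otimes g_2))[(a_1,a_2),(b_1,b_2)] = \sum_{(a_1,a_2)\leq(c_1,c_2)\leq(b_1,b_2)} f_1[a_1,c_1]f_2[a_2,c_2]\, g_1[c_1,b_1]g_2[c_2,b_2],$$
and observe that the sum factors as a product of the two one-variable convolutions precisely because the product order decomposes coordinate-wise. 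This shows the map is a ring homomorphism; being a bijection, it is an isomorphism of $R$-algebras.

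For the final two assertions, the identity $\xi_{X \times Y}[(a_1,a_2),(b_1,b_2)] = 1 = \xi_X[a_1,b_1]\,\xi_Y[a_2,b_2]$ is immediate, so under the isomorphism $\xi_{X \times Y}$ corresponds to $\xi_X \otimes \xi_Y$. In any tensor product of $R$-algebras the inverse of an elementary tensor of units is the tensor of inverses, i.e.\ $(\xi_X \otimes \xi_Y)^{-1} = \mu_X \otimes \mu_Y$, so by uniqueness of inverses $\mu_{X\times Y}$ corresponds to $\mu_X \otimes \mu_Y$, giving the product formula.

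The only real obstacle I foresee is being pedantic about the passage from $[\mathbb{I}_X,R] \otimes [\mathbb{I}_Y,R]$ to a function algebra on $\mathbb{I}_X \times \mathbb{I}_Y$; this is cheap here because the posets are assumed finite (the statement explicitly says \emph{finite}, not merely locally finite), so no completed tensor product is needed. Everything else is a direct unravelling of definitions.
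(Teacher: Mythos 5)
Your proof is correct, and since the paper states Proposition \ref{ww} without supplying a proof, your argument is exactly the one the statement implicitly envisions: the coordinatewise decomposition $[(a_1,a_2),(b_1,b_2)]=[a_1,b_1]\times[a_2,b_2]$, the resulting factorization of the convolution sum, and uniqueness of inverses to pass from $\xi$ to $\mu$. Your remark that finiteness of $X$ and $Y$ is what lets you avoid a completed tensor product is also consistent with the paper's own Remark immediately following the proposition.
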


\begin{rem}{\em
For infinite posets we have a natural embedding of algebras
$$([\mathbb{I}_{X}, R], \star) \otimes
 ([\mathbb{I}_{ Y}, R], \star)   \ \longrightarrow  \ ([\mathbb{I}_{X\times Y}, R], \star),$$ which
 can be promoted to an isomorphisms by using a suitable completed version of the tensor product.
}
\end{rem}

Next exercises compute the M$\ddot{\mbox{o}}$bius function for a few families of posets. The reader may consult \cite{bender} for these and more examples.

\begin{exc}\label{ejf}{\em For $n\leq m$ in $\mathbb{N}$ show that the M$\ddot{\mbox{o}}$bius function of the interval $[n,m]=\{n,...,m \}$ is given by $\mu[n,n]=1, \ \mu[n,n+1]=-1, \ \mbox{and} \ \mu[n,m]=0,$ for $m \geq n + 2.$
}
\end{exc}

\begin{exc}{\em For a finite set $X$, let $(PX, \subseteq)$ be the set of subsets of $X$ ordered by inclusion. Show that the M$\ddot{\mbox{o}}$bius function of $(PX, \subseteq)$ is given for $A \subseteq B$ by
$$\mu[A,B] \ = \  (-1)^{|A \setminus B|}$$
}
\end{exc}

\begin{exc}{\em Given a finite set $X$, let $(\mathrm{Par} X, \leq)$ be set of partitions of $X$, and for
$\pi, \sigma \in  \mathrm{Par} X$ we set $\pi \leq \sigma $ if and only
if each block of $\pi$ is included in a block of $\sigma.$ Show that the M$\ddot{\mbox{o}}$bius function of $(\mathrm{Par} X, \leq)$ is given by
$$\mu[\pi, \sigma] \ = \  (-1)^{|\pi| - |\sigma| }\prod_{b \in \sigma}(n_{b}-1)!,$$
where $n_{b}$ is the number of blocks of $\pi$ included in $b.$ }
\end{exc}

Our next results require a few notions in combinatorial algebraic topology. Kozlov in $\cite{koz}$ provides a self-contained introduction
to the subject, and in particular discusses homology theory in Chapter 3.

\begin{defn}{\em
A simplicial complex $C$ consists of a set of vertices $X$ together with a family $C$ of finite non-empty subsets of $X$, called simplices, such that:
\begin{itemize}
\item $\{x\} \in C\ $ for all $\ x\in X.$

\item If $\ \emptyset \neq A \subseteq B \ $ and $\ B \in C$, then $A \in C.$
\end{itemize}
}
\end{defn}

To a finite poset $(X, \leq)$ we associate the simplicial complex $CX$ of linearly ordered subsets of $X$. It has vertex set $X$ and is such that $c \subseteq X$ is in $CX$ if and only if
the restriction of $\leq$ to $c$ is a linear, or total, order.\\

For $n \geq -1$, we let $C_nX$ be the set of elements in $CX$ of cardinality $n+1.$
By convention  we let the empty set be the unique element of $C_{-1}X$.\\

 The geometric realization of $CX$ is the topological space  $$|CX|\ = \ \left\{ a \in [X, [0,1]] \ \ \big{|} \ \ \mbox{support}(a) \in C \ \ \ \mbox{and}\ \ \ \sum_{x \in X}a(x)=1 \right\} ,$$
where

\begin{itemize}
\item $\mbox{support}(a)  =  \{x\in X \ | \  a(x)\neq 0  \}, \ $ and $\ [0,1]$ is the unit interval in $\mathbb{R},$

\item the space $[X, [0,1]] = \prod_{x\in X}[0,1]$ is given the product topology,

 \item $|CX| \subseteq [X, [0,1]] $ is given the subspace topology.
\end{itemize}

The reduced homology groups $\widetilde{\mathrm{H}}_n|CX|\ $ of $\ |CX|, \ $ for $n \geq -1$, may be identified with the homology groups of the differential
complex $(<CX>, d)$ such that  $$<CX> \ = \ \bigoplus_{n \geq -1} <C_nX>,$$ where  $<C_nX>$ is the $\mathbb{Z}$-module generated by  $C_nX$.
The differential map $$d:<C_nX> \ \ \longrightarrow \ \ <C_{n-1}X> $$ is given on generators by
$$d\{x_0 < .... < x_n\}\ = \ \sum_{i=0}^n(-1)^i\{x_0<...< \widehat{x_i} <...<x_n\}.$$
Therefore the reduced Euler characteristic of $|CX|$ is given by
$$\widetilde{\chi}|CX|\ = \ \sum_{n\geq -1}(-1)^n|C_nX|\ =\ \sum_{n \geq 0}(-1)^n\mathrm{rank}\widetilde{\mathrm{H}}_n|CX|.$$
The left identity holds by definition, the right identity is proven in \cite{koz}. We recall that the rank of an abelian group counts the number of generators of the free part of the group.\\

The Euler characteristic of $|CX|$ is defined in terms of the homology groups of the differential complex $$(<C_{\geq 0}X>, d) \subseteq (<CX>, d),$$ where
$$<C_{\geq 0}X> \ = \ \bigoplus_{n \geq 0} <C_nX>, \ \ \ \ \mbox{as follows}$$
$$\chi|CX|\ = \ \sum_{n\geq 0}(-1)^k|C_nX|\ = \ \sum_{n \geq 0}(-1)^n\mathrm{rank}\mathrm{H}_n(|CX|).$$

The following topological results are due to P. Hall.

\begin{thm}\label{t2}{\em \textbf{(Homological Interpretation of the M$\ddot{\mbox{o}}$bius Function)}\\
The M$\ddot{\mbox{o}}$bius function $\mu \in [\mathbb{I}_X, R]$ of a locally finite poset $(X, \leq)$ is given
for $x < y \in X$ by $$\mu[x,y] \ = \ \widetilde{\chi}|C(x,y)|\ = \ \sum_{n\geq -1}(-1)^n|C_n(x,y)|\ = \ \sum_{n \geq 0}(-1)^n\mathrm{rank}\widetilde{\mathrm{H}}_n|C(x,y)|,$$
where $(x,y)$ is the interval $\{ z \ | \ x < z< y \} \subseteq X$ with the induced order.}
\end{thm}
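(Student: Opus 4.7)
The plan is to combine the closed-form expression for $\mu[x,y]$ from the Corollary with an explicit bijection between chains in $[x,y]$ and simplices of the order complex $C(x,y)$ (including the empty simplex). The middle equality in the statement, $\widetilde{\chi}|C(x,y)| = \sum_{n \geq 0}(-1)^n \mathrm{rank}\,\widetilde{\mathrm{H}}_n|C(x,y)|$, is the standard Euler--Poincar\'e identity already cited from Kozlov's book, so the real content is the first equality $\mu[x,y] = \sum_{n \geq -1}(-1)^n |C_n(x,y)|$.

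First I would recall that by the Corollary we have
$$\mu[x,y] \ = \ \sum_{n\geq 1}(-1)^n\bigl|\{x =x_0 < x_1 < \cdots < x_n=y\}\bigr|.$$
Now I would set up the bijection. Given a strict chain $x = x_0 < x_1 < \cdots < x_n = y$ with $n \geq 1$, strip off the endpoints to obtain the subset $\{x_1, \ldots, x_{n-1}\}$ of the open interval $(x,y)$; this is a linearly ordered subset, hence a simplex in $C(x,y)$. Conversely, any linearly ordered subset $\{z_1 < \cdots < z_{n-1}\} \subseteq (x,y)$ (possibly empty) extends uniquely to a chain from $x$ to $y$ by prepending $x$ and appending $y$. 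This gives a bijection between chains of length $n$ (i.e.\ with $n+1$ elements) from $x$ to $y$ and simplices of $C(x,y)$ of cardinality $n-1$, i.e.\ elements of $C_{n-2}(x,y)$; note that $n=1$ (the chain $x < y$) corresponds to the empty simplex in $C_{-1}(x,y)$.

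Second I would reindex the sum. Setting $k = n-2$, the bijection yields
$$\mu[x,y] \ = \ \sum_{n \geq 1}(-1)^n |C_{n-2}(x,y)| \ = \ \sum_{k \geq -1}(-1)^{k+2}|C_k(x,y)| \ = \ \sum_{k \geq -1}(-1)^k|C_k(x,y)|,$$
which is precisely the definition of $\widetilde{\chi}|C(x,y)|$ given just before the theorem. Local finiteness of $X$ ensures $(x,y)$ is finite and all sums are finite.

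The main obstacle is bookkeeping the indexing conventions: one must keep track of the shift between ``length of chain'' (counting steps, starting at $n=1$) and ``dimension of simplex'' (cardinality minus one, starting at $-1$), and ensure that the single chain $x < y$ correctly accounts for the empty simplex contributing $(-1)^{-1} = -1$ to the reduced Euler characteristic. Once this shift is set up correctly, the two-step argument (bijection, then reindexing) closes the proof, with the homological identity being invoked as a black box.
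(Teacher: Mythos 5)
Your proposal is correct and follows exactly the paper's own argument: start from the chain-counting formula in the Corollary, identify a chain $x = x_0 < \cdots < x_n = y$ with the simplex $\{x_1, \ldots, x_{n-1}\} \in C_{n-2}(x,y)$, reindex, and cite the Euler--Poincar\'e identity for the homological equality. You simply spell out the bijection and the $n=1$/empty-simplex bookkeeping more explicitly than the paper does.
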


\begin{proof}
$$\mu[x,y]\ = \ \sum_{n\geq 1}(-1)^n|\{x =x_0 < x_1 < ...< x_n=y\}|\ =$$
$$ \sum_{n\geq 1}(-1)^{n-2}|C_{n-2}(x,y)|\ = \  \sum_{n\geq -1}(-1)^n|C_n(x,y)| \ = \ \widetilde{\chi}|C(x,y)| .$$
\end{proof}

Let $X$ be a finite poset, and $\overline{X}$ be the poset obtained from $X$ by adjoining a minimum $\widehat{0}$ and a maximum $\widehat{1}$  to $X$.

\begin{cor}{\em
Let $\mu_{\overline{X}} \in [\mathbb{I}_{\overline{X}} , R]$ be the M$\ddot{\mbox{o}}$bius function of $\overline{X}$, then
$$\mu_{\overline{X}}[\widehat{0},\widehat{1}]\ = \ \widetilde{\chi}|CX|\ = \ \sum_{n \geq -1}(-1)^n \mathrm{rank}\widetilde{\mathrm{H}}_n|CX|.$$
}
\end{cor}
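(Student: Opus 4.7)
The plan is to derive this as a direct application of Theorem \ref{t2}. The entire content of the corollary is the identification of the open interval $(\widehat{0},\widehat{1})$ in $\overline{X}$ with $X$ itself, so once this is noted the result follows by quoting the theorem.

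First I would unpack the definition of $\overline{X}$: by construction one has $\widehat{0} \leq x \leq \widehat{1}$ for all $x \in X$, and $\widehat{0}, \widehat{1} \notin X$. Consequently the open interval
\[
(\widehat{0}, \widehat{1}) \;=\; \{z \in \overline{X} \mid \widehat{0} < z < \widehat{1}\}
\]
is exactly $X$, and the partial order inherited from $\overline{X}$ on this set agrees with the original order of $X$. Therefore the simplicial complex of linearly ordered subsets satisfies $C(\widehat{0}, \widehat{1}) = CX$ as simplicial complexes, and hence $|C(\widehat{0}, \widehat{1})| = |CX|$ as topological spaces.

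Next I would simply apply Theorem \ref{t2} with $x = \widehat{0}$ and $y = \widehat{1}$ (noting $\widehat{0} < \widehat{1}$ since we may assume $X$ nonempty; if $X = \emptyset$ the interval $[\widehat{0},\widehat{1}]$ has only two elements and the formula $\mu_{\overline{X}}[\widehat{0},\widehat{1}] = -1 = \widetilde{\chi}(\emptyset)$ is still the reduced Euler characteristic of the empty complex). This gives
\[
\mu_{\overline{X}}[\widehat{0},\widehat{1}] \;=\; \widetilde{\chi}|C(\widehat{0},\widehat{1})| \;=\; \widetilde{\chi}|CX|,
\]
which is the first equality. The second equality,
\[
\widetilde{\chi}|CX| \;=\; \sum_{n \geq -1}(-1)^n \mathrm{rank}\,\widetilde{\mathrm{H}}_n|CX|,
\]
is the definition of the reduced Euler characteristic recalled in the paragraph preceding Theorem \ref{t2}, applied to the complex $CX$.

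There is essentially no obstacle: the only point worth being careful about is the empty case and ensuring one uses the reduced (rather than ordinary) Euler characteristic on the right, which is precisely what Theorem \ref{t2} supplies. No further computation is needed.
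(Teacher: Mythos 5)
Your proposal is correct and follows exactly the paper's own route: the paper's proof is the one-line observation that $(\widehat{0},\widehat{1})=X$ followed by an appeal to Theorem \ref{t2}. Your additional remarks on the empty case and on the definition of the reduced Euler characteristic are sound elaborations of the same argument.
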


\begin{proof}
Follows from Theorem \ref{t2} since $(\widehat{0},\widehat{1})= X$.

\end{proof}

\begin{prop}{\em The Euler characteristic $\chi|CX|$ of a finite poset $(X, \leq)$ is given by
$$\chi|CX|\ = \ \sum_{x,y \in X}\mu[x,y].$$
}
\end{prop}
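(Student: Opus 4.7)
The plan is to unpack both sides as alternating sums indexed by chain length in $X$ and show the terms match up bijectively. By the corollary following Theorem \ref{l1}, for any pair $x \leq y$ in $X$ we have $\mu[x,x] = 1$ and
$$\mu[x,y] \ = \ \sum_{n \geq 1}(-1)^n \bigl|\{x = x_0 < x_1 < \cdots < x_n = y\}\bigr| \ \ \ \ \mbox{for } x < y.$$
The right-hand side of the identity to be proved is to be read as a sum over pairs with $x \leq y$ (since $\mu$ is defined only on $\mathbb{I}_X$).

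First I would split the sum $\sum_{x,y}\mu[x,y]$ into the diagonal part $\sum_{x \in X}\mu[x,x] = |X| = |C_0 X|$ and the off-diagonal part $\sum_{x < y}\mu[x,y]$. Next I would substitute the chain expansion of $\mu[x,y]$ into the off-diagonal sum and interchange the finite sums, obtaining
$$\sum_{x < y}\mu[x,y] \ = \ \sum_{n \geq 1}(-1)^n \sum_{x < y} \bigl|\{x = x_0 < x_1 < \cdots < x_n = y\}\bigr|.$$
The key observation is then that every chain $x_0 < x_1 < \cdots < x_n$ in $C_n X$ with $n \geq 1$ is counted exactly once on the right, with endpoints $x = x_0$ and $y = x_n$ uniquely determined by the chain; hence the inner sum equals $|C_n X|$.

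Combining, $\sum_{x,y}\mu[x,y] = |C_0 X| + \sum_{n \geq 1}(-1)^n |C_n X| = \sum_{n \geq 0}(-1)^n |C_n X|$, which by definition is $\chi|CX|$. There is no genuine obstacle here since $X$ is finite so all sums involved are finite and the reindexing is elementary; the only point requiring attention is the bookkeeping on chain cardinalities versus the index $n$ in $C_n X$ (chains of cardinality $n+1$), and the convention that pairs with $x \not\leq y$ contribute zero to the left-hand side.
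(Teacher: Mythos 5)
Your proof is correct and is essentially the paper's argument read in the opposite direction: the paper starts from $\chi|CX|$ and splits $C_nX$ ($n\geq 1$) as $\bigsqcup_{x<y}C_{n-2}(x,y)$ by stripping the endpoints of each chain, then identifies the inner alternating sum with $\mu[x,y]$ via Hall's theorem, whereas you expand $\mu[x,y]$ by the chain-counting corollary and reassemble the chains into $C_nX$ — the same endpoint bijection either way. The bookkeeping on the diagonal term $|C_0X|=\sum_x\mu[x,x]$ and on chain cardinalities matches the paper's.
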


\begin{proof}
The result follows from the identity $$|C_0X|\ = \ |X| \ = \ \sum_{x\in X}\mu[x,x],$$ and the fact that for $n\geq 1$ we have the identity
$$C_{n}X \ = \ \bigsqcup_{x<y} C_{n-2}(x,y).$$
Thus
$$\chi|CX|\ = \ \sum_{n\geq 0}(-1)^n|C_nX|\ = \ |C_0X| \ + \ \sum_{n\geq 1}(-1)^n|C_nX|\ = $$
$$\sum_{x\in X}\mu[x,x]\ + \ \sum_{n\geq 1}\left(\sum_{x < y}(-1)^n|C_{n-2}(x,y)|\right)\  =$$
$$\sum_{x\in X}\mu[x,x]\ + \ \sum_{x < y}\left(\sum_{n\geq 1}(-1)^{n-2}|C_{n-2}(x,y)|\right) \ =$$
$$\sum_{x\in X}\mu[x,x]\ + \ \sum_{x < y}\mu[x,y]\ = \ \sum_{x,y \in X}\mu[x,y].$$
\end{proof}

For $a\in X$, we set $X_{\geq a} =\{x \in X \ | \ x\geq a \}$.\\

 There is a structure of  right $[\mathbb{I}_X,R]$-module on
 $[X_{\geq a},R]$  via the $R$-bilinear map $$\star: [X_{\geq a},R]\times [\mathbb{I}_X,R] \longrightarrow [X_{\geq a},R]$$ sending a pair $(f,g) \in [X_{\geq a},R]\times [\mathbb{I}_X,R]$ to the map
$f\star g \in [X_{\geq a},R]$ given by $$f\star g(y)\ = \ \sum_{a\leq x\leq y}f(x)g[x,y] .$$

\begin{thm}{\em \textbf{(M$\ddot{\mbox{o}}$bius Inversion for Locally Finite Posets)}\\
Given $a\in X$ and $f, g \in [X_{\geq a},R],$ we have that $f=g\star \xi$ if and only if $g = f\star \mu,$ or equivalently
$$f(y)\ = \ \sum_{a\leq x \leq y}g(x) \ \ \ \ \mbox{if and only if}\ \ \ \ g(y)\ = \ \sum_{a\leq x \leq y}f(x)\mu[x,y].$$
}

\end{thm}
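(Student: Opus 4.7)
The plan is to leverage the right module structure on $[X_{\geq a}, R]$ over the incidence algebra $([\mathbb{I}_X, R], \star)$ and then simply multiply by the two-sided inverse of $\xi$ on both sides of the equations. By Theorem \ref{l1}, $\mu$ is the $\star$-inverse of $\xi$, so $\mu \star \xi = \xi \star \mu = \epsilon$ in $[\mathbb{I}_X, R]$. Once the module axioms are in place, the theorem becomes purely formal.

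First I would verify that the $R$-bilinear map $\star: [X_{\geq a}, R] \times [\mathbb{I}_X, R] \longrightarrow [X_{\geq a}, R]$ really endows $[X_{\geq a}, R]$ with a right $[\mathbb{I}_X, R]$-module structure. The unit axiom $f \star \epsilon = f$ is immediate from $\epsilon[x,y] = \delta_{x,y}$. Associativity $(f \star g) \star h = f \star (g \star h)$, for $f \in [X_{\geq a}, R]$ and $g, h \in [\mathbb{I}_X, R]$, amounts to the identity
$$\sum_{a \leq y \leq z}\left(\sum_{a\leq x \leq y} f(x)g[x,y]\right)h[y,z] \ = \ \sum_{a \leq x \leq z}f(x)\left(\sum_{x \leq y \leq z} g[x,y]h[y,z]\right),$$
and both sides are indexed by triples $(x,y,z)$ with $a \leq x \leq y \leq z$, so they coincide by Fubini.

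With associativity and the unit in hand, the equivalence is now routine. Assuming $f = g \star \xi$ and multiplying on the right by $\mu$ yields $f \star \mu = (g \star \xi) \star \mu = g \star (\xi \star \mu) = g \star \epsilon = g$. Conversely, assuming $g = f \star \mu$ and multiplying on the right by $\xi$ yields $g \star \xi = (f \star \mu) \star \xi = f \star (\mu \star \xi) = f \star \epsilon = f$. Unwinding the definitions in components, $g \star \xi(y) = \sum_{a \leq x \leq y} g(x)\xi[x,y] = \sum_{a \leq x \leq y} g(x)$ and $f \star \mu(y) = \sum_{a \leq x \leq y} f(x)\mu[x,y]$, recovering the second form of the statement.

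The only genuine content is the associativity calculation above, which is the standard verification that restricting one argument of the convolution to a principal up-set still produces a module structure. I do not anticipate any real obstacle: the work was already done when we built the incidence algebra, and this theorem is essentially the ``inversion identity is multiplication by the inverse'' tautology, specialized to the module $[X_{\geq a}, R]$.
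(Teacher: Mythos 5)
Your proof is correct and is exactly the argument the paper intends: the paper sets up the right $[\mathbb{I}_X,R]$-module structure on $[X_{\geq a},R]$ immediately before the theorem and leaves the formal "multiply by the two-sided inverse $\mu$ of $\xi$" step unstated. Your verification of the unit and associativity axioms (with the Fubini rearrangement over triples $a\leq x\leq y\leq z$, justified by local finiteness) fills in the only nontrivial content, and the component-wise unwinding matches the second form of the statement.
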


\begin{rem}{\em If $X$ is a finite set, then $[X,R]$ is a right $[\mathbb{I}_X,R]$-module and we have for $f, g \in [X, R]$ that:
$$f(y)\ = \ \sum_{x \leq y}g(x) \ \ \ \ \mbox{if and only if}\ \ \ \ g(y)\ = \ \sum_{ x \leq y}f(x)\mu[x,y].$$
}
\end{rem}

Fix a finite poset $(X, \leq)$. The set of maps $[X\times X, R] $ is naturally an $R$-algebra (square matrices indexed by $X$) with the product
$$fg(x,z)\ = \ \sum_{y \in X}f(x,y)g(y,z).$$ There is a natural embedding $[\mathbb{I}_X, R] \longrightarrow [X\times X, R] $ of algebras sending
$f\in [\mathbb{I}_X, R]$ to the map $f: X\times X \longrightarrow R$ given by
$$f(x,y)\ = \ \left\{\begin{array}{cc}
f[x,y] & \ \mathrm{if} \ x\leq y, \ \  \\

0 & \  \mathrm{otherwise}.
\end{array}\right.$$
Thus we may regard $[\mathbb{I}_X, R]$ as a subalgebra of $[X\times X, R].$ \\

A map $f \in [X\times X, R]$ is called transitive if for  $n \geq 0$, and
$x_0, ..., x_n \in X$ we have:
$$f(x_0, x_n) =0\ \ \ \ \ \mbox{implies that} \ \ \ \ \ f(x_0,x_1)f(x_1,x_2)...f(x_{n-1}, x_n)=0.$$

Leinster has shown in \cite{le, le2} the following results.

\begin{lem}\label{l2} {\em Let $f \in [X\times X, R]$ be an invertible and transitive map, then
$$f(x,y)=0\ \ \ \ \  \mbox{implies that} \ \ \ \ \ f^{-1}(x,y)=0.$$
}
\end{lem}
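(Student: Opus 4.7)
The plan is to prove the contrapositive via Cramer's rule and a cycle-chasing argument in the symmetric group, using transitivity in its contrapositive form: if a chain-product $f(x_0,x_1)\cdots f(x_{n-1},x_n)$ is nonzero then $f(x_0,x_n) \neq 0$. The only ring-theoretic fact I need is that in a commutative ring a nonzero product cannot have a zero sub-product, since any zero factor would kill the full product.

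First I verify the preliminary fact that $f(x, x) \neq 0$ for every $x \in X$: because $\det(f)$ is a unit, its Leibniz expansion $\sum_\sigma \mathrm{sgn}(\sigma)\prod_z f(z, \sigma(z))$ has some permutation $\sigma$ with $\prod_z f(z, \sigma(z)) \neq 0$, so each factor is nonzero. If $f(x, x) = 0$, then $\sigma(x) \neq x$ and the $\sigma$-cycle through $x$, of length $k \geq 2$, has a chain-product that is a nonzero sub-product of the full product, whence by the contrapositive of transitivity $f(x, x) \neq 0$, a contradiction. Thus $f(x, y) = 0$ forces $x \neq y$.

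Now I assume $f(x, y) = 0$ with $x \neq y$ and, for contradiction, $g(x, y) \neq 0$ where $g = f^{-1}$. Cramer's rule gives $g(x, y) = \pm \det(f)^{-1}\det(f_{\widehat{y}, \widehat{x}})$, where $f_{\widehat{y}, \widehat{x}}$ is the submatrix of $f$ with row $y$ and column $x$ removed; so this cofactor determinant is nonzero, and its Leibniz expansion yields a bijection $\sigma : X \setminus \{y\} \to X \setminus \{x\}$ with $\prod_{z \neq y} f(z, \sigma(z)) \neq 0$, each factor nonzero. I extend $\sigma$ to a permutation $\pi$ of $X$ by $\pi(y) = x$; since $\pi^{-1}(x) = y \neq x$, the $\pi$-cycle through $x$ has length $k \geq 2$ and contains $y$, so I can write it as $x = z_0, z_1, \ldots, z_{k-1} = y$. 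For $0 \leq i \leq k - 2$ we have $z_i \neq y$, so $f(z_i, z_{i+1}) = f(z_i, \sigma(z_i))$ is a factor of the nonzero product $\prod_{z \neq y} f(z, \sigma(z))$, and hence the sub-product $\prod_{i=0}^{k-2} f(z_i, z_{i+1})$ is again nonzero. Applying the contrapositive of transitivity to the chain $z_0, \ldots, z_{k-1}$ yields $f(x, y) = f(z_0, z_{k-1}) \neq 0$, the desired contradiction. The main point to handle with care is the cycle bookkeeping — making sure the intermediate vertices stay in $X \setminus \{y\}$ so that the relevant $f$-values are really governed by $\sigma$ — together with the consistent use of transitivity in product form, which is what makes the argument valid over arbitrary commutative rings rather than just integral domains.
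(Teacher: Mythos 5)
The paper gives no proof of this lemma at all --- it is stated as a result of Leinster with a citation to \cite{le, le2} --- so there is nothing internal to compare against; your argument is correct and is essentially the standard one from Leinster's paper (there run in the direct rather than contrapositive direction, showing each Leibniz term of the cofactor vanishes). Your reduction via Cramer's rule to a nonzero term $\prod_{z\neq y} f(z,\sigma(z))$ of $\det(f_{\widehat{y},\widehat{x}})$, the extension of $\sigma$ to a permutation $\pi$ with $\pi(y)=x$, and the application of transitivity along the $\pi$-cycle $x=z_0,\dots,z_{k-1}=y$ all check out, and the preliminary observation that $f(x,x)\neq 0$ correctly disposes of the diagonal case so that $k\geq 2$ is guaranteed.
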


\begin{thm}\label{tl1}{\em Let $f \in [\mathbb{I}_X, R] \subseteq [X\times X, R]$ be a transitive map, then $f$ is invertible in $[\mathbb{I}_X, R]$
if and only if $f$ is invertible in $[X\times X, R].$
}
\end{thm}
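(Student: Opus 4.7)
The plan is to exploit two features of the embedding $\iota : [\mathbb{I}_X, R] \hookrightarrow [X \times X, R]$ set up just before the theorem. First, $\iota$ is an injective homomorphism of $R$-algebras; the incidence product on the left and the matrix product on the right agree because a summand $f[x,y]\, g[y,z]$ can only be nonzero when $x \leq y \leq z$, which forces $x \leq z$. Second, $\iota$ sends the $\star$-unit $\epsilon$ to the identity matrix of $[X \times X, R]$, since $\epsilon[x,x] = 1$ and $\epsilon[x,y] = 0$ for $x < y$.

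Given these two structural facts, the forward implication is almost automatic: any $\star$-inverse $g$ of $f$ pushes forward under $\iota$ to a two-sided matrix inverse. Transitivity is not needed in this direction.

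For the converse, which is the real content, the plan is to set $h = f^{-1} \in [X \times X, R]$ and to use transitivity at exactly one point. The matrix $f$ vanishes on every pair $(x,y)$ with $x \not\leq y$, so transitivity of $f$ is precisely the hypothesis of Lemma \ref{l2}; applying that lemma forces $h(x,y) = 0$ for all such pairs. Hence $h$ lies in the image of $\iota$, say $h = \iota(g)$ with $g \in [\mathbb{I}_X, R]$. Pulling the matrix identities $\iota(f)\, \iota(g) = \mathrm{Id} = \iota(g)\, \iota(f)$ back through the injective algebra homomorphism $\iota$ yields $f \star g = \epsilon = g \star f$, so $g$ is the desired $\star$-inverse of $f$ in the incidence algebra.

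The main obstacle is precisely the step that invokes Lemma \ref{l2}; this is where transitivity is essential. Without it, the matrix inverse of an element of the incidence subalgebra has no reason to stay supported on $\mathbb{I}_X$, and the converse fails. The rest of the argument is just bookkeeping about the embedding $\iota$.
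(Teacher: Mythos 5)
Your proof is correct. The paper states Theorem \ref{tl1} without proof, attributing it to Leinster, and your argument is precisely the intended one: the embedding $\iota:[\mathbb{I}_X,R]\hookrightarrow [X\times X,R]$ is a unital algebra homomorphism, so the forward direction is formal, and the converse reduces entirely to Lemma \ref{l2}, which forces the matrix inverse of a transitive $f$ supported on $\mathbb{I}_X$ to be supported on $\mathbb{I}_X$ as well.
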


So far we have regarded $\mathbb{I}_X$ as a set, yet $\mathbb{I}_X$ may be naturally regarded as a full
subcategory of  the category of finite posets and increasing maps. Thus we let  $\underline{\mathbb{I}_X}$ be the set of isomorphism classes of intervals in $X$. The coproduct
and counit on $<\mathbb{I}_X>$ descent to a coproduct and counit on $<\underline{\mathbb{I}_X}>$, giving an algebra structure $\star$ to $$<\underline{\mathbb{I}_X}>^* \ \ = \ \ [\underline{\mathbb{I}_X}, R],$$ the set of maps from $\underline{\mathbb{I}_X}$ to $R$, or equivalently, the set
of maps from $\mathbb{I}_X$ to $R$ invariant under isomorphisms. We call $([\underline{\mathbb{I}_X}, R], \star)$ the reduced incidence algebra of $(X, \leq)$.

\begin{exmp}{\em Let $(\mathbb{N}_+, |)$ be the poset of positive natural numbers with the order
$$n|m \ \ \ \ \ \mbox{if and only if}\ \ \ \ \  n \ \ \mbox{divides} \ \ m.$$ The reduced
incidence algebra $([\underline{\mathbb{I}}, R], \star)$ is isomorphic to the Dirichlet algebra $\mathbb{D}_R$ via the map sending
$f\in [\underline{\mathbb{I}}, R]$ to
$$D_f =  \sum_{n \in \mathbb{N}_+} \frac{f\overline{[1,n]}}{n^s}.$$ Indeed, since $[d,n] \ \simeq \ [1,n/d]$ whenever $d|n$, we have for $f,g \in [\underline{\mathbb{I}}, R]$ that
$$D_{f\star g}\ = \ \sum_{n \in \mathbb{N}_+} \frac{f\star g\overline{[1,n]}}{n^s}\ = \ \sum_{n \in \mathbb{N}_+}\left( \sum_{d|n}f\overline{[1,d]}g\overline{[d,n]} \right)\frac{1}{n^s} \ =$$
$$\sum_{n \in \mathbb{N}_+}\left( \sum_{d|n}f\overline{[1,d]}g\overline{[1,n/d]} \right)\frac{1}{n^s}\ = \  D_fD_g.$$
}
\end{exmp}

\begin{exmp}{\em  The reduced incidence algebra of the poset $(\mathbb{N}, \leq)$ is isomorphic to $R[[x]]$ via the map sending
$f\in [\underline{\mathbb{I}}, R]$ to $$\widehat{f}=\sum_{n=0}^{\infty}f\overline{[0,n]}x^n.$$ Indeed, since $[k,n] \simeq [0, n-k]$ for $k \leq n$, we have for $f, g \in [\underline{\mathbb{I}}, R]$ that:
$$\widehat{f\star g}\ = \ \sum_{n=0}^{\infty}(f\star g)\overline{[0,n]}x^n = \sum_{n=0}^{\infty}\left( \sum_{k \leq n}f\overline{[0,k]}g\overline{[0, n-k]}\right)x^n\ = \ \widehat{f}\widehat{g}.$$
}
\end{exmp}

\begin{exmp}{\em  The reduced incidence algebra of  the poset $(\mathrm{P}_f\mathbb{N}, \subseteq)$ of finite subsets of $\mathbb{N}$ ordered by inclusion is isomorphic to the divided powers algebra $$R<<x_0,x_1,...,\frac{x_n}{n!},...>>,$$ with product given by $$\frac{x_n}{n!}\frac{x_m}{m!} = {n+m \choose n}\frac{x^{n+m}}{(n+m)!},$$ via the map sending
$f\in [\underline{\mathbb{I}}, R]$ to $$\widehat{f}=\sum_{n=0}^{\infty}f\overline{[\emptyset,[n]]}\frac{x^n}{n!}.$$ Indeed, since $[a,b]\ \simeq \ [\emptyset, b\setminus a]$, we have for $f,g\in [\underline{\mathbb{I}}, R]$ that:
$$\widehat{f\star g}\ = \ \sum_{n=0}^{\infty}(f\star g)\overline{[\emptyset,[n]]}x^n\ = \
\sum_{n=0}^{\infty}\left( \sum_{a \subseteq[n]}f\overline{[\emptyset,a]}g\overline{[\emptyset, [n]\setminus a]}\right)\frac{x^n}{n!} \ = $$
$$\sum_{n=0}^{\infty}\left( \sum_{k=0}^{n}{n \choose k}f\overline{[\emptyset,[k]]}g\overline{[\emptyset, [n-k]]}\right)\frac{x^n}{n!}\ =  \ \widehat{f}\widehat{g}.$$
}
\end{exmp}

Next we show that the M$\ddot{\mbox{o}}$bius function admits a Hopf theoretical interpretation. 
This connection has been developed by Shmitt \cite{sch} in a remarkable series of papers.
For more on Hopf algebras the reader may consult \cite{cart1, kass}.

\begin{thm}\label{t1}
{\em For a locally finite poset $(X, \leq)$ we let $R[[x_{[a,b]}]]$ be the $R$-algebra of formal power series in the variables $x_{[a,b]}$ with
$a < b$ in $X$, i.e. one variable for each interval in $X$ with different endpoints. The structural maps given below turn $R[[x_{[a,b]}]]$ into a Hopf algebra such that
for $a < b$ in $X$  we have   $$\mu_X[a,b]=Sx_{[a,b]}(1).$$
}
\end{thm}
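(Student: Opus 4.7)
My plan is to make explicit the Hopf algebra structure on $A := R[[x_{[a,b]}]]$ that the statement leaves unspecified, and then match the recursion that the antipode axiom forces on generators with the recursion satisfied by $\mu_X$ derived in the proof of Theorem \ref{l1}.

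First I would fix the convention $x_{[a,a]} = 1$ and define, on generators,
\[ \Delta x_{[a,b]} \ = \ \sum_{a \leq c \leq b} x_{[a,c]} \otimes x_{[c,b]}, \qquad \epsilon(x_{[a,b]}) \ = \ \delta_{a,b}, \]
extending both to continuous algebra homomorphisms (with $\Delta$ valued in a suitably completed tensor product $A \hat{\otimes}\, A$). Coassociativity and the counit identity on generators are exactly the two computations already performed for $<\mathbb{I}_X>$ in the earlier lemma; since $\Delta$ and $\epsilon$ are algebra maps, these axioms extend to all of $A$ and make $A$ a commutative bialgebra.

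Next I would construct the antipode by enforcing $m(S \otimes \mathrm{id})\Delta = \eta\epsilon$. Applied to $x_{[a,b]}$ with $a < b$, isolating the $c=b$ contribution gives the recursion
\[ S x_{[a,b]} \ = \ -\sum_{a \leq c < b} S(x_{[a,c]})\, x_{[c,b]}, \qquad S(1) = 1. \]
Because $X$ is locally finite, each $[a,b]$ is a finite poset, and the right-hand side involves only $Sx_{[a,c]}$ indexed by intervals strictly contained in $[a,b]$; thus induction on $|[a,b]|$ defines $Sx_{[a,b]}$ as a genuine polynomial in finitely many of the variables. Extend $S$ to $A$ as a continuous anti-homomorphism; the opposite identity $m(\mathrm{id} \otimes S)\Delta = \eta\epsilon$ then follows from the standard fact that in a bialgebra a convolution-left-inverse of the identity is automatically a two-sided inverse.

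Finally, the substitution $\mathrm{ev}\colon A \to R$ sending every $x_{[a,b]}$ to $1$ is defined unambiguously on each polynomial $Sx_{[a,b]}$, and applying it to the antipode recursion collapses it to
\[ \mathrm{ev}(Sx_{[a,b]}) \ = \ -\sum_{a \leq c < b}\mathrm{ev}(Sx_{[a,c]}), \qquad \mathrm{ev}(S(1)) = 1, \]
which is exactly the recursion $\mu_X[a,b] = -\sum_{a \leq c < b}\mu_X[a,c]$ obtained from $\mu \star \xi = \epsilon$ in Theorem \ref{l1}, with initial condition $\mu_X[a,a] = 1$. Induction on $|[a,b]|$ then yields $Sx_{[a,b]}(1) = \mu_X[a,b]$, as claimed. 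The main technical delicacy is setting up the completed tensor product for infinite $X$ and checking the continuity of $S$; both are handled by the general theory of connected filtered bialgebras, the filtration here being by the ideal generated by $\{x_{[a,b]} : a < b\}$, in which a unique antipode always exists.
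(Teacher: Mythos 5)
Your proposal is correct and follows essentially the same route as the paper: your coproduct and counit agree with the paper's once the convention $x_{[a,a]}=1$ is unwound, and your recursion $Sx_{[a,b]} = -\sum_{a\le c<b}S(x_{[a,c]})\,x_{[c,b]}$ solves to exactly the closed-form antipode the paper writes down, namely $Sx_{[a,b]}=\sum_{n\ge 1}\sum_{a=a_0<\cdots<a_n=b}(-1)^n x_{[a_0,a_1]}\cdots x_{[a_{n-1},a_n]}$. The paper merely lists the structural maps and leaves the verification of the Hopf axioms and of $\mu_X[a,b]=Sx_{[a,b]}(1)$ implicit, whereas you supply these by matching recursions against $\mu\star\xi=\epsilon$; the mathematical content is the same.
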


\begin{proof}
The counit, coproduct, and antipode on $R[[x_{[a,b]}]]$ are given, respectively, on generators by
$$\epsilon 1=1 \ \ \ \ \mbox{and}\ \ \ \ \epsilon x_{[a,b]}= 0,$$
$$\Delta 1=1\otimes 1 \ \ \ \ \mbox{and} \ \ \ \   \Delta x_{[a,b]} \ = \ 1\otimes x_{[a,b]} + \sum_{a< c< b}x_{[a,c]}\otimes x_{[c,b]} + x_{[a,b]}\otimes 1 ,$$
$$S 1 =1 \ \ \ \ \mbox{and}\ \ \ \ S x_{[a,b]}\ = \ \sum_{n\geq 1}\sum_{\{a =a_0 < a_1 < ...< a_n=b\}}(-1)^nx_{[a_0, a_1]}....x_{[a_{n-1}, a_n]}.$$

\end{proof}

\begin{cor}
{\em For a locally finite poset $(X, \leq)$ we let $R[[x_{\overline{[a,b]}}]]$ be the $R$-algebra of formal power series in the variables $\ x_{\overline{[a,b]}}\ $ with $\ \overline{[a,b]} \in \underline{\mathbb{I}} \ $ and $\ a < b$, i.e. one variable for each isomorphism class of intervals in $X$ with different endpoints.  The structural maps  from Theorem \ref{t1} induce structural maps on  $R[[x_{\overline{[a,b]}}]]$ turning it into a Hopf algebra such that for $a < b$ in $X$ we have: $$\mu_X\overline{[a,b]} \ =\ Sx_{\overline{[a,b]}}(1).$$}
\end{cor}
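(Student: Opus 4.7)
The plan is to show two things: first, that the counit, coproduct, and antipode defined on $R[[x_{[a,b]}]]$ in Theorem \ref{t1} descend along the quotient identifying $x_{[a,b]}$ with $x_{[a',b']}$ whenever $[a,b] \simeq [a',b']$ as posets; and second, that evaluating the descended antipode at $1$ reproduces $\mu_X\overline{[a,b]}$. The Hopf axioms then transport for free from the quotient map, so no separate verification is required.

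For the descent, the essential observation is that the defining formulas are compatible with poset isomorphism term by term. Fix an isomorphism $\phi\colon [a,b] \to [a',b']$. For the counit there is nothing to check since $\epsilon x_{[a,b]} = 0 = \epsilon x_{[a',b']}$. For the coproduct, $\phi$ restricts to a bijection between $\{c : a < c < b\}$ and $\{c' : a' < c' < b'\}$, and for each such $c$ the restrictions give isomorphisms $[a,c] \simeq [a',\phi(c)]$ and $[c,b] \simeq [\phi(c),b']$. Hence the two formulas for $\Delta x_{[a,b]}$ and $\Delta x_{[a',b']}$ agree after projecting to $R[[x_{\overline{[a,b]}}]] \otimes R[[x_{\overline{[a,b]}}]]$. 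For the antipode, the same $\phi$ induces a bijection between maximal-length chains $a = a_0 < \cdots < a_n = b$ and chains $a' = a'_0 < \cdots < a'_n = b'$ via $a'_i = \phi(a_i)$, with $[a_{i-1},a_i] \simeq [a'_{i-1},a'_i]$ for each $i$. So the formula for $Sx_{[a,b]}$ descends to a well-defined element of $R[[x_{\overline{[a,b]}}]]$ equal to the analogous sum indexed by isomorphism classes.

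For the identification with $\mu_X$, the corollary following Theorem \ref{l1} gives
\[
\mu_X[a,b] \;=\; \sum_{n\geq 1}(-1)^n\bigl|\{a=a_0<a_1<\cdots<a_n=b\}\bigr|.
\]
Substituting $x_{\overline{[a_{i-1},a_i]}} = 1$ into the descended formula for $Sx_{\overline{[a,b]}}$ gives precisely this alternating count of chains from $a$ to $b$, so $Sx_{\overline{[a,b]}}(1) = \mu_X[a,b] = \mu_X\overline{[a,b]}$, where the last equality is the standard fact that the M\"obius function depends only on the isomorphism class of the interval (visible from the same chain formula).

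The only potentially delicate point is bookkeeping: one must be sure that distinct chains in $[a,b]$ can map under $\phi$ to the same tuple of isomorphism classes, but that is harmless because the antipode formula is a sum over actual chains, not over isomorphism classes of chains, and the descent only uses that the summands become equal after passing to the quotient. Once this is noted, the corollary reduces to the two clean verifications above.
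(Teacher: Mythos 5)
Your proposal is correct and matches the paper's (implicit) argument: the paper gives no separate proof of this corollary, asserting only that the structural maps of Theorem \ref{t1} ``induce'' maps on $R[[x_{\overline{[a,b]}}]]$, and your descent verification --- checking term-by-term compatibility of $\epsilon$, $\Delta$, and $S$ with interval isomorphisms via the induced bijections on intermediate points and on chains --- is exactly the intended justification. The evaluation $Sx_{\overline{[a,b]}}(1)=\mu_X\overline{[a,b]}$ via the chain-counting formula for $\mu_X$ is likewise the same computation underlying Theorem \ref{t1}.
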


Next we state and prove a discrete analogue of the Gauss-Bonnet theorem for finite posets,
and discuss how this construction relates to the  M$\ddot{\mbox{o}}$bius function. Similar results for graphs have been developed by Knill \cite{knil}. We recall that the Gauss-Bonnet theorem
describes the Euler characteristic of a compact smooth manifold $M$ as the integral of a top differential form on $M$. \\

We have already defined the space of $\mathbb{Z}$-linear $n$-chains on a finite poset $(X, \leq)$ as the
free $\mathbb{Z}$-module $<C_nX>$ generated by the set $C_nX$ of linearly ordered subsets of $X$ of length $n+1.$
The discrete analogue $\Omega^nX$ for the differential forms of degree $n$ is simply the dual $\mathbb{Z}$-module $<C_nX>^*$,
or equivalently, the $\mathbb{Z}$-module $[C_nX, \mathbb{Z}].$ We denote by $$ \int_c \omega$$ the natural pairing
between $c \ \in \ <C_nX> \ $ and $\ \omega \in \Omega^nX=<C_nX>^*$. We use the same notation for the trivially extended pairing
$$\int: <C_{\geq 0}X> \times \Omega X \ \longrightarrow \ \mathbb{Z}, \ \ \ \ \ \mbox{where} \ \ \ $$
$$<CX> \ = \ \bigoplus_{n \geq 0} <C_nX> \ \ \ \ \  \mbox{and} \ \ \ \ \ \Omega X \ = \ \bigoplus_{n \geq 0} \Omega^nX.$$

Let $\mathbb{M}$ be the set of all maximal linearly ordered subsets of $X$. The fundamental class of $X$ is the $\mathbb{Z}$-linear chain $[X] \ \in \ <CX> \ $  given by:
$$[X] \ = \ \sum_{m\in \mathbb{M}}m .$$ For  $c \in CX$, we set $\ \mathbb{M}_c \ = \ \{m \in \mathbb{M} \ | \ c \subseteq m \}.$
 The Euler class $e_X \in  \Omega X$ of $X$ is such that
$e_X(c)=0\ $ for $ c \notin \mathbb{M}$, and for $m \in  \mathbb{M}$ we have that
$$e_X(m)\ = \ \sum_{\emptyset \neq c \subseteq m} \frac{(-1)^{|c|+1}}{|\mathbb{M}_c|}.$$
Similarly, the reduced Euler class $\widetilde{e}_X \in \Omega X$ vanishes on non-maximal linearly ordered subsets, and is given on a maximal chain $m \in \mathbb{M}$ by
$$ \widetilde{e}_X(m) \ = \ \sum_{c \subseteq m}\frac{(-1)^{|c|+1}}{|\mathbb{M}_c|}.$$

\begin{thm}{\em \textbf{(Gauss-Bonnet for Finite Posets)}\\ Let $(X, \leq)$ be a finite poset  and $|CX|$ be its geometric realization. We have that
$$\chi|CX| \ = \ \int_{[X]} e_X , \ \ \ \ \ \ \mbox{and} \ \ \ \ \ \ \widetilde{\chi}|CX| \ = \ \int_{[X]} \widetilde{e}_X .$$
}
\end{thm}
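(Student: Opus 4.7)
The plan is to expand both sides by their definitions and reduce the identities to a simple interchange of summation. The key observation is that when one unfolds the integral $\int_{[X]}e_X$, the denominators $|\mathbb{M}_c|$ introduced by the definition of the Euler class will cancel exactly against counting contributions, leaving a bare alternating sum that matches the combinatorial definition of the Euler characteristic.

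First I would tackle the unreduced statement. Since $e_X$ vanishes on non-maximal chains, we have
\[
\int_{[X]} e_X \ = \ \sum_{m \in \mathbb{M}} e_X(m) \ = \ \sum_{m \in \mathbb{M}}\sum_{\emptyset \neq c \subseteq m} \frac{(-1)^{|c|+1}}{|\mathbb{M}_c|}.
\]
Interchanging the order of summation gives
\[
\int_{[X]}e_X \ = \ \sum_{\emptyset \neq c \in CX}\frac{(-1)^{|c|+1}}{|\mathbb{M}_c|}\,\big|\{m\in\mathbb{M} : c\subseteq m\}\big|,
\]
and by the very definition of $\mathbb{M}_c$ the cardinality factor is precisely $|\mathbb{M}_c|$. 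Here I use that every nonempty linearly ordered subset $c \in CX$ of a finite poset extends to at least one maximal chain, so $|\mathbb{M}_c|\geq 1$ and the cancellation is legitimate. Grouping by $|c|=n+1$ yields $\sum_{n\geq 0}(-1)^n|C_nX| = \chi|CX|$, as desired.

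For the reduced statement the argument is the same, the only modification being that the inner sum in $\widetilde{e}_X(m)$ now also ranges over the empty chain $c=\emptyset$. Since $\emptyset\subseteq m$ for every $m\in\mathbb{M}$, we have $|\mathbb{M}_{\emptyset}|=|\mathbb{M}|$, so the contribution of the empty chain to $\int_{[X]}\widetilde{e}_X$ is $|\mathbb{M}|\cdot(-1)/|\mathbb{M}| = -1$, which is exactly the $n=-1$ term $(-1)^{-1}|C_{-1}X|$. Adding this to the unreduced computation gives $\sum_{n\geq -1}(-1)^n|C_nX| = \widetilde{\chi}|CX|$.

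There is essentially no hard step: the identities are formal consequences of how $e_X$ and $\widetilde{e}_X$ were defined, since the fractions $1/|\mathbb{M}_c|$ were arranged precisely so that the double sum over $(m,c)$ with $c\subseteq m$ collapses to an unweighted sum over chains $c$. The only point requiring a remark is the existence of a maximal chain through any given chain, which is immediate in the finite setting; absent finiteness, one would need the Euler class to be defined differently.
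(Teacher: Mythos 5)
Your proof is correct and is essentially the paper's own argument read in the opposite direction: both rest on interchanging the double sum over pairs $(c,m)$ with $c \subseteq m$ and cancelling the $|\mathbb{M}_c|$ weights, the paper starting from $\widetilde{\chi}|CX|$ and you starting from $\int_{[X]}e_X$. Your explicit remark that every chain in a finite poset extends to a maximal one (so $|\mathbb{M}_c|\geq 1$) makes visible a point the paper uses silently, but the substance is the same.
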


\begin{proof} We show the latter identity, the proof of the former being analogous:
$$\widetilde{\chi}|CX| \ = \ \sum_{n\geq -1}(-1)^n|C_nX| \ = \ \sum_{c \in CX}(-1)^{|c|+1} \ = \
\sum_{ c \in CX}\sum_{m \in \mathbb{M}_c}\frac{(-1)^{|c|+1}}{|\mathbb{M}_c|} \ =$$
$$\sum_{m \in \mathbb{M}}\sum_{c \subseteq m} \frac{(-1)^{|c|+1}}{|\mathbb{M}_c|}\ = \ \sum_{m \in \mathbb{M}} \widetilde{e}_X(m)\ = \ \int_{[X]} \widetilde{e}_X.$$
\end{proof}

\begin{cor}{\em  Let $(X, \leq)$ be a  poset and $x < y$ in $X,$ then $$\mu[x,y]\ = \ \widetilde{\chi}|C(x,y)|\ = \ \int_{[(x,y)]} \widetilde{e}_{(x,y)},$$
where $(x,y)$ is the poset $(x,y)=\{z \ | \ x < z < y \} \subseteq X$ with the induced order.
}
\end{cor}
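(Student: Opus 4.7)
The plan is to deduce this by chaining the two main results already in hand: the Hall-type homological interpretation (Theorem \ref{t2}) and the Gauss--Bonnet theorem for finite posets just proved. No new idea is needed; the content of the corollary is simply that these two formulas may be composed.

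First I would invoke Theorem \ref{t2}, which gives $\mu[x,y] = \widetilde{\chi}|C(x,y)|$ directly for any locally finite poset. The implicit hypothesis is local finiteness: for $\mu[x,y]$ to be defined the interval $[x,y]$ must be finite, and therefore so is the open interval $(x,y) = \{z \mid x < z < y\}$ with the induced order. Thus $((x,y), \leq)$ is itself a finite poset, and all the apparatus developed above (the simplicial complex $C(x,y)$, its geometric realization, the fundamental class $[(x,y)]$, and the reduced Euler class $\widetilde{e}_{(x,y)} \in \Omega(x,y)$) is defined for this subposet without modification.

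Next I would apply the Gauss--Bonnet theorem for finite posets to $(x,y)$. That theorem asserts $\widetilde{\chi}|CY| = \int_{[Y]} \widetilde{e}_Y$ for an arbitrary finite poset $Y$; specializing $Y = (x,y)$ yields
\[
\widetilde{\chi}|C(x,y)| \ = \ \int_{[(x,y)]} \widetilde{e}_{(x,y)}.
\]
Chaining this with the identity from the previous paragraph gives $\mu[x,y] = \widetilde{\chi}|C(x,y)| = \int_{[(x,y)]} \widetilde{e}_{(x,y)}$, which is exactly the claim.

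There is no real obstacle here; the only point worth verifying, and the one I would state explicitly for the reader, is that the construction of $\widetilde{e}_{(x,y)}$ and of the fundamental class $[(x,y)]$ is intrinsic to the subposet $(x,y)$ (it uses only maximal chains inside $(x,y)$ and the counts $|\mathbb{M}_c|$ computed there), so that the Gauss--Bonnet formula can indeed be applied to $(x,y)$ in isolation from the ambient $X$. Once that is observed, the corollary is immediate.
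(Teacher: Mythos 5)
Your proposal is correct and is exactly the argument the paper intends: the corollary is stated without proof precisely because it is the immediate composition of Theorem \ref{t2} with the Gauss--Bonnet theorem applied to the finite poset $(x,y)$. Your explicit remarks that local finiteness makes $(x,y)$ a finite poset and that the Euler class and fundamental class are intrinsic to $(x,y)$ are the right points to check, and nothing further is needed.
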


\section{Locally Finite Reflexive Directed Graphs}

We proceed to consider our second example of a M$\ddot{\mbox{o}}$bius theory. Recall that  a directed graph is a pair $(X,E)$ of sets together with a (source, target) map $$(s,t): E \longrightarrow X\times X.$$
For $x, y \in X$ we set $$E(x,y)\ = \ \{ e \in E \ | \ (s,t)(e)=(x,y) \}.$$ A directed graph $(X,E)$ is called reflexive if $E(x,x) \neq \emptyset \ $ for all
$x \in X.$ \\

A walk $\gamma$ of length $l(\gamma)=n \in \mathbb{N}_+$ in $(X,E)$ is a sequence $\gamma =(\gamma_1,\gamma_2,...,\gamma_n) \in E^n$ such that
$t(\gamma_i)=s(\gamma_{i+1})$ for $i \in [n-1],$ and $s(\gamma_i)\neq t(\gamma_{i})$ for $i \in [n].$   We say that the walk $\gamma$ begins at $s(\gamma_1)$ and ends at $t(\gamma_n).$\\

For $x, y \in X$, we let $W(x,y)\ ( W_k(x,y))$ be the set of walks (of length $k$) from $x$ to $y$. \\

A circuit $\gamma$ of length $n \in \mathbb{N}_{\geq 2}$ in $(X,E)$ is a walk of length $n$ such that
$t(\gamma_n)=s(\gamma_{1})$ and $t(\gamma_i) \neq t(\gamma_j)$ for $i \neq j.$  \\

\begin{defn}{\em A graph $(X,E)$ is locally finite if $E(x,x)$ and $W(x,y)$ are finite sets for all $x, y \in X.$
}
\end{defn}

Note that a locally finite directed graph has no circuits. Moreover, a finite directed graph is locally finite if and only if it has not circuits.\\

\begin{defn}\label{voy}{\em
Given a   locally finite reflexive directed graph $(X,E)$  we let $\leq$ be the relation on $X$ such that $x\leq y$ if and only if
$x=y$ or there is walk in  $(X,E)$ from $x$ to $y$.}
\end{defn}

\begin{prop}{\em For a locally finite reflexive directed graph $(X,E)$ the pair $(X, \leq)$, with the relation $\leq$ on $X$ from
Definition \ref{voy}, is a locally finite poset.}
\end{prop}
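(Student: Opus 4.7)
The plan is to verify the three poset axioms plus local finiteness of intervals in turn, with the concatenation of walks being the workhorse throughout.

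First I would dispatch reflexivity, which is immediate since Definition \ref{voy} asserts $x\le x$ by fiat. For transitivity, given $x\le y$ and $y\le z$, I would handle the trivial sub-cases ($x=y$, $y=z$) separately and otherwise take walks $\alpha\in W(x,y)$ and $\beta\in W(y,z)$ and form the concatenation $\alpha\beta=(\alpha_1,\dots,\alpha_m,\beta_1,\dots,\beta_n)$. I would check that this is again a walk: the matching condition $t(\alpha_m)=y=s(\beta_1)$ holds at the seam, the interior matching conditions are inherited from $\alpha$ and $\beta$, and the no-loop condition $s(\gamma_i)\ne t(\gamma_i)$ for each edge is inherited edge-by-edge.

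The main obstacle, and the only place where the hypotheses really bite, is antisymmetry. Suppose $x\le y$ and $y\le x$ with $x\ne y$; then walks $\alpha\in W(x,y)$ and $\beta\in W(y,x)$ exist. Concatenating gives $\alpha\beta\in W(x,x)$, and concatenating $k$ copies of $\alpha\beta$ produces $k$ pairwise distinct elements of $W(x,x)$ (distinct because their lengths $k\cdot(l(\alpha)+l(\beta))$ are all different). This contradicts the hypothesis $|W(x,x)|<\infty$. Hence $x=y$, proving antisymmetry. (As a by-product this is exactly the observation underlying the parenthetical remark that a locally finite directed graph has no circuits: a circuit at $x$ would yield infinitely many walks in $W(x,x)$ by the same iteration.)

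Finally, for local finiteness of the poset, fix $x\le y$ in $X$; I need $[x,y]$ finite. If $x=y$ then $[x,y]=\{x\}$, so assume $x<y$. For any $z\in[x,y]$ with $z\notin\{x,y\}$, choose walks $\alpha\in W(x,z)$ and $\beta\in W(z,y)$; as above $\alpha\beta\in W(x,y)$, and $z$ appears as a vertex of $\alpha\beta$ (namely as $t(\alpha_{l(\alpha)})=s(\beta_1)$). Therefore
\[
[x,y] \ \subseteq \ \{x,y\} \ \cup \ \bigcup_{\gamma\in W(x,y)} V(\gamma),
\]
where $V(\gamma)$ denotes the (at most $l(\gamma)+1$) vertices of $\gamma$. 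Since $W(x,y)$ is finite by hypothesis and each walk has finite length, the right-hand side is a finite union of finite sets, so $[x,y]$ is finite. This completes the verification that $(X,\le)$ is a locally finite poset.
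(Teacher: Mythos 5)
Your proof is correct and follows essentially the same route as the paper: concatenation of walks for transitivity, finiteness of $W(x,x)$ (equivalently, the absence of circuits) for antisymmetry, and the observation that every element of $[x,y]$ lies on one of the finitely many walks in $W(x,y)$ for local finiteness. You simply spell out the details that the paper leaves implicit.
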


\begin{proof}
Reflexivity is immediate. Transitivity follows since the concatenation of walks is a walk. Anti-symmetry is a consequence of the fact that $(X,E)$, a locally finite graph, has no circuits. Thus $(X, \leq)$  is a poset. It is locally finite since we have an injective map that associates to each $z$ with $x < z < y$ the walk $x \longrightarrow z \longrightarrow y$ from $x$ to $y$. Since $W(x,y)$ is a finite set, then necessarily the interval $[x,y]$ is a finite set as well.

\end{proof}

Thus  for any  locally finite reflexive directed graph $(X,E)$ we have the incidence algebra $[\mathbb{I}_{(X,\leq)}, R].$ The incidence or adjacency map $\xi \in [X\times X, R]\ $ of $\ (X,E)$ is given by
$$\xi(x,y) \ = \ |E(x,y)|.$$ Clearly, we may regard $\xi$ as an element of the incidence algebra $[\mathbb{I}_{(X,\leq)}, R].$ Note that the adjacency map $\xi_{(X,E)}$ of the graph $(X, E)$ in general is not equal to the adjacency map $\xi_{(X,\leq)}$ of the associated poset $(X, \leq)$ .\\

\begin{thm}\label{va1}
{\em \textbf{(M$\ddot{\mbox{o}}$bius Function for Locally Finite Reflexive Directed Graphs)}\\
 Let $(X,E)$ be a locally finite reflexive directed graph. The adjacency map $\xi$ of $(X,E)$ is
invertible in $[\mathbb{I}_{(X,\leq)}, R];$  its inverse $\mu$, called the M$\ddot{\mbox{o}}$bius function of $(X,E)$,  is such that $$\mu[x,x] \ = \ \frac{1}{|E(x,x)|}$$ and for $\ x \neq y$ in $X$ we have:
$$\mu[x,y]\ = \
\sum_{\gamma \in W(x,y)}\frac{(-1)^{l(\gamma)}}{|E(s(\gamma_{1}),s(\gamma_{1}))||E(t(\gamma_{1}),t(\gamma_{1})|......|E(t(\gamma_{l(n)}),t(\gamma_{l(n)})|} .$$
}
\end{thm}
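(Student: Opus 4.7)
The plan is to apply Theorem \ref{l1} to the adjacency map $\xi \in [\mathbb{I}_{(X,\leq)},R]$ and then to reinterpret the resulting sum over chains in $(X,\leq)$ as a sum over walks in the graph $(X,E)$.

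First I would observe that $\xi[x,x] = |E(x,x)|$, which is implicitly assumed to be a unit in $R$ (the formula displays $1/|E(x,x)|$). Since $(X,\leq)$ is a locally finite poset by the preceding proposition, Theorem \ref{l1} immediately yields invertibility of $\xi$, the value $\mu[x,x] = 1/|E(x,x)|$, and the explicit expansion
$$\mu[x,y] \;=\; \sum_{n\geq 1}\ \sum_{x=x_0<x_1<\cdots<x_n=y}(-1)^n\frac{\xi[x_0,x_1]\cdots\xi[x_{n-1},x_n]}{\xi[x_0,x_0]\xi[x_1,x_1]\cdots\xi[x_n,x_n]}.$$

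Second, I would establish a bijection between walks from $x$ to $y$ in $(X,E)$ and pairs consisting of a strict chain $x=x_0<x_1<\cdots<x_n=y$ in $(X,\leq)$ together with a choice of edge $e_i \in E(x_{i-1},x_i)$ for each $i$. In one direction, a walk $\gamma=(\gamma_1,\ldots,\gamma_n)$ produces the sequence of visited vertices $s(\gamma_1),t(\gamma_1),\ldots,t(\gamma_n)$ together with its own edges. That these vertices form a \emph{strict} chain uses two ingredients: edges are non-loops ($s(\gamma_i)\neq t(\gamma_i)$), and walks cannot revisit vertices, since any minimal revisiting subwalk would be a circuit — contradicting the fact that a locally finite directed graph has no circuits, as noted just before Definition \ref{voy}. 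Conversely, a chain together with an edge tuple assembles into a walk. Under this bijection, $\xi[x_{i-1},x_i]=|E(x_{i-1},x_i)|$ in the numerator counts exactly the walks whose vertex trace is the chain $x_0<\cdots<x_n$, and the denominator $|E(x_0,x_0)|\cdots|E(x_n,x_n)|$ matches the product $|E(s(\gamma_1),s(\gamma_1))|\,|E(t(\gamma_1),t(\gamma_1))|\cdots|E(t(\gamma_n),t(\gamma_n))|$ appearing in the statement.

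Thus summing first over chains and then, for each chain, over edge tuples is the same as summing over walks, and the expansion supplied by Theorem \ref{l1} rearranges directly into the asserted formula. The main (though modest) obstacle is the no-revisit claim for walks: once it is rigorously derived from the absence of circuits, the rest of the argument is just a reindexing of the sum.
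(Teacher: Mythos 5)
Your proposal is correct and follows exactly the route the paper intends: the paper's own proof is just the one-line remark that the theorem ``follows from Theorem \ref{l1} and the considerations above,'' and your argument is the honest expansion of that remark --- apply Theorem \ref{l1} to $\xi$ and reindex the sum over chains with edge choices as a sum over walks. Your explicit justification that walks cannot revisit vertices (via the absence of circuits, or equivalently via antisymmetry of $\leq$) is a detail the paper leaves implicit, and it is handled correctly.
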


\begin{proof}
Follows from  Theorem \ref{l1} the considerations above.
\end{proof}

Recall that given $a\in X$ there is a structure of  right $[\mathbb{I}_{(X, \leq)},R]$-module on
 $[X_{\geq a},R]$  via the $R$-bilinear map $\star: [X_{\geq a},R]\times [\mathbb{I}_X,R] \longrightarrow [X_{\geq a},R]$ given by $$f\star g(y)\ = \ \sum_{a\leq x\leq y}f(x)g[x,y] .$$

\begin{cor}\label{t78}{\em \textbf{(M$\ddot{\mbox{o}}$bius Inversion for Locally Finite Reflexive Directed Graphs)}\\
Fix $a \in X.$ For $f,g \in [X_{\geq a},R]$ we have that

$$g(y) \ = \ \sum_{e \in E, \ a \leq se , \ te =y} f(se) \ \ \ \ \mbox{if and only if}\ \ \ \ f(y)\ = \ \sum_{a \leq x \leq y} g(x)\mu(x,y).$$}
\end{cor}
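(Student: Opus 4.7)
The plan is to reduce the statement to an inversion argument inside the incidence algebra $[\mathbb{I}_{(X,\leq)}, R]$, using the adjacency map $\xi$ of the graph (which is a unit of this algebra by Theorem \ref{va1}) in place of the constant-$1$ map that appeared in the poset Möbius Inversion Theorem. The key is to rewrite the edge sum on the left-hand side as an $\star$-convolution $f \star \xi$.

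First I would regroup the sum over edges by their source vertex. For each edge $e$ with $a \leq s(e)$ and $t(e) = y$, setting $x = s(e)$ gives
$$\sum_{\substack{e \in E \\ a \leq se,\ te=y}} f(se) \ = \ \sum_{x \geq a} |E(x,y)|\, f(x).$$
Because $|E(x,y)|$ is nonzero only when either $x = y$ (a loop, still allowed by reflexivity) or there is an edge $x \to y$ with $x \neq y$, which forces $x < y$ in the poset $(X, \leq)$ of Definition \ref{voy}, the sum is automatically supported on $a \leq x \leq y$. Identifying $|E(x,y)|$ with the adjacency map $\xi(x,y) \in [\mathbb{I}_{(X, \leq)}, R]$ yields
$$\sum_{\substack{e \in E \\ a \leq se,\ te=y}} f(se) \ = \ \sum_{a \leq x \leq y} f(x)\,\xi(x,y) \ = \ (f \star \xi)(y),$$
where $\star$ denotes the right $[\mathbb{I}_{(X,\leq)}, R]$-module action on $[X_{\geq a}, R]$.

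Next I would invoke Theorem \ref{va1}, which guarantees that $\xi$ is invertible in the incidence algebra with inverse the Möbius function $\mu$ of the graph. Associativity of the module action then gives the inversion: if $g = f \star \xi$ then $g \star \mu = f \star (\xi \star \mu) = f \star \epsilon = f$, and conversely if $f = g \star \mu$ then $f \star \xi = g \star (\mu \star \xi) = g$. Unfolding $(g \star \mu)(y)$ via the definition of the module action produces exactly $f(y) = \sum_{a \leq x \leq y} g(x)\, \mu(x,y)$, completing the equivalence.

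I expect no substantive obstacle, since both halves of the corollary are already packaged in earlier results; the only thing to watch is the bookkeeping in Step 1, specifically making sure that loops $E(y,y)$ are correctly absorbed into the $x = y$ diagonal term (so that the edge sum really does agree with the interval sum defining $f \star \xi$) and that the sum restricts to $a \leq x \leq y$ automatically once $|E(x,y)|$ is taken into account.
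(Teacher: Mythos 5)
Your proposal is correct and follows essentially the same route as the paper: the paper's proof likewise rewrites the edge sum as $(f\star\xi)(y)$ by grouping edges according to their source and then appeals to the invertibility of the adjacency map $\xi$ (Theorem \ref{va1}) to pass between $g=f\star\xi$ and $f=g\star\mu$. Your extra bookkeeping about loops and the automatic restriction to $a\leq x\leq y$ is a correct elaboration of the step the paper leaves implicit.
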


\begin{proof}We have that $g=f\star \xi \ $ if and only if $\ g=g\star \mu. $ The result follows since
$$ g(y)\ = \ f\star \xi(y)\ = \ \sum_{a\leq x\leq y}f(x)\xi[x,y]\ = \ \sum_{e \in E, \ a \leq se , \ te =y} f(se).$$
\end{proof}

Given directed graphs $(X, E)$ and $(Y, F)$, the product graph $(X\times Y, E\times F)$ is such that
an edge from $(a_1,b_1)$ to $(a_2,b_2)$ in $X\times Y$ is the same as a pair of edges $(e,f) \in E\times F$ where $e$ is an edge from $a_1$ to $a_2$ in $X$, and $f$ is an edge from $b_1$ to $b_2$ in $Y.$

\begin{prop}{\em Let $(X, E)$ and $(Y, F)$ be finite reflexive circuit-less directed graphs, the
the product graph $(X\times Y, E\times F)$ is also a finite reflexive circuit-less, and we have an isomorphism of posets $$(X\times Y, \leq) \ \simeq \ (X,\leq) \times (Y,\leq) $$
thus we have a natural isomorphism of algebras
$$([\mathbb{I}_{(X\times Y, \leq)}, R], \star)  \ \simeq \ ([\mathbb{I}_{(X,\leq)}, R], \star) \otimes
 ([\mathbb{I}_{(Y,\leq)}, R], \star).$$
Moreover, we have that $$ \xi_{X \times Y}[(a_1,b_1),(a_2,b_2)] \ = \ \xi_X [a_1,a_2]\xi_Y[b_1,b_2]$$  and thus
$$\mu_{X \times Y}[(a_1,b_1),(a_2,b_2)] \ = \ \mu_X [a_1,a_1]\mu_Y[b_1,b_2].$$
}
\end{prop}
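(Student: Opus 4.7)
The plan is to reduce the claim to Proposition \ref{ww} by showing that the associated poset of the product graph is isomorphic to the product of the associated posets. Once this is in place, all the algebra statements follow.

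First I would verify that $(X\times Y, E\times F)$ is finite reflexive and circuit-less. Finiteness is immediate. Reflexivity follows from the identity $(E\times F)((a,b),(a,b)) = E(a,a)\times F(b,b)$, which is non-empty since both factors are reflexive. For the absence of circuits, suppose for contradiction that $(a_0,b_0)\to(a_1,b_1)\to\cdots\to(a_n,b_n)=(a_0,b_0)$ is a circuit in the product. For each step, either the $X$-edge is a loop (giving $a_i=a_{i+1}$) or it is a genuine edge in $X$ (giving $a_i<a_{i+1}$ in $(X,\leq)$). In either case $a_i\leq a_{i+1}$, so $a_0\leq a_1\leq\cdots\leq a_n=a_0$, forcing all $a_i$ to be equal; by symmetry all $b_i$ are equal, contradicting that the intermediate vertices of a circuit are distinct.

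Next I would establish the isomorphism of posets $(X\times Y,\leq)\simeq (X,\leq)\times(Y,\leq)$. For the forward direction, any walk in the product projects (step by step) to sequences in $X$ and $Y$ where each step is either a loop or a genuine edge, so both projections witness $a_1\leq a_2$ and $b_1\leq b_2$. For the reverse direction, given $a_1\leq a_2$ and $b_1\leq b_2$, I would concatenate two walks in the product: first move along the $X$-coordinate while sitting at $b_1$ (using edges in $E$ for the $X$-part and loops in $F(b_1,b_1)$ for the $Y$-part, which exist by reflexivity), then move along the $Y$-coordinate while sitting at $a_2$ (using loops in $E(a_2,a_2)$). The cases where one coordinate already agrees are handled by using only the second half of this construction. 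This is the step I expect to be the most delicate, since one must keep track of which coordinate requires reflexive loops.

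With the poset isomorphism in hand, Proposition \ref{ww} immediately delivers the bijection of intervals and the induced tensor-product isomorphism $([\mathbb{I}_{(X\times Y,\leq)},R],\star)\simeq ([\mathbb{I}_{(X,\leq)},R],\star)\otimes ([\mathbb{I}_{(Y,\leq)},R],\star)$. The factorization of the adjacency map is a direct count:
$$\xi_{X\times Y}[(a_1,b_1),(a_2,b_2)] \ = \ |(E\times F)((a_1,b_1),(a_2,b_2))| \ = \ |E(a_1,a_2)|\,|F(b_1,b_2)| \ = \ \xi_X[a_1,a_2]\,\xi_Y[b_1,b_2].$$
Under the algebra isomorphism above, this says $\xi_{X\times Y}\leftrightarrow \xi_X\otimes\xi_Y$. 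Since the inverse of a tensor product of units in $A\otimes B$ is the tensor product of the inverses, and since inverses in the incidence algebras are unique by Theorem \ref{va1}, we conclude that $\mu_{X\times Y}\leftrightarrow \mu_X\otimes\mu_Y$, which yields the multiplicativity formula on corresponding intervals.
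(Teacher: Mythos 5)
Your proof is correct. The paper states this proposition without proof, and your argument --- checking that the product graph is finite, reflexive and circuit-less, identifying $(X\times Y,\leq)$ with $(X,\leq)\times(Y,\leq)$ by projecting walks coordinatewise and, conversely, padding walks with reflexive loops, and then invoking Proposition \ref{ww} together with uniqueness of inverses to get $\mu_{X\times Y}=\mu_X\otimes\mu_Y$ --- is exactly the reduction that the statement's own phrasing (``thus'') intends.
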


\

Let $(X, \leq)$ be a locally finite poset and consider the cover relation $\prec$ on $X$ given by
$$x \prec y \ \ \mbox{if and only if} \ \ x < y \ \ \mbox{and there is no} \ \
z \in X \ \  \mbox{such that} \ \  x < z <y.$$

Consider the map $\eta \in [\mathbb{I}_{(X, \leq)}, R]$ given by
$$\eta[x,y] \ = \
\left\{\begin{array}{cc} 1 & \ \mathrm{if} \ x = y, \\
-1 & \ \ \  \mathrm{otherwise.}  \end{array}\right.
$$
It follows from Theorem \ref{va1} that $\eta \in [\mathbb{I}_{(X, \leq)}, R]$ is invertible and its inverse $\eta^{-1} \in [\mathbb{I}_{(X, \leq)}, R]$ is such that
$$\eta^{-1}[x,x] \ = \ 1\ \ \ \ \ \mbox{and}\ \ \ \ \ \eta^{-1}[x,y] \ = \ |\mathbb{M}[x,y]|,$$
where $\mathbb{M}[x,y]$ is the set of maximal linearly ordered subsets of the interval $[x,y] \subseteq X.$ \\

Fix $a \in X$,  the finite difference operator $$\Delta: [X_{\geq a}, R] \longrightarrow [X_{\geq a}, R] $$ is given for $f \in [X, R]\ $ and $\ y \in X_{\geq a}$ by
$$\Delta f (y) \ = \ f(y)\ - \ \sum_{a \leq x \prec y}f(x) \ = \ f\star \eta (y). $$
The M$\ddot{\mbox{o}}$bius inversion formula tell us that
$$\Delta f \ = \ g  \ \ \ \ \ \mbox{if and only if} \ \ \ \ \ f(y)\ = \ \sum_{a \leq x \leq y}|\mathrm{M}[x,y]|g(x).$$

\section{Locally Finite Categories}\label{s1}

In this section we introduce the coarse M$\ddot{\mbox{o}}$bius  theory for categories (although using a different terminology and focusing on a concrete case) developed
by Leinster \cite{le, le2}. This sort of M$\ddot{\mbox{o}}$bius theory depends only on the underlying graph of the category. \\

Given a category $C$ we let $C_0$ be the collection of its  objects. Abusing notation we usually write $x \in C$ instead of $x \in C_0$.
Let $C_1$ be the collection of morphisms in $C$, and  $C(x,y)$  be the set of morphisms in $C $ from $x$ to $y$. The source and target maps
$$(s,t): C_1 \longrightarrow C_0 \times C_0$$ together with the  map $1:C_0 \longrightarrow C_1$, sending each object $x\in C$ to its identity $1_x \in C(x,x), \ $ give $C$ the structure of
a reflexive directed graph. Of course, in a category we have in addition the composition maps
$$\circ :  C(x,y) \times C(y,z) \longrightarrow C(x,z)$$ sending a pair of morphisms $(f,g) \in  C(x,y) \times C(y,z)$
to its composition $gf.$ Composition of morphisms is associative and unital in the sense that
$$(hg)f = h(gf) \ \ \ \mbox{and}\ \ \ 1_yf=f=f1_x, \ \ \ \mbox{for} \ \ \  f,g,h \in  C(x,y) \times C(y,z) \times C(y,z) .$$
The notation $x {\overset{f} \longrightarrow} y$ means that $f$ is a morphism in $C(x,y)$, and
$x = sf$ and  $y=tf$. For $x,y \in C$ we set $$[x,y]\ =\ \{z \in C_0 \ | \ \mbox{there is a diagram} \ \ x {\longrightarrow} z {\longrightarrow}y\ \ \mbox{in} \  \ C \ \}.$$

\begin{defn}\label{loc}
{\em A category $C$ is locally finite if the following conditions hold:
\begin{itemize}
\item $C(x,y)$ is a finite set for all $x,y \in C.$
\item If there is a diagram $x {\longrightarrow} y {\longrightarrow}x$ in $C$, then $x=y.$
\item For $x,y \in C$, the set $[x,y]$ is a finite.
\end{itemize}
}
\end{defn}

\begin{rem}{\em
Locally finite categories are skeletal.
}
\end{rem}

\begin{exmp}{\em
A finite category (i.e. a category with a finite set of morphisms) is locally finite if and only if in any diagram
$x {\longrightarrow} y {\longrightarrow}x$, we have that $x=y$.
}
\end{exmp}

\begin{exmp}{\em
Let $C$ be a category with $C(x,y)$ finite for all $x,y \in C,$ let $(X, \leq )$ be a locally finite poset, and let $F:X \longrightarrow C$ be a functor.
The category $X_F$ with objects $X$ and morphisms given by
$$X_F(x,y) \ = \
\left\{\begin{array}{cc} C(F(x),F(y)) & \ \mathrm{if} \ x \leq y, \\
\emptyset & \ \ \  \mathrm{otherwise,}  \end{array}\right.
$$ is locally finite.
}
\end{exmp}

\begin{prop}{\em A category is locally finite if and only if its  graph is locally finite.
}
\end{prop}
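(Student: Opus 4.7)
The plan is to prove the two directions of the equivalence by unpacking both definitions in terms of edges (morphisms) and walks.

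For the forward direction, I assume $C$ is locally finite in the sense of Definition \ref{loc} and check the two conditions for graph local finiteness. The equality $E(x,x) = C(x,x)$ gives finiteness of loop sets immediately. The non-trivial task is showing $W(x,y)$ is finite. I would argue as follows: given any walk $\gamma = (\gamma_1,\dots,\gamma_n)$ from $x$ to $y$ with intermediate vertices $x = x_0, x_1,\dots, x_n = y$, each $x_i$ lies in $[x,y]$, since composition of $\gamma_1\cdots \gamma_i$ and $\gamma_{i+1}\cdots\gamma_n$ provides a diagram $x \to x_i \to y$. Hence the number of distinct vertex sequences is bounded using finiteness of $[x,y]$, and for each such sequence the number of choices of edges is bounded by $\prod_i |C(x_{i-1},x_i)|$, which is finite.

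The subtler point, which I view as the main obstacle, is that walks cannot repeat vertices in a locally finite category, so the length of walks is bounded. If $x_i = x_j$ with $i < j$, then composing $\gamma_{i+2}\cdots\gamma_j$ gives a morphism $x_{i+1} \to x_j = x_i$, and together with $\gamma_{i+1}\colon x_i \to x_{i+1}$ we obtain a diagram $x_i \to x_{i+1} \to x_i$. The second axiom of Definition \ref{loc} forces $x_i = x_{i+1}$, contradicting $s(\gamma_{i+1}) \neq t(\gamma_{i+1})$. (The boundary case $j = i+1$ is similar and contradicts non-loopness of $\gamma_{i+1}$ directly.) Thus walks have length at most $|[x,y]|$, giving the finite bound.

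For the reverse direction, assume $(X,E) = (C_0,C_1)$ is a locally finite graph. To check the first axiom of Definition \ref{loc}, for $x \neq y$ every morphism in $C(x,y)$ is a walk of length one, so $C(x,y) \subseteq W(x,y)$ is finite; for $x = y$, finiteness is the condition $|E(x,x)| < \infty$. For the second axiom, suppose $f\colon x \to y$ and $g\colon y \to x$ with $x \neq y$; then the alternating sequences $(f,g),(f,g,f),(f,g,f,g),\dots$ are all valid walks, producing an infinite subset of $W(x,x)$, contradicting local finiteness. Finally, for the third axiom, each $z \in [x,y]$ either equals $x$ or $y$, or else produces a walk of length two $(f,g) \in W(x,y)$ through a non-loop pair $f\colon x \to z$, $g\colon z\to y$; since $W(x,y)$ is finite the collection of such intermediate $z$ is finite, and the two exceptional values contribute at most two more elements.
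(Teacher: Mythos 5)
Your proof is correct and follows essentially the same route as the paper: both directions rest on the observations that objects in a walk must be distinct (via the second axiom of local finiteness), that intermediate objects of walks lie in $[x,y]$, and that morphisms embed into length-one walks while $[x,y]$ is covered by $\{x\}$, $\{y\}$, and length-two walks. You spell out a few steps the paper leaves implicit (the role of finiteness of the hom-sets in bounding walks with a fixed vertex sequence, and the explicit infinite family of walks replacing the paper's appeal to the absence of circuits), but the underlying argument is the same.
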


\begin{proof}
Let $C$ be a locally finite category. Then $C(x,x)$ is a finite set for all $x \in C.$ For $x\neq y \in C$, consider the set $W(x,y)$ of walks from $x$ to $y$.
The objects in a walk
$$x {\longrightarrow} x_1 {\longrightarrow}  .....\longrightarrow x_{n}\longrightarrow y $$
are all distinct because in any configuration of the form
$$x {\longrightarrow} x_1 {\longrightarrow}  .....\longrightarrow x_{k}\longrightarrow x $$
we must have that $x=x_1=...=x_k$. Notice that if an object $z \in C$ appears in a walk from $x$ to $y$, then $z\in [x,y].$
Since $[x,y]$ is a finite set, then necessarily $W(x,y)$ is also a finite set. In particular we have that $W(x,x)=\emptyset.$ Thus the graph of $C$ is locally finite. \\

Conversely, assume that the graph of $C$ is locally finite. The sets $C(x,x)$ are finite by definition. For $x \neq y,$ the sets $C(x,y)$ and $[x,y]$ are finite,
since there is an injective map $C(x,y) \longrightarrow W(x,y)$, and  a surjective map
$$\{ x\} \sqcup W_2(x,y) \sqcup \{y \} \longrightarrow  [x,y] .$$ Recall that a locally finite graph has no circuits,
thus in any configuration $x {\longrightarrow} y {\longrightarrow}x$ we must have that $x=y.$ In particular $[x,x]=\{x\}.$
\end{proof}

\begin{lem}\label{order}
{\em Let $C$ be a category with $C(x,y)$ finite  for  $x,y \in C.$ Let $\leq$ be the relation on $C_0$ given by
 $x\leq y$ if and only if there is a morphism $x\longrightarrow y$ in $C$.
Then $C$ is a locally finite category if and only if  $x\leq y$ defines a locally finite partial order on $C_0.$
}
\end{lem}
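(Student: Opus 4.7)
The plan is to unpack both conditions side by side and observe that they match up term-by-term, with the relation $\leq$ automatically reflexive and transitive, and the category-theoretic interval $[x,y]$ from Definition \ref{loc} coinciding with the poset-theoretic interval $\{z\in C_0\mid x\leq z\leq y\}$.

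For the forward direction, assume $C$ is locally finite. Reflexivity of $\leq$ is immediate from the identity morphism $1_x\colon x\to x$, and transitivity follows from composition: if $f\colon x\to y$ and $g\colon y\to z$ exist, then $gf\colon x\to z$ exists. Antisymmetry is exactly condition (ii) of Definition \ref{loc}: if $x\leq y$ and $y\leq x$ there is a diagram $x\to y\to x$, forcing $x=y$. Thus $\leq$ is a partial order. To see local finiteness of this poset, note that $z$ lies in the poset interval $\{z\mid x\leq z\leq y\}$ precisely when there exist morphisms $x\to z$ and $z\to y$, i.e.\ when $z\in[x,y]$ in the sense of the category. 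Condition (iii) of Definition \ref{loc} then gives finiteness.

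For the converse, assume $\leq$ is a locally finite partial order on $C_0$. Finiteness of the hom-sets $C(x,y)$ is assumed as a hypothesis of the lemma. For condition (ii) of Definition \ref{loc}, any diagram $x\to y\to x$ yields $x\leq y$ and $y\leq x$, so antisymmetry gives $x=y$. Finally, the set $[x,y]$ from Definition \ref{loc} coincides with the poset interval from $x$ to $y$, which is finite by assumption, establishing condition (iii).

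There is no real obstacle here: the content of the lemma is that the three conditions in Definition \ref{loc} are precisely the conditions required for $\leq$ to be a locally finite partial order, after noting that reflexivity and transitivity come for free from the categorical structure. The only bookkeeping to be careful about is distinguishing the hypothesis ``$C(x,y)$ finite'' (which is assumed on both sides of the equivalence) from the structural conditions (ii) and (iii), so that the equivalence is genuinely between condition (ii)+(iii) and the partial-order+local-finiteness properties of $\leq$.
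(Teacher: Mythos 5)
Your proof is correct and follows essentially the same route as the paper's: reflexivity and transitivity come for free from identities and composition, antisymmetry is matched with the second condition of Definition \ref{loc}, and local finiteness is matched with the third condition via the identification of the categorical set $[x,y]$ with the poset interval $\{z \mid x \leq z \leq y\}$. Your version is slightly more explicit about why each ingredient holds, but the decomposition of the argument is the same.
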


\begin{proof}
Assume $C$ is a locally finite category, then $C_0$ is a poset with the ordering from statement of the theorem. Reflexivity and transitivity are immediate. Anti-symmetry follows from the second property in Definition \ref{loc}. The poset $(C_0, \leq)$ is locally finite by the third property in  Definition \ref{loc}.\\

Conversely, if  $(C_0, \leq)$ is a locally finite poset, then a diagram $x \longrightarrow y \longrightarrow x$ implies that $x=y$ by the reflexivity of $\leq.$ The interval $[x,y]$ is finite since
$$[x,y]\ =\ \{z \in C_0 \ | \ \mbox{there is a diagram} \ \ x {\longrightarrow} z {\longrightarrow}y\ \ \mbox{in} \  \ C \ \} \ = \ \{z \in C_0\ | \  x \leq z \leq y \}.$$
\end{proof}

Thus for any locally finite category $C$ we have the partially order set $(C_0,\leq)$ and the corresponding incidence algebra $([\mathbb{I}_{(C_0,\leq)}, R], \star)$.
The incidence or adjacency map of $C$ $$\xi:C_0 \times C_0  \longrightarrow R$$ is given by $$\xi[x,y]=|C(x,y)|$$ and lies naturally in $[\mathbb{I}_{(C_0,\leq)}, R].$

\begin{thm}\label{t3}{\em \textbf{(M$\ddot{\mbox{o}}$bius Function for Locally Finite Categories)}\\
Let $C$ be a locally finite category. The adjacency map $\xi$ of $C$ is
invertible in $[\mathbb{I}_{(C_0, \leq)}, R];$  its inverse $\mu$, called the M$\ddot{\mbox{o}}$bius function of $C$,
is such that $$\mu[x,x]  =  \frac{1}{|C(x,x)|}$$ and for $x \neq y \in C$ we have that
$$\mu[x,y]\ = \
\sum_{n \geq 1}\sum_{x =x_0 < x_1 < ... < x_n=y}
\frac{(-1)^n \ |C(x_0, x_1)|.................|C(x_{n-1}, x_n)|}{|C(x_0,x_0)||C(x_1,x_1)|.......|C(x_{n-1},x_{n-1})||C(x_n,x_n)|}\ = $$
$$\sum_{n \geq 1}\sum_{x =x_0 \rightarrow x_1 \rightarrow ... \rightarrow x_n=y}
\frac{(-1)^n}{|C(x_0,x_0)||C(x_1,x_1)|.......|C(x_{n-1},x_{n-1})||C(x_n,x_n)|} ,$$
where in the second identity the sum ranges over all diagrams in $C$ of the form $$x =x_0 \longrightarrow x_1 \longrightarrow ...... \longrightarrow x_n=y.$$
}
\end{thm}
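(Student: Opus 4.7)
The plan is to reduce the theorem to Theorem \ref{l1} applied to the associated locally finite poset $(C_0, \leq)$ of Lemma \ref{order}.

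First I would check the hypotheses of Theorem \ref{l1}. By Lemma \ref{order}, $(C_0, \leq)$ is a locally finite poset, so the incidence algebra $([\mathbb{I}_{(C_0, \leq)}, R], \star)$ is defined, and the adjacency map $\xi$ of $C$ lies in it because $|C(x,y)|$ depends only on the pair $(x,y)$ and is finite by local finiteness of $C$. The crucial diagonal values satisfy $\xi[x,x] = |C(x,x)| \geq 1$ (we always have the identity $1_x$), which is a positive integer and hence a unit in $R$ under the standing characteristic zero assumption. Therefore Theorem \ref{l1} applies and yields simultaneously the invertibility of $\xi$, the value $\mu[x,x] = \xi[x,x]^{-1} = 1/|C(x,x)|$, and the recursive formula
\[
\mu[x,y] \ = \ \sum_{n\geq 1}\sum_{x=x_0<x_1<\cdots<x_n=y}(-1)^n\,\frac{\xi[x_0,x_1]\cdots\xi[x_{n-1},x_n]}{\xi[x_0,x_0]\xi[x_1,x_1]\cdots \xi[x_n,x_n]}
\]
for $x<y$ in $C_0$. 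Direct substitution $\xi[x_i,x_j] = |C(x_i,x_j)|$ recovers the first displayed formula in the statement.

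For the second formula I would reinterpret the numerator combinatorially. For any fixed strictly increasing chain $x=x_0<x_1<\cdots<x_n=y$ in $(C_0,\leq)$, the product $|C(x_0,x_1)|\cdots|C(x_{n-1},x_n)|$ counts the number of composable sequences of morphisms $x_0 \xrightarrow{f_1} x_1 \xrightarrow{f_2} \cdots \xrightarrow{f_n} x_n$ in $C$ with $f_i \in C(x_{i-1},x_i)$. Conversely, any diagram $x=x_0 \to x_1 \to \cdots \to x_n=y$ in $C$ forces the objects $x_0,\ldots,x_n$ to be strictly increasing in $(C_0, \leq)$: by the second clause of Definition \ref{loc}, any configuration $x_i \to x_{i+1} \to x_i$ would collapse $x_i=x_{i+1}$, and more generally any repetition of objects along the diagram would produce a cycle forbidden by local finiteness. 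Grouping the sum over diagrams according to their underlying chain of objects therefore converts the first formula into the second.

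The argument is essentially an application of results already at hand, so no step presents a serious obstacle; the only nontrivial point is the bijective rephrasing in the last step, whose core content is the implication \emph{``any $x \to y \to x$ forces $x=y$''} supplied by the definition of locally finite category. In particular, no new computation of Möbius values is required beyond what Theorem \ref{l1} provides.
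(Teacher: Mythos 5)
Your proposal is correct and takes essentially the same route as the paper, whose entire proof is the one line ``The result follows from Theorem \ref{l1}'': you have simply supplied the details the paper omits, namely that Lemma \ref{order} makes $(C_0,\leq)$ a locally finite poset, that $\xi[x,x]=|C(x,x)|$ is a unit (a positive integer, tacitly assumed invertible in $R$ here exactly as in the paper's own statement of $\mu[x,x]=1/|C(x,x)|$), and that substituting $\xi[x_i,x_j]=|C(x_i,x_j)|$ into the recursion of Theorem \ref{l1} yields the first displayed formula. The one point deserving a word of care is the passage to the second formula: the phrase ``all diagrams $x=x_0\rightarrow\cdots\rightarrow x_n=y$'' must be read as diagrams in which no arrow is an endomorphism (equivalently, consecutive objects are distinct), since a diagram such as $x\xrightarrow{1_x}x\rightarrow y$ is not excluded by your appeal to Definition \ref{loc} --- your argument correctly rules out only \emph{non-consecutive} repetitions of objects --- but under that reading your identification of the numerator $|C(x_0,x_1)|\cdots|C(x_{n-1},x_n)|$ with the number of diagrams lying over a fixed strict chain is exactly right.
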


\begin{proof} The result follows  from Theorem \ref{l1}.
\end{proof}

Fix $a \in C$. Then $[C_{0,{\geq a}}, R]$ is a right $[\mathbb{I}_{(C_0, \leq)}, R]$-module, thus we get the following result.

\begin{cor}\label{t3}{\em \textbf{(M$\ddot{\mbox{o}}$bius Inversion for Locally Finite Categories)}\\
Fix $a \in X.$ For $f,g \in [C_{0,{\geq a}},R]$ we have that
$$g(y)\ = \ \sum_{a \leq x \leq y} f(x)|C(x,y)| \ \ \ \ \mbox{if and only if}\ \ \ \ f(y)\ =\ \sum_{a \leq x \leq y} g(x)\mu(x,y).$$

}
\end{cor}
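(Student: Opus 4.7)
The plan is to reduce this to the Möbius inversion formula for locally finite posets, applied to the underlying poset $(C_0, \leq)$ from Lemma \ref{order}, using the fact that the adjacency map $\xi \in [\mathbb{I}_{(C_0, \leq)}, R]$ of $C$ evaluates as $\xi[x,y] = |C(x,y)|$ and is invertible with inverse $\mu$ (Theorem \ref{t3}). The entire statement should then come out as a direct computation in the right $[\mathbb{I}_{(C_0, \leq)}, R]$-module $[C_{0, \geq a}, R]$.

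First I would rewrite both sides of the claimed equivalence in convolution form. By the definition of the right action $\star$ recalled just before Corollary \ref{t78} and reused in this section, we have
$$f \star \xi(y) \ = \ \sum_{a \leq x \leq y} f(x)\,\xi[x,y] \ = \ \sum_{a \leq x \leq y} f(x)\,|C(x,y)|,$$
and similarly
$$g \star \mu(y) \ = \ \sum_{a \leq x \leq y} g(x)\,\mu[x,y].$$
So the corollary is just the assertion that $g = f \star \xi$ if and only if $f = g \star \mu$.

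Next I would invoke associativity of the module action together with $\xi \star \mu = \mu \star \xi = \epsilon$ in $[\mathbb{I}_{(C_0,\leq)}, R]$, which follows from Theorem \ref{t3} applied to the locally finite poset $(C_0, \leq)$ (which exists by Lemma \ref{order}). Then, assuming $g = f \star \xi$, we compute $g \star \mu = (f \star \xi) \star \mu = f \star (\xi \star \mu) = f \star \epsilon = f$; conversely, assuming $f = g \star \mu$, we compute $f \star \xi = (g \star \mu) \star \xi = g \star (\mu \star \xi) = g \star \epsilon = g$. Finally I would note that $f \star \epsilon = f$ uses the unit property $\epsilon[x,x] = 1$, $\epsilon[x,y] = 0$ for $x < y$, so that $f \star \epsilon(y) = f(y)\epsilon[y,y] = f(y)$.

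There is essentially no obstacle: once one has the incidence algebra invertibility of $\xi$ (which Theorem \ref{t3} provides via the general Theorem \ref{l1}), the inversion formula is purely formal, being the module-level shadow of $\xi \star \mu = \epsilon$. The only thing to double-check is that the right $[\mathbb{I}_{(C_0, \leq)}, R]$-module structure on $[C_{0, \geq a}, R]$ is indeed associative, which is the same bookkeeping argument as for posets in the previous section and uses only the local finiteness of $(C_0, \leq)$ to ensure all sums involved are finite.
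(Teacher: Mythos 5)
Your proof is correct and follows exactly the route the paper intends: the paper gives no explicit proof of this corollary, merely noting beforehand that $[C_{0,\geq a},R]$ is a right $[\mathbb{I}_{(C_0,\leq)},R]$-module so that the inversion is the module-level shadow of $\xi\star\mu=\mu\star\xi=\epsilon$, which is precisely the formal argument you spell out.
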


Recall that if $C$ and $D$, then objects in the product category $C\times D$ are pairs $(a,b)$ with $a \in C$ and $b\in D.$ Morphisms in $C\times D$ are given by
$$C\times D((a_1, b_1), (a_2,b_2)) \ = \
C(a_1, a_2) \times D(b_1,b_2) .$$

\begin{prop}{\em Let $C$ and $D$ be locally finite categories, then
the product $C\times D$ is also a locally finite category, and we have a natural isomorphism of posets
$$\mathbb{I}_{((C\times D)_0,\leq)} \ \simeq \ \mathbb{I}_{(C_0,\leq)} \times \mathbb{I}_{(D_0,\leq)},$$
and thus a natural isomorphism of algebras
$$([\mathbb{I}_{((C\times D)_0,\leq)}, R], \star)  \ \simeq \ ([\mathbb{I}_{(C_0,\leq)}, R], \star) \otimes
 ([\mathbb{I}_{(D_0,\leq)}, R], \star).$$
Moreover, we have that $$ \xi_{C \times D}[(a_1,b_1),(a_2,b_2)] \ = \ \xi_C [a_1,a_2]\xi_D[b_1,b_2]$$ and thus
$$\mu_{C \times D}[(a_1,b_1),(a_2,b_2)] \ = \ \mu_C [a_1,a_1]\mu_D[b_1,b_2].$$
}
\end{prop}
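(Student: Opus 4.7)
The plan is to reduce everything to the poset result already established in Proposition \ref{ww}, via Lemma \ref{order} which characterizes locally finite categories in terms of the associated partial order on objects.

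First I would verify that $C \times D$ is locally finite by checking the three conditions of Definition \ref{loc}. The morphism sets are finite since $(C \times D)((a_1,b_1),(a_2,b_2)) = C(a_1,a_2) \times D(b_1,b_2)$ is a product of finite sets. Any diagram $(a_1,b_1) \to (a_2,b_2) \to (a_1,b_1)$ in $C \times D$ projects componentwise to diagrams $a_1 \to a_2 \to a_1$ in $C$ and $b_1 \to b_2 \to b_1$ in $D$, forcing $a_1 = a_2$ and $b_1 = b_2$ by local finiteness of $C$ and $D$. Finally, the interval $[(a_1,b_1),(a_2,b_2)]$ in $(C\times D)_0$ injects into $[a_1,a_2] \times [b_1,b_2]$ via $(c,d) \mapsto (c,d)$, hence is finite.

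Next I would identify the induced partial order. By Lemma \ref{order}, $(a_1,b_1) \leq (a_2,b_2)$ in $(C\times D)_0$ if and only if there is a morphism $(a_1,b_1) \to (a_2,b_2)$ in $C\times D$, which happens if and only if there exist morphisms $a_1 \to a_2$ in $C$ and $b_1 \to b_2$ in $D$, i.e. $a_1 \leq a_2$ in $C_0$ and $b_1 \leq b_2$ in $D_0$. Thus $((C\times D)_0, \leq)$ is exactly the product poset $(C_0,\leq) \times (D_0,\leq)$, and Proposition \ref{ww} immediately yields the bijection $\mathbb{I}_{((C\times D)_0,\leq)} \simeq \mathbb{I}_{(C_0,\leq)} \times \mathbb{I}_{(D_0,\leq)}$ together with the induced isomorphism of incidence algebras.

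For the adjacency map, I would compute directly:
\[
\xi_{C\times D}[(a_1,b_1),(a_2,b_2)] = |(C\times D)((a_1,b_1),(a_2,b_2))| = |C(a_1,a_2)|\,|D(b_1,b_2)| = \xi_C[a_1,a_2]\,\xi_D[b_1,b_2].
\]
Under the tensor product identification of incidence algebras, this says $\xi_{C\times D}$ corresponds to $\xi_C \otimes \xi_D$. Since inversion in a tensor product of algebras satisfies $(u \otimes v)^{-1} = u^{-1} \otimes v^{-1}$ for units $u, v$, and since $\xi_C$, $\xi_D$ are invertible by Theorem \ref{t3}, we obtain $\mu_{C\times D} = \mu_C \otimes \mu_D$, i.e.
\[
\mu_{C\times D}[(a_1,b_1),(a_2,b_2)] = \mu_C[a_1,a_2]\,\mu_D[b_1,b_2].
\]
There is no real obstacle here; the only subtle point is confirming that the order arising from morphisms in $C \times D$ really coincides with the product order on $C_0 \times D_0$, which Lemma \ref{order} handles cleanly. (I note the statement as printed contains an apparent typo writing $\mu_C[a_1,a_1]$ where $\mu_C[a_1,a_2]$ is intended; the proof above yields the corrected identity.)
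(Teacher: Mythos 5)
Your proposal is correct, and it supplies exactly the argument the paper intends but omits: the paper states this proposition with no proof, and the evident route is the one you take, namely checking the three conditions of Definition \ref{loc} for $C\times D$, using Lemma \ref{order} to identify $((C\times D)_0,\leq)$ with the product poset $(C_0,\leq)\times(D_0,\leq)$, and then invoking Proposition \ref{ww} together with $|(C\times D)((a_1,b_1),(a_2,b_2))|=|C(a_1,a_2)|\,|D(b_1,b_2)|$ and multiplicativity of inversion under $\otimes$. You are also right that $\mu_C[a_1,a_1]$ in the statement is a typo for $\mu_C[a_1,a_2]$ (the same slip appears in the neighbouring product propositions). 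The only caveat worth recording is that Proposition \ref{ww} is stated for \emph{finite} posets; for genuinely infinite locally finite categories the algebra isomorphism requires the completed tensor product, as the paper's own Remark following Proposition \ref{ww} points out, so either restrict to finite categories or read $\otimes$ as completed.
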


\begin{exmp}{\em Let $C$ be a locally finite category, $a \in C,$ and $F:C \longrightarrow \mathrm{set}$ be a functor from $C$ to the category of finite sets.
Consider the functor $G:C \longrightarrow \mathrm{set}$ given on objects by
$$G(y ) \ = \ \bigsqcup_{a \leq x \leq y} F(x)\times C(x,y ).$$
By the  M$\ddot{\mbox{o}}$bius inversion formula we have that
$$|F(y)|\ = \ \sum_{a \leq x \leq y} |G(x)|\mu(x,y) .$$  }
\end{exmp}

\section{Essentially Locally Finite Categories}

The  M$\ddot{\mbox{o}}$bius theory for locally finite categories developed in Section \ref{s1} although functorial under isomorphisms of categories, fails
to be functorial under equivalences of categories. We recall that categories $C$ and $D$ are equivalent if there is a functor $F:C \longrightarrow D$ that is essentially
surjective, full and faithful \cite{mac}. A category may fail to be locally finite and yet be equivalent to a locally finite category. For example, the category with two objects, a unique isomorphism between
them, and identities as the only endomorphism, is equivalent to the category with one object and one morphism. The latter is locally finite whereas the former is not.\\

Given a category $C$ and objects $x,y \in C$, the notation $x \simeq y$ means that $x$ and $y$ are isomorphic objects. Let $\overline{C}$ be set of
isomorphism classes of objets in $C$, i.e. $\overline{C}$ is the quotient set $C_0 / \simeq$. A typical element of $\overline{C}$ is denoted
by $\overline{x}$, meaning that we have an equivalence class $\overline{x} \in \overline{C}$ and that we have chosen a representative object
$x \in \overline{x}.$ For $\overline{x}, \overline{y} \in \overline{C}$ we set
$$[\overline{x}, \overline{y}] \ = \ \{\overline{z} \in \overline{C} \ | \ \mbox{there is a diagram}  \ \  x \longrightarrow z \longrightarrow y \ \ \mbox{in}\  C \ \} .$$

\begin{defn}\label{tr}
{\em A category $C$ is essentially locally finite if the following conditions hold:
\begin{itemize}
\item $C(x,y)$ is a finite set for $x,y \in C.$
\item If we have a diagram $x {\longrightarrow} y {\longrightarrow}x \ $ in $\ C$, then $x\simeq y.$
\item For $x,y \in C$, the set $[\overline{x},\overline{y}]$ is finite.
\end{itemize}
}
\end{defn}

\begin{rem}{\em In
the applications we often find a stronger version of the second property in Definition \ref{tr}: the arrows in any configuration
$x {\longrightarrow} y {\longrightarrow}x$ are isomorphisms. We call categories with such a property  isocyclic, i.e. all cycles of morphisms in $C$
are formed by isomorphisms. Not all essentially locally finite categories are isocyclic, e.g. a finite monoid regarded as a category.
}
\end{rem}

\begin{lem}\label{qorder}
{\em Let $C$ be a category with $C(x,y)$ finite  for  $x,y \in C.$ Let $\leq$ be the relation on $\overline{C}$ given by $\overline{x}\leq \overline{y} \ $ if and only if there is a morphism $x \longrightarrow y$ in $C$.
Then $C$ is an essentially  locally finite category if and only if
$\ \leq \ $ is a locally finite partial order on $\overline{C}.$
}
\end{lem}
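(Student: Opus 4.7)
The plan is to mimic the proof of Lemma \ref{order}, substituting the set $C_0$ by the quotient $\overline{C}=C_0/{\simeq}$ throughout, and handling the extra bookkeeping needed to show that the relation $\leq$ descends to a well-defined relation on isomorphism classes. Concretely, I would begin by checking well-definedness: if $x \simeq x'$, $y \simeq y'$, and there is a morphism $f: x \to y$, then composing with the given isomorphisms $x' \to x$ and $y \to y'$ produces a morphism $x' \to y'$. Hence the condition ``there exists a morphism $x \to y$'' depends only on the classes $\overline{x}, \overline{y}$, so $\leq$ is indeed a relation on $\overline{C}$.

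For the forward direction, assume $C$ is essentially locally finite in the sense of Definition \ref{tr}. Reflexivity of $\leq$ is witnessed by identities, and transitivity by composition of morphisms. For antisymmetry, if $\overline{x} \leq \overline{y}$ and $\overline{y} \leq \overline{x}$ then there are morphisms $x \to y$ and $y \to x$, forming a configuration $x \to y \to x$; the second clause of Definition \ref{tr} gives $x \simeq y$, so $\overline{x} = \overline{y}$. To see local finiteness, observe that the poset interval
$$\{\overline{z} \in \overline{C} \ | \ \overline{x} \leq \overline{z} \leq \overline{y}\}$$
coincides with the set $[\overline{x},\overline{y}]$ from Definition \ref{tr}, which is finite by the third clause.

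For the converse, suppose $\leq$ is a locally finite partial order on $\overline{C}$. The finiteness of each hom-set $C(x,y)$ is already a hypothesis. Given a diagram $x \to y \to x$ in $C$, we obtain $\overline{x} \leq \overline{y} \leq \overline{x}$, and antisymmetry forces $\overline{x} = \overline{y}$, i.e., $x \simeq y$, which is precisely the second clause of Definition \ref{tr}. For the third clause, the set $[\overline{x},\overline{y}]$ equals the poset interval, which is finite by assumption.

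The only mildly delicate point—which I would not call a serious obstacle—is confirming the identification between the set $[\overline{x},\overline{y}]$ as introduced before Definition \ref{tr} and the order-theoretic interval, so as not to create a circularity between ``there exists a diagram $x \to z \to y$ in $C$'' (a condition on chosen representatives) and ``$\overline{x} \leq \overline{z} \leq \overline{y}$'' (a condition on classes). This is handled by the same well-definedness argument used at the start: the existence of such a diagram is invariant under replacing $z$ by any isomorphic $z'$, so the two descriptions agree.
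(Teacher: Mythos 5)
Your proposal is correct and follows essentially the same route as the paper's (very terse) proof: reflexivity and transitivity from identities and composition, antisymmetry matched to the second clause of Definition \ref{tr}, and local finiteness matched to the third clause via the identification of the order-theoretic interval with $[\overline{x},\overline{y}]$. The well-definedness check and the interval identification you spell out are exactly the details the paper leaves implicit, and they are handled correctly.
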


\begin{proof} Reflexivity and transitivity of $\leq$ are obvious. Antysymmetry is equivalent to the second property of Definition \ref{tr}. The third property in Definition \ref{tr} is equivalent to local finiteness.
\end{proof}

\begin{prop}{\em A category $C$ is essentially locally finite if and only if $C$ is equivalent to a locally finite category.
}
\end{prop}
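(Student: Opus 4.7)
The strategy is to prove the two implications separately, using a skeleton of $C$ as the locally finite category witnessing the equivalence in the forward direction, and transporting the three defining properties along the equivalence in the backward direction. I will lean on Lemma \ref{qorder} and Lemma \ref{order}, together with the Remark that a locally finite category is skeletal, so that the relevant posets on $C_0$ and $\overline{C}$ are directly comparable.

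For the forward direction, assume $C$ is essentially locally finite. I would choose a representative $x \in \overline{x}$ for each $\overline{x} \in \overline{C}$ and let $C'$ be the corresponding full subcategory. Standard category theory (invoking the axiom of choice) gives that the inclusion $C' \hookrightarrow C$ is an equivalence. It remains to verify the three items of Definition \ref{loc} for $C'$. Finiteness of $C'(x,y) = C(x,y)$ is immediate. If $x {\to} y {\to} x$ in $C'$, then the same diagram lies in $C$, so $x \simeq y$ by the second clause of Definition \ref{tr}; but $C'$ is skeletal by construction, forcing $x = y$. Finally $[x,y]$ in $C'_0$ is in bijection with $[\overline{x},\overline{y}] \subseteq \overline{C}$, which is finite by the third clause of Definition \ref{tr}.

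For the backward direction, suppose $F \colon C \to D$ is an equivalence with $D$ locally finite. I would verify the three clauses of Definition \ref{tr} for $C$. Since $F$ is fully faithful, $C(x,y) \simeq D(Fx, Fy)$, which is finite. Given a diagram $x {\to} y {\to} x$ in $C$, its image $Fx {\to} Fy {\to} Fx$ lies in the locally finite category $D$, so $Fx = Fy$; because $F$ is fully faithful it reflects isomorphisms, and $1_{Fx}$ witnesses $Fx \simeq Fy$, so $x \simeq y$. For the last clause, essential surjectivity plus fully faithfulness produces a bijection $\overline{F} \colon \overline{C} \to \overline{D}$ sending $[\overline{x},\overline{y}]$ to $[\overline{Fx},\overline{Fy}]$; since $D$ is locally finite it is skeletal (by the Remark), so $\overline{D} = D_0$ and $[\overline{Fx},\overline{Fy}] = [Fx,Fy]$, which is finite by Definition \ref{loc}.

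The main obstacle is the second clause in the backward direction: one must take care that $Fx = Fy$ in $D$ really does imply $x \simeq y$ in $C$, and this uses both that $F$ is fully faithful (hence reflects isomorphisms) and that any object is canonically isomorphic to itself. A secondary bookkeeping point is the identification of intervals under an equivalence, which relies on the equality $\overline{D}=D_0$ for the skeletal locally finite category $D$; without this identification the finiteness of $[\overline{x},\overline{y}]$ in $\overline{C}$ would not follow directly from local finiteness of $D_0$.
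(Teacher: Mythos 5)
Your proof is correct and follows essentially the same route as the paper: the forward direction passes to a skeleton (a full subcategory on chosen representatives) and the backward direction transports the three defining conditions along the equivalence. You are somewhat more explicit than the paper in verifying the conditions of Definition \ref{loc} for the skeleton and in handling the interval bijection, but the argument is the same.
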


\begin{proof} Assume $C$ is an essentially locally finite category. Let $S$ be a full subcategory of $C$ whose objects include one and only one representative
of each isomorphism class of $C$. The category $S$ is equivalent to $C$ to and skeletal, thus it is a locally finite category. \\

Conversely, let $S$ be a locally finite category and  $F:S \longrightarrow C$ an equivalence of categories.
For objects $y_1, y_2 \in C$ there are objects $x_1, x_2 \in  S$ such that
$$F(x_1)\ \simeq \ y_1 \ \ \ \ \ \mbox{and} \ \ \ \ \ F(x_2)\ \simeq \ y_2.$$ Moreover, we have
bijective maps
$$C(x_1, x_2) \longrightarrow C(y_1, y_2) \ \ \ \ \ \ \mbox{and} \ \ \ \ \ \ [x_1, x_2] \longrightarrow [y_1, y_2].$$
Thus $C(y_1, y_2)$ and $[y_1, y_2]$ are finite sets. If there is a diagram $y_1 \longrightarrow y_2 \longrightarrow y_1$ in $C$, then there is a corresponding
diagram $x_1 \longrightarrow x_2 \longrightarrow x_1$ in $S$, thus $x_1 = x_2$ and $y_1 \simeq y_2.$

\end{proof}

\begin{exmp}{\em Let $C$ be a subcategory of the category  of finite sets and maps. Let $IC$ be the subcategory of $C$ with the same objects as $C$ and such that $f\in IC(x,y) \subseteq C(x,y)$ if and only if $f$ is an injective map from $x$ to $y$. Dually, let $SC$ be the subcategory of $C$ with the same objects as $C$ and such that $f\in SC(x,y) \subseteq C(x,y)$ if and only if $f$ is a surjective map. The categories
$IC$ and $SC$ are isocyclic and essentially locally finite categories.
}
\end{exmp}

Let $\mathbb{I}$ be the category of finite sets and injective maps. A combinatorial presheaf $P$ is contravariant functor
$P:\mathbb{I} \longrightarrow \mathrm{set} $. The Grothendieck category of elements $\mathbb{I}_P$   has for objects pairs $(x,a)$ where $a \in Px.$
A morphism $$(x,a) \longrightarrow (y,b)\  \in \ \mathbb{I}_P$$ is an injective map $f: x \longrightarrow y$ such that $$b|_x= P_fb=a.$$

\begin{prop}{\em
For any combinatorial presheaf $P$ the category of elements $\mathbb{I}_P$
is isocyclic and essentially locally finite.}
\end{prop}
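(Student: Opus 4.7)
The plan is to verify the three clauses of Definition \ref{tr} together with isocyclicity, all of which follow from the rigidity of injective maps between finite sets and the contravariance of $P$.

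The first clause, finiteness of $\mathbb{I}_P((x,a),(y,b))$, is immediate: such a morphism is in particular an injection $x \to y$, and there are only finitely many injections between finite sets.

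For isocyclicity I would start from a cycle $(x,a) \xrightarrow{f} (y,b) \xrightarrow{g} (x,a)$ in $\mathbb{I}_P$. The underlying maps compose to an injection $g \circ f : x \to x$ of a finite set into itself, hence a bijection; similarly $f \circ g$ is a bijection. Consequently $f$ and $g$ are themselves bijections of finite sets with $|x|=|y|$. To promote $f$ to an isomorphism in $\mathbb{I}_P$ I need to exhibit its set-theoretic inverse as a morphism: since $f \circ f^{-1} = \mathrm{id}_y$ and $P$ is contravariant, $P(f^{-1})\circ P(f) = P(\mathrm{id}_y) = \mathrm{id}_{Py}$, so $P(f^{-1})(a) = P(f^{-1})(P(f)(b)) = b$, which makes $f^{-1}:(y,b) \to (x,a)$ a morphism in $\mathbb{I}_P$. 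Thus every cycle in $\mathbb{I}_P$ consists of isomorphisms, yielding isocyclicity and subsuming the second clause of Definition \ref{tr}.

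For the third clause I would show that every class in $[\overline{(x,a)}, \overline{(y,b)}]$ admits a representative of the form $(z'', P(\iota)b)$ with $z'' \subseteq y$ and $\iota:z'' \hookrightarrow y$ the inclusion. Given $(z,c) \in [\overline{(x,a)}, \overline{(y,b)}]$ with morphism $j:(z,c) \to (y,b)$, I factor $j = \iota \circ j_0$ through its image, where $z'' = j(z)$ and $j_0:z \to z''$ is the induced bijection. Setting $c'' = P(\iota)b \in Pz''$, the contravariance gives $P(j_0)c'' = P(\iota \circ j_0)b = P(j)b = c$, so $j_0:(z,c) \to (z'',c'')$ is a morphism, and the isocyclicity argument above promotes the bijection $j_0$ to an isomorphism in $\mathbb{I}_P$. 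Since $y$ is finite, it has only finitely many subsets, and for each subset $z''$ the second coordinate $P(\iota)b$ is determined by $b$, so $[\overline{(x,a)}, \overline{(y,b)}]$ is finite.

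The main obstacle is this last step: one must notice that the interval is controlled not by arbitrary objects $(z,c)$ but by sub-objects sitting inside $(y,b)$, and this requires first having isocyclicity in hand to turn the corestriction $j_0$ into an honest isomorphism of $\mathbb{I}_P$. The remaining verifications are short once one is comfortable tracking how contravariance of $P$ interacts with $f \mapsto f^{-1}$.
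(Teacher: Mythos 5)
Your proposal is correct and follows essentially the same route as the paper: isocyclicity comes from the fact that injections between finite sets composing to an endo-injection must all be bijections (and the set-theoretic inverse is a morphism of $\mathbb{I}_P$ by contravariance), and the interval $[\overline{(x,a)},\overline{(y,b)}]$ is finite because every class has a representative $(z'', b|_{z''})$ with $z''$ a subset of $y$. You have merely spelled out the details that the paper's terse proof leaves implicit, in particular the image factorization $j=\iota\circ j_0$ and the verification that $P(f^{-1})$ carries $a$ back to $b$.
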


\begin{proof}A composition of maps between finite sets is a bijection if and only if the maps are bijections. Whenever we have an element $\overline{(z,c)}$ in an interval $[\overline{(x,a)},\overline{(y,b)}]$ of $\overline{\mathbb{I}}_P$ we may assume, using isomorphic representations, that $$x \subseteq z \subseteq y, \ \ \ \ \ a=c|_x c= b|_z \ \ \ \ \ \mbox{and} \ \ \ c= b|_z .$$ Thus there is only a finite number of choices for $\overline{(z,c)}$ and $\mathbb{I}_P$ is an essentially locally finite category.
\end{proof}

Let $\mathbb{B}$ be the category of finite sets and bijections, and let $O$ be an operad in the category of finite sets. Thus $O$ is a functor $$O: \mathbb{B}_+ \longrightarrow \mathrm{set},$$
from the category $\mathbb{B}_+$ of non-empty finite sets and bijections to the category of finite sets, together with a distinguished element $1 \in O[1]$ and suitable composition maps
$$m_{\pi} : O(\pi)\times \prod_{b \in \pi}O(b) \longrightarrow O(x) ,$$ where $\pi$ is a partition  of the finite set $x$. The reader may consult \cite{re2, lo} and the references there in for details. Let $\mathbb{S}_O$ be the category whose objects
are finite sets, and such that a morphism $(f,a)\in \mathbb{S}_O(x,y)$ is a  surjective map $f:x\longrightarrow y$ together with an element $$a \in \prod_{j \in y}O(f^{-1}j).$$
Composition of morphisms is defined with the help of the operadic compositions as follows. Suppose we have morphisms
$$x \overset{(f,a)} \longrightarrow y \overset{(g,b)} \longrightarrow z \ \ \ \ \  \mbox{ where}$$
\begin{itemize}
\item $f:x \longrightarrow y \ $ is a surjective map, and $\ a=(a_j)_{j \in y} \ $ with
 $\ a_j \in O(f^{-1}j),  $

\item $g:y \longrightarrow z \ $ is a surjective map, and $\ b=(b_k)_{k \in z} \ $ with
$\ b_k \in O(g^{-1}k).$
\end{itemize}
The composition $$(g,b) \circ (f,a)= (gf, ba)$$ is such that $gf:x \longrightarrow z$ is the composition map, and for $k \in z$ the element $(ba)_k\in O((gf)^{-1}k)$ is defined as follows. Since
$$(gf)^{-1}k  \ = \  \bigsqcup_{j\in g^{-1}k}f^{-1}j\ \ \ \ \mbox{we get a map}$$
$$m : O(\{f^{-1}j\}_{j \in g^{-1}k})\times \prod_{j \in g^{-1}k}O(f^{-1}j) \longrightarrow O((gf)^{-1}k) ,$$
or equivalently a map
$$ m : O(g^{-1}k)\times \prod_{j \in g^{-1}k}O(f^{-1}j) \longrightarrow O((gf)^{-1}k).$$
We let $(ba)_k \in O((gf)^{-1}k) $ be given by
$$(ba)_k \ = \ m(b_k \ , \ (a_j)_{j \in g^{-1}k}) .$$

\begin{prop}{\em
For any operad $O$ of finite sets the category $\mathbb{S}_O$  is isocyclic and essentially locally finite.}
\end{prop}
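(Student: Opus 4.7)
The plan is to verify the three bullets of Definition~\ref{tr} for $\mathbb{S}_O$, together with the stronger isocyclic property. First I would handle the finiteness of hom-sets: since $x,y$ are finite, the collection of surjections $f\colon x\to y$ is a finite set, and for each such $f$ the product $\prod_{j\in y}O(f^{-1}j)$ is finite because $O$ takes values in finite sets. Hence $\mathbb{S}_O(x,y)$ is a finite disjoint union of finite products of finite sets, therefore finite.

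Next I would analyze the cycle condition. Given any configuration $x \overset{(f,a)}\longrightarrow y \overset{(g,b)}\longrightarrow x$ in $\mathbb{S}_O$, the underlying surjections force $|x|\geq|y|\geq|x|$, so $|x|=|y|$ and both $f$ and $g$ are bijections. Using the distinguished unit $1\in O[1]$ and the canonical bijection between $\{j\}$ and $[1]$, for any bijection $h\colon u\to v$ between finite sets the pair $\big(h,(1)_{j\in v}\big)$ defines a morphism in $\mathbb{S}_O(u,v)$ whose composition in either order with $\big(h^{-1},(1)_{k\in u}\big)$ equals the identity by the operadic unit axiom $m(1,1)=1$. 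This shows that any two finite sets of the same cardinality are isomorphic in $\mathbb{S}_O$, and in particular yields $x\simeq y$ for the cycle above. For the isocyclic strengthening I would invert an arbitrary $(f,a)$ with $f$ a bijection by setting the candidate inverse to be $(f^{-1},a')$, where $a'_k\in O(\{f(k)\})\simeq O[1]$ is the two-sided inverse of $a_{f(k)}$ under the monoid structure that the operadic composition $m\colon O[1]\times O[1]\to O[1]$ induces on $O[1]$.

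For the interval condition, I would observe that if $x\longrightarrow z\longrightarrow y$ is any diagram in $\mathbb{S}_O$ then surjectivity of the underlying maps gives $|y|\leq|z|\leq|x|$. Combined with the previous paragraph's observation that objects of the same cardinality are isomorphic in $\mathbb{S}_O$, the class $\overline{z}$ is determined by $|z|$, so the interval $[\overline{x},\overline{y}]$ injects into the finite set $\{|y|,|y|+1,\ldots,|x|\}$ and is therefore finite.

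The hard part is the isocyclic claim, which requires every element of the unary part $O[1]$ to be invertible under the operadic unary composition. The clean way to handle this is to note that for a bijection the fiber $f^{-1}j$ is a singleton, so the operadic data lives in a copy of the monoid $O[1]$; hence the analysis reduces to showing that the relevant elements are units there. In the reduced case $O[1]=\{1\}$ this is automatic and the construction above literally produces the inverse; in the general case one invokes the corresponding monoid structure. The other two conditions, by contrast, are purely combinatorial consequences of working with surjections between finite sets and finite operad values, and present no real obstacle.
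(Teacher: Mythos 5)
Your treatment of the three conditions of Definition \ref{tr} is correct and, for the interval condition, actually tighter than the paper's own argument: the paper's proof speaks of elements $\overline{(z,c)}$ and of ``$z$ being a partition of $x$,'' which reads as a leftover from the preceding proposition about $\mathbb{I}_P$ (objects of $\mathbb{S}_O$ are bare finite sets), whereas your observation that any two finite sets of the same cardinality are isomorphic in $\mathbb{S}_O$ via a bijection decorated with units, so that $[\overline{x},\overline{y}]$ injects into $\{|y|,|y|+1,\dots,|x|\}$, is clean and complete. The finiteness of the hom-sets and the cycle condition $x\simeq y$ are also fine.

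The gap is exactly where you flagged it, and your closing sentence does not close it. For $(f,a)$ with $f$ a bijection to be invertible in $\mathbb{S}_O$ you need each $a_j$, viewed in the finite monoid $O[1]$ under the unary composition $m\colon O[1]\times O[1]\to O[1]$, to be a two-sided unit; ``invoking the corresponding monoid structure'' proves nothing, since a finite monoid need not be a group. Concretely, for the endomorphism operad of a two-element set $X$ one has $O[1]=X^X$, and the morphism $(\mathrm{id}_{[1]},a)$ with $a$ a non-injective map sits in a cycle $[1]\to[1]\to[1]$ yet has no inverse; so isocyclicity genuinely fails without a hypothesis such as $O[1]=\{1\}$ or $O[1]$ a group. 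You should either add such a hypothesis or prove only the ``essentially locally finite'' half, which your argument does establish. For what it is worth, the paper's own proof is silent on this point as well: it checks only that the underlying set maps in a cycle are bijections and never addresses the invertibility of the operadic decorations, so you have isolated a step the paper glosses over rather than one you alone are missing.
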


\begin{proof} Again a composition of maps between finite sets is a bijection if and only if the maps  are bijections. Whenever we have an element $\overline{(z,c)}$ in an interval $[\overline{(x,a)},\overline{(y,b)}]$ of $\overline{\mathbb{S}}_O$ we may assume that $z$ is a partition of $x$, using isomorphic representations, thus there is only a finite number of choices for $\overline{(z,c)}$ and $\mathbb{S}_O$ is a locally finite category.
\end{proof}

For any essentially locally finite category $C$ we have the partially order set $(\overline{C},\leq)$ and the incidence algebra
$([\mathbb{I}_{(\overline{C},\leq)}, R], \star)$.
The incidence or adjacency map $\xi:\overline{C} \times \overline{C}  \longrightarrow R$ of $C$  $$\xi(\overline{x},\overline{y})=|C(x,y)|$$
lives naturally in $[\mathbb{I}_{(\overline{C},\leq)}, R].$

\begin{thm}\label{t3}{\em \textbf{(M$\ddot{\mbox{o}}$bius Function for Essentially Locally Finite Categories)}\\
Let $C$ be an essentially locally finite category. The adjacency map $\xi$ is
invertible in $[\mathbb{I}_{(\overline{C}, \leq)}, R];$  its inverse $\mu$, called the  M$\ddot{\mbox{o}}$bius function of $C$,
is such that $$\mu[\overline{x},\overline{x}] \ = \ \frac{1}{|C(x,x)|}$$ and for $\ \overline{x} \neq \overline{y} \ $ in $C$ we have that
$$\mu[\overline{x},\overline{y}]\ = \
\sum_{n \geq 1}\sum_{\overline{x} \ = \
\overline{x}_0 < \overline{x}_1 < ... < \overline{x}_n=\overline{y}}
(-1)^n \frac{\ |C(x_0, x_1)|.................|C(x_{n-1}, x_n)|}{|C(x_0,x_0)||C(x_1,x_1)|.......|C(x_{n-1},x_{n-1})||C(x_n,x_n)|}.$$
}
\end{thm}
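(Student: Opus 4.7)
The plan is to reduce the statement to Theorem \ref{l1} applied to the locally finite poset $(\overline{C}, \leq)$ provided by Lemma \ref{qorder}. First I would check that $\xi(\overline{x}, \overline{y}) = |C(x,y)|$ is well defined on isomorphism classes: any choice of isomorphisms $x \simeq x'$ and $y \simeq y'$ in $C$ induces, by pre- and post-composition, a bijection $C(x,y) \to C(x',y')$, so the cardinality is independent of the chosen representatives. Hence $\xi$ genuinely lives in $[\mathbb{I}_{(\overline{C}, \leq)}, R]$, i.e., it is an element of the reduced incidence algebra.

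Next I would observe that $\xi[\overline{x},\overline{x}] = |C(x,x)| \geq 1$, since $C(x,x)$ always contains the identity $1_x$, and hence is a unit in $R$ under the characteristic zero convention adopted in the introduction. Theorem \ref{l1} then applies directly to the locally finite poset $(\overline{C}, \leq)$, yielding the invertibility of $\xi$ together with the closed form
$$\mu[\overline{x}, \overline{y}] \ = \ \sum_{n \geq 1} \sum_{\overline{x} = \overline{x}_0 < \cdots < \overline{x}_n = \overline{y}} (-1)^n \, \frac{\xi[\overline{x}_0, \overline{x}_1] \cdots \xi[\overline{x}_{n-1}, \overline{x}_n]}{\xi[\overline{x}_0,\overline{x}_0] \cdots \xi[\overline{x}_n, \overline{x}_n]}$$
for its inverse. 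Substituting $\xi[\overline{x}_i, \overline{x}_{i+1}] = |C(x_i, x_{i+1})|$ and $\xi[\overline{x}_i, \overline{x}_i] = |C(x_i, x_i)|$ recovers the formula in the statement, while the diagonal case $\mu[\overline{x}, \overline{x}] = 1/|C(x,x)|$ is the standard first step of the Möbius recursion.

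I do not anticipate any serious obstacle: the entire argument is a two-step translation. The only points requiring care are the well-definedness of $\xi$ on isomorphism classes, which is the functoriality of $\mathrm{Hom}$ under isomorphism, and the invertibility of the positive integers $|C(x,x)|$ in $R$, which is the standing characteristic zero hypothesis. The conceptual work has already been carried out in Lemma \ref{qorder}, which extracts the locally finite poset $(\overline{C}, \leq)$ from an essentially locally finite category, and in Theorem \ref{l1}, which supplies the general invertibility criterion and explicit inverse for incidence algebras of locally finite posets. The theorem is therefore essentially a corollary of these two results applied to the poset of isomorphism classes of $C$.
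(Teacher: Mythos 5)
Your proposal is correct and follows exactly the route the paper takes: the paper's own proof is the one-line remark that the result ``follows from Theorem \ref{l1}'', applied to the locally finite poset $(\overline{C},\leq)$ furnished by Lemma \ref{qorder}. You have simply made explicit the two implicit checks (well-definedness of $\xi$ on isomorphism classes and invertibility of $|C(x,x)|$ in $R$), which is a faithful expansion of the same argument.
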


\begin{proof}Follows from Theorem \ref{l1}.
\end{proof}

For $\overline{a} \in \overline{C}$, we have that $[\overline{C}_{\geq \overline{a}}, R]$ is a right $[\mathbb{I}_{(\overline{C}, \leq)}, R]$-module.

\begin{cor}\label{t9}{\em \textbf{(M$\ddot{\mbox{o}}$bius Inversion for Essentially Locally Finite Categories)}\\
Fix $\overline{a} \in \overline{C}$. For $f,g \in [\overline{C}_{\geq \overline{a}},R]$ we have that

$$g(\overline{y})\ = \ \sum_{\overline{a} \leq \overline{x} \leq \overline{\overline{y}}} f(\overline{x})|C(\overline{x},\overline{y})| \ \ \ \ \
\mbox{if and only if}\ \ \ \ \ f(y) \ = \ \sum_{\overline{a} \leq \overline{x} \leq \overline{y}} g(\overline{x})\mu(\overline{x},\overline{y}).$$

}
\end{cor}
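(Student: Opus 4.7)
The plan is to reduce this corollary to the Möbius inversion formula for locally finite posets, applied to the poset $(\overline{C}, \leq)$ produced by Lemma \ref{qorder}. Since the statement involves only the quotient set $\overline{C}$ of isomorphism classes and the cardinalities $|C(x,y)|$, no new categorical input is needed beyond what has already been established.

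First, I would invoke Lemma \ref{qorder} to ensure that $(\overline{C}, \leq)$ is a genuine locally finite poset, so that the incidence algebra $([\mathbb{I}_{(\overline{C},\leq)}, R], \star)$ makes sense; then Theorem \ref{t3} supplies the key ingredient, namely that the adjacency map $\xi$ is $\star$-invertible with inverse the Möbius function $\mu$ described there. Next I would transport the right-module structure from the poset setting: exactly as in the locally finite poset case, $[\overline{C}_{\geq \overline{a}}, R]$ becomes a right $[\mathbb{I}_{(\overline{C},\leq)}, R]$-module via
\[
(f \star h)(\overline{y}) \ = \ \sum_{\overline{a} \leq \overline{x} \leq \overline{y}} f(\overline{x})\, h[\overline{x}, \overline{y}],
\]
and the argument that this is well-defined and associative is the same line-by-line calculation as for posets (finite sums, reindexing by intermediate objects), since $(\overline{C}, \leq)$ is locally finite.

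With this machinery in place, the corollary becomes purely formal. The first displayed equation, using $\xi[\overline{x}, \overline{y}] = |C(x,y)|$, reads exactly as $g = f \star \xi$ in the right module $[\overline{C}_{\geq \overline{a}}, R]$. The second displayed equation reads exactly as $f = g \star \mu$. Since $\xi \star \mu = \epsilon$ (the $\star$-unit) by Theorem \ref{t3}, associativity of the action yields
\[
g = f \star \xi \ \Longrightarrow \ g \star \mu = f \star \xi \star \mu = f \star \epsilon = f,
\]
and conversely $f = g \star \mu$ gives $f \star \xi = g \star \mu \star \xi = g \star \epsilon = g$. This establishes the equivalence.

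There is essentially no obstacle: the only point requiring any care is checking that the bookkeeping with isomorphism classes does not affect the module structure, and this is immediate because by the second axiom of Definition \ref{tr} any interval in $\overline{C}$ is a finite set, so the sums defining $\star$ are finite and the usual proof of associativity goes through verbatim. The corollary is therefore a direct consequence of Theorem \ref{t3} together with the formal properties of the right action.
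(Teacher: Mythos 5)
Your proof is correct and matches the paper's (implicit) argument: the corollary is stated right after the observation that $[\overline{C}_{\geq \overline{a}}, R]$ is a right module over the incidence algebra of the locally finite poset $(\overline{C},\leq)$, and it follows formally from the invertibility of $\xi$ with inverse $\mu$ established in Theorem \ref{t3}, exactly as you describe. One tiny slip: the finiteness of intervals $[\overline{x},\overline{y}]$ comes from the \emph{third} condition of Definition \ref{tr} (equivalently, local finiteness via Lemma \ref{qorder}), not the second, which only supplies antisymmetry.
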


\begin{prop}{\em Let $C$ and $D$ be finite esentially locally finite categories, then
 $C\times D$ is a finite essentially locally finite category, and we have a natural isomorphism of posets
$$\mathbb{I}_{(\overline{C\times D},\leq)} \ \simeq \ \mathbb{I}_{(\overline{C},\leq)} \times \mathbb{I}_{(\overline{D},\leq)}$$
thus we have a natural isomorphism of algebras
$$([\mathbb{I}_{(\overline{C\times D},\leq)}, R], \star)  \ \simeq \ ([\mathbb{I}_{(\overline{C},\leq)}, R], \star) \otimes
 ([\mathbb{I}_{(\overline{D},\leq)}, R], \star).$$
Moreover, we have that $$ \xi_{C \times D}[\overline{(a_1,b_1)},\overline{(a_2,b_2)}] \ = \ \xi_C [\overline{a}_1,\overline{a}_2]\xi_D[\overline{b}_1,\overline{b}_2]$$ and thus
$$\mu_{C \times D}[\overline{(a_1,b_1)},\overline{(a_2,b_2)}] \ = \ \mu_C [\overline{a}_1,\overline{a}_2]\mu_D[\overline{b}_1,\overline{b}_2].$$
}
\end{prop}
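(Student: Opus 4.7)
The plan is to verify the three defining properties in Definition \ref{tr} for $C \times D$, then construct the isomorphism of posets explicitly, and finally deduce both the algebra isomorphism and the factorization of $\xi$ and $\mu$ by a purely formal argument.

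First I would check essential local finiteness of $C \times D$. The hom set $(C\times D)((a_1,b_1),(a_2,b_2)) = C(a_1,a_2) \times D(b_1,b_2)$ is finite since each factor is. Given a diagram $(a_1,b_1) \to (a_2,b_2) \to (a_1,b_1)$ in $C \times D$, projecting to each factor yields diagrams $a_1 \to a_2 \to a_1$ in $C$ and $b_1 \to b_2 \to b_1$ in $D$. By essential local finiteness of $C$ and $D$ we conclude $a_1 \simeq a_2$ and $b_1 \simeq b_2$, hence $(a_1,b_1) \simeq (a_2,b_2)$ via the product of the chosen isomorphisms. For the interval condition, I would establish the bijection
$$[\overline{(a_1,b_1)},\overline{(a_2,b_2)}] \ \longleftrightarrow \ [\overline{a_1},\overline{a_2}] \times [\overline{b_1},\overline{b_2}]$$
sending $\overline{(x,y)}$ to $(\overline{x},\overline{y})$. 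Surjectivity uses that if $a_1 \to x \to a_2$ and $b_1 \to y \to b_2$ are diagrams in $C$ and $D$, then the product diagram witnesses $\overline{(x,y)} \in [\overline{(a_1,b_1)},\overline{(a_2,b_2)}]$. Injectivity uses the factor-wise isomorphism lifting above. Finiteness of the right-hand side then forces finiteness on the left.

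The same map establishes the order isomorphism. Observing that $\overline{(a_1,b_1)} \leq \overline{(a_2,b_2)}$ in $\overline{C\times D}$ iff there is a morphism $(a_1,b_1) \to (a_2,b_2)$ in $C\times D$, iff there are morphisms $a_1 \to a_2$ in $C$ and $b_1 \to b_2$ in $D$, iff $\overline{a_1} \leq \overline{a_2}$ and $\overline{b_1} \leq \overline{b_2}$, we recover exactly the product order on $\overline{C} \times \overline{D}$. Applying Proposition \ref{ww} to $(\overline{C},\leq)$ and $(\overline{D},\leq)$ then yields the algebra isomorphism
$$([\mathbb{I}_{(\overline{C\times D},\leq)}, R], \star) \ \simeq \ ([\mathbb{I}_{(\overline{C},\leq)}, R], \star) \otimes ([\mathbb{I}_{(\overline{D},\leq)}, R], \star).$$

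Finally, the factorization of $\xi$ is immediate:
$$\xi_{C\times D}[\overline{(a_1,b_1)},\overline{(a_2,b_2)}] \ = \ |C(a_1,a_2) \times D(b_1,b_2)| \ = \ \xi_C[\overline{a_1},\overline{a_2}] \, \xi_D[\overline{b_1},\overline{b_2}],$$
which says that $\xi_{C\times D}$ corresponds to $\xi_C \otimes \xi_D$ under the isomorphism above. Since inverses in a tensor product of commutative algebras satisfy $(u \otimes v)^{-1} = u^{-1} \otimes v^{-1}$ whenever both $u,v$ are units, and $\xi_C, \xi_D$ are units by Theorem \ref{t3}, we conclude that $\mu_{C\times D}$ corresponds to $\mu_C \otimes \mu_D$, which is exactly the claimed factorization. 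The main obstacle is the interval bijection step: one must be careful that picking representatives of isomorphism classes in $\overline{C \times D}$ is compatible with picking representatives in each factor, which is where the second condition of Definition \ref{tr} (bridging component-wise isomorphism to isomorphism of pairs) becomes essential.
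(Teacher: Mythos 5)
Your proof is correct; the paper states this proposition without any proof at all, so there is nothing to compare it against, but your argument supplies exactly the verification one would want: the three conditions of Definition \ref{tr} for $C\times D$ (using that a morphism $(f,g)$ in $C\times D$ is an isomorphism precisely when $f$ and $g$ are, which is what makes the interval bijection and the second condition go through), the identification $\overline{C\times D}\simeq \overline{C}\times\overline{D}$ as posets with the product order, and then an appeal to Proposition \ref{ww} for the algebra isomorphism. One small correction: the incidence algebras here are not commutative, and commutativity is not needed for the last step --- in any tensor product of unital associative algebras one has $(u\otimes v)(u^{-1}\otimes v^{-1})=uu^{-1}\otimes vv^{-1}=1\otimes 1$, so the identity $(u\otimes v)^{-1}=u^{-1}\otimes v^{-1}$ holds without that hypothesis and your deduction of $\mu_{C\times D}=\mu_C\otimes\mu_D$ stands as written. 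Your closing worry about compatibility of representatives is also easily dispatched: the interval $[\overline{x},\overline{y}]$ depends only on the isomorphism classes of its arguments, since a diagram $x\to z\to y$ can be transported along isomorphisms of any of its three objects.
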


\

The formula for the M$\ddot{\mbox{o}}$bius function $\mu$ from Theorem \ref{t3}  admits a nice conceptual understanding in the case of isocyclic categories which we proceed
to formulate. We introduce first a few required mathematical notions. \\

The cardinality of finite sets can be viewed as an invariant under isomorphisms map $$|\ |:\mathrm{set} \longrightarrow \mathbb{N}$$ from the category of finite sets to the set of natural numbers, which satisfies $$|\emptyset|=0, \ \ \ \ \ |[1]|=1, \ \ \ \ \ |x \sqcup y|= |x| + |y|, \ \ \ \ \
 \mbox{and} \ \ \ \ \ |x\times y|=|x||y|.$$

The notion of cardinality for finite sets admits a suitable extension \cite{BaezDolan,Blan1,diaz} to the category $\mathrm{gpd}$ of essentially finite groupoids (i.e. groupoids equivalent to finite groupoids) via the invariant under equivalences map
$$| \ |_{\mathrm{g}}: \mathrm{gpd}\longrightarrow \mathbb{Q},$$   given by
$$|G|_{\mathrm{g}}\ = \ \sum_{x\in \overline{G}} \frac{1}{|G(x,x)|}.$$ We recall that a groupoid is category with all  morphisms invertible; a groupoid is essentially finite if it is equivalent to a groupoid with a finite number of morphism. The map $| \ |_{\mathrm{g}}$ is invariant under equivalence of groupoids and is such that
$$|\emptyset|=0, \ \ \ \ \ |[1]|=1, \ \ \ \ \ |G \sqcup H|= |G| + |H|, \ \ \ \ \  \mbox{and} \ \ \ \ \
|G\times H|=|G||H|.$$ Another useful property of $| \ |_{\mathrm{g}}$ is the following. Assume a finite group $G$ acts on the finite set $X$, then
we let $X\rtimes G$ be the groupoid  with set of objects $X$ and
such that $$X\rtimes G(x,y)\ = \ \{g\in G \ | \ gx=y \}.$$ It is easy to check that
$$|X\rtimes G|_{\mathrm{g}}\ = \ \frac{|X|}{|G|}.$$
Note that $\ \overline{X\rtimes G}\ $ is the  quotient set $X/G.$\\

Our immediate goal is to understand the M$\ddot{\mbox{o}}$bius function for isocyclic essentially locally finite categories in terms of the cardinality of groupoids, so we fix one of those categories $C$. For $\overline{x} \in \overline{C}$, the set of morphisms $C(x,x)$ from $x$ to itself is a group, moreover the group $C(x,x)\times C(x,x)$ acts on  $C(x,x)$ by pre and post composition of morphisms. We have that
$$\mu[\overline{x}, \overline{x}] \ = \ \frac{1}{|C(x,x)|}\ = \ \frac{|C(x,x)|}{\ |C(x,x)|^2}\ =  \ \left|C(x,x)\rtimes ({C(x,x)\times C(x,x)})\right|_{\mathrm{g}} .$$

For $\overline{x} < \overline{y}\ $ in $\overline{C}$, consider the groupoid  $\ [[0,n],C]_{\overline{x},\overline{y}}^{\natural}\ $ (the odd notation will be
justified below) whose objects are functors $$F:[0,n] \longrightarrow C $$ from the interval $[0,n] \subseteq \mathbb{N} \ $ to  $\ C$ such that $F(0) \simeq x, \ F(n) \simeq y$, and $F(i < i+1) $ is not an isomorphism for  $i \in [0,n-1]$.\\

Morphisms in $[[0,n],C]_{\overline{x},\overline{y}}^{\natural}\ $ are natural isomorphisms.
Concretely, objects in  $[[0,n],C]_{\overline{x},\overline{y}}^{\natural}\ $
are diagrams in $C$ of the form $$x \simeq x_0 \longrightarrow x_1 \longrightarrow ...... \longrightarrow x_n \simeq y,$$ where none of the arrows
 is an isomorphism. Morphisms in  $[[0,n],C]_{\overline{x},\overline{y}}^{\natural}$  are commutative diagrams
\[\xymatrix @R=.3in  @C=.4in
{x_0 \ar[r] \ar[d] &  x_1 \ar[r] \ar[d] & \ \ \ \ \ \ \ .... \ \ \ \ \ar[r] & x_{n-1}  \ar[r] \ar[d] & x_n \ar[d] \\
z_0 \ar[r]  &  z_1 \ar[r]  &  \ \ \ \ \ \ \ .... \ \ \ \ \ar[r] &  z_{n-1} \ar[r] & z_n} \]
where the vertical arrows are isomorphisms.\\

Let us compute the cardinality of the groupoid $[[0,n],C]_{\overline{x},\overline{y}}^{\natural}$. Note that
$$[[0,n],C]_{\overline{x},\overline{y}}^{\natural} \ \ = \ \
\bigsqcup_{\overline{x} \simeq \overline{x}_0 < \overline{x}_1 < ...... < \overline{x}_{n-1}< \overline{x}_n \simeq y} [[0,n],C]_{\overline{x}, \overline{x}_1, ...., \overline{x}_{n-1}, \overline{y}}$$ where the groupoids $\ [[0,n],C]_{\overline{x}, \overline{x}_1, ...., \overline{x}_{n-1}, \overline{y}} \ $ are defined just
as $\ [[0,n],C]_{\overline{x},\overline{y}}\ $ fixing beforehand the isomorphism classes of the intermediate objects $\overline{x}_1, ...., \overline{x}_{n-1}$.
The groupoids  $$[[0,n],C]_{\overline{x}, \overline{x}_1, ...., \overline{x}_{n-1}, \overline{y}}
$$ are actually quite easy to understand.
Objects and morphism in it are isomorphic to diagrams of the form
 \[\xymatrix @R=.4in  @C=1in
{x \ar[r]^{f_1} \ar[d]_{g_0} &  x_1 \ar[r]^{f_2\ \ \ \ \ } \ar[d]_{g_1} & \ \ \ \ \ \ \ .... \ \ \ \ \ar[r]^{\ \ f_{n-1}} & x_{n-1}  \ar[r]^{f_n} \ar[d]_{g_{n-1}} & x_n \ar[d]_{g_n} \\
x \ar[r]^{g_1f_1g_0^{-1}}  &  x_1 \ar[r]^{g_2f_2g_1^{-1}\ \ \ \ \ }  &  \ \ \ \ \ \ \ .... \ \ \ \ \ar[r]^{\ \ g_{n-1}f_{n-1}g_{n-2}^{-1}} &  x_{n-1} \ar[r]^{g_{n}f_{n}g_{n-1}^{-1}} & x_n} \]
for which the top horizontal arrows together with the vertical arrows uniquely determine the bottom arrows.\\

From this viewpoint is clear that we have an equivalence of groupoids
$$[[0,n],C]_{\overline{x}, \overline{x}_1, ...., \overline{x}_{n-1}, \overline{y}} \ \ \ \simeq \
 \ \  \left( \prod_{i=0}^{n-1} C(x_i, x_{i+1}) \right) \rtimes \left( \prod_{i=0}^{n} C(x_i, x_{i}) \right),$$
and thus we have that
$$\Big| [[0,n],C]_{\overline{x}, \overline{x}_1, ...., \overline{x}_{n-1}, \overline{y}} \Big|_{_{\mathrm{g}}} \ \ = \ \
\frac{|C(x_0, x_1)|.................|C(x_{n-1}, x_n)|}{|C(x_0,x_0)||C(x_1,x_1)|.......|C(x_{n-1},x_{n-1})||C(x_n,x_n)|} $$
and we have obtained the following result.

\begin{lem}\label{l3} {\em Let $C$ be an isocyclic essentially locally finite category and $\overline{x} < \overline{y}$ in $\overline{C}.$ We have that:
$$\mu[\overline{x},\overline{y}] \ = \
\sum_{n \geq 1}\sum_{\overline{x} =\overline{x}_0 < \overline{x}_1 < ... < \overline{x}_n=\overline{y}}
(-1)^n\Big| [[0,n],C]_{\overline{x}, \overline{x}_1, ...., \overline{x}_{n-1}, \overline{y}} \Big|_{_{\mathrm{g}}} \ = \
\sum_{n \geq 1}(-1)^n \Big|[[0,n],C]_{\overline{x},\overline{y}}^{\natural}\Big|_{_{\mathrm{g}}}  .$$
}
\end{lem}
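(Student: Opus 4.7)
The plan is to read off the first equality directly from Theorem \ref{t3} by recognizing each summand as a groupoid cardinality, and then deduce the second equality by assembling the groupoids $[[0,n],C]_{\overline{x},\overline{x}_1,\dots,\overline{x}_{n-1},\overline{y}}$ into the single groupoid $[[0,n],C]_{\overline{x},\overline{y}}^{\natural}$.

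For the first equality, I would start from the explicit formula
$$\mu[\overline{x},\overline{y}] \ = \ \sum_{n\geq 1}\sum_{\overline{x}=\overline{x}_0<\dots<\overline{x}_n=\overline{y}} (-1)^n \frac{|C(x_0,x_1)|\cdots|C(x_{n-1},x_n)|}{|C(x_0,x_0)|\cdots|C(x_n,x_n)|}$$
provided by Theorem \ref{t3}. Using the equivalence of groupoids
$$[[0,n],C]_{\overline{x},\overline{x}_1,\dots,\overline{x}_{n-1},\overline{y}}\ \simeq \ \Big(\prod_{i=0}^{n-1}C(x_i,x_{i+1})\Big)\rtimes \Big(\prod_{i=0}^{n}C(x_i,x_i)\Big)$$
already recorded in the paragraph preceding the lemma, together with the properties $|X\rtimes G|_{\mathrm{g}}=|X|/|G|$ and multiplicativity of $|\cdot|_{\mathrm{g}}$ under products, the ratio of morphism counts is exactly $\big|[[0,n],C]_{\overline{x},\overline{x}_1,\dots,\overline{x}_{n-1},\overline{y}}\big|_{\mathrm{g}}$, and the first equality follows term-by-term.

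For the second equality I would use the disjoint union decomposition
$$[[0,n],C]_{\overline{x},\overline{y}}^{\natural} \ = \ \bigsqcup_{\overline{x}=\overline{x}_0<\overline{x}_1<\dots<\overline{x}_{n-1}<\overline{x}_n=\overline{y}} [[0,n],C]_{\overline{x},\overline{x}_1,\dots,\overline{x}_{n-1},\overline{y}}$$
which is the key translation step. To justify this I need to check that in an isocyclic category the condition appearing in the definition of $[[0,n],C]_{\overline{x},\overline{y}}^{\natural}$---that none of the arrows $F(i<i+1)$ is an isomorphism---is equivalent to asking that the underlying chain of isomorphism classes $\overline{x}_0<\overline{x}_1<\dots<\overline{x}_n$ be strict. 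The nontrivial direction uses isocyclicity: if $x_i\simeq x_{i+1}$ via some isomorphism $g\colon x_{i+1}\to x_i$, then any morphism $f\colon x_i\to x_{i+1}$ together with $g$ forms a cycle $x_i\to x_{i+1}\to x_i$, which forces $f$ to be an isomorphism. Once the decomposition is established, additivity of $|\cdot|_{\mathrm{g}}$ under disjoint unions yields
$$\big|[[0,n],C]_{\overline{x},\overline{y}}^{\natural}\big|_{\mathrm{g}} \ = \ \sum_{\overline{x}=\overline{x}_0<\dots<\overline{x}_n=\overline{y}} \big|[[0,n],C]_{\overline{x},\overline{x}_1,\dots,\overline{x}_{n-1},\overline{y}}\big|_{\mathrm{g}},$$
and substituting this identity into the first expression completes the proof.

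The only subtle point---and where I expect a reader to pause---is the use of the isocyclic hypothesis to identify the two indexing sets (strict chains of isomorphism classes versus chains of non-invertible morphisms); everything else is an assembly of already-established equivalences and the elementary formal properties of the cardinality $|\cdot|_{\mathrm{g}}$.
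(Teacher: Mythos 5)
Your proof is correct and follows essentially the same route as the paper: the paper's own proof is a one-line appeal to Theorem \ref{t3} combined with the groupoid-cardinality computation and the disjoint-union decomposition already established in the paragraphs preceding the lemma, which is exactly what you assemble. The one detail you make explicit that the paper leaves implicit is the use of isocyclicity to identify chains of non-isomorphisms with strict chains of isomorphism classes, and your argument for that step is right.
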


\begin{proof} Follows from Theorem \ref{t3} and the formula above.
\end{proof}

For our next constructions we need a few notions from the theory of simplicial sets \cite{goe, m, w}.  We are going to show that the formula above for $\mu[\overline{x},\overline{y}]$ can be understood in terms of augmented simplicial essentially finite groupoids. A simplicial essentially finite groupoid is a  functor
$$G: \Delta^{\circ} \longrightarrow \mathrm{gpd},$$ where $\Delta^{\circ}$ is the opposite category of $\Delta. $ Objects in $\Delta$ are the intervals $[0,n] \subseteq \mathbb{N}$ for $n \in \mathbb{N}.$ Morphisms in $\Delta$ are
non-decreasing maps.\\

Thus a simplicial essentially finite groupoid $G$ assigns an essentially finite groupoid $G_n$ to each $n \in \mathbb{N}$, and a functor $$\widehat{f}:G_m \longrightarrow G_n $$
to each non-decreasing map $f:[0,n] \longrightarrow [0,m].$\\

An augmented simplicial essentially finite groupoid is a functor
$$G: \Delta_a^{\circ} \longrightarrow \mathrm{gpd}$$ where $\Delta_a$ is the category obtained from $\Delta$ by adjoining an object $[-1]$ and a
unique morphism $[-1] \longrightarrow [0,n]$ for each $n \geq -1.$ Thus an augmented simplicial groupoid $G$ has, in addition, a groupoid $G_{-1}$ and a unique functor $G_n \longrightarrow G_{-1}$ for each
$n \geq -1$.\\

For $G$ an augmented simplicial groupoid and $n \geq -1$, we let $G_n^{\natural}$ be the full subgroupoid of $G_n$ whose objects are the non-degenerated objects of $G_n$. An object $x \in G_n$ is called degenerated if there is a non-decreasing map $f: [0,n] \longrightarrow [0,m]$ with $m < n$, and an object
$y \in  G_m$ such that $\widehat{f}(y) \simeq x.$

\begin{defn}{\em The reduced Euler characteristic $\widetilde{\chi}_{\mathrm{g}}G$ of an augmented simplicial groupoid $G: \Delta_a^{\circ} \longrightarrow \mathrm{gpd}\ $ is given by
$$\widetilde{\chi}_{\mathrm{g}} G \ = \ \sum_{n \geq -1}(-1)^n|G_n^{\natural} |_{\mathrm{g}} .$$
}
\end{defn}

Let $C$ be an isocyclic essentially locally finite category. For  $\overline{x} < \overline{y}$ in $\overline{C},$ we define the augmented
simplicial groupoid $C_{\ast}(\overline{x}, \overline{y})$ as follows. For $n \geq -1$, we set
$$C_{n}(\overline{x},\overline{y}) \ = \ [[0,n+2],C]_{\overline{x},\overline{y}}. $$
Given a morphism $f:[0,n] \longrightarrow [0,m] \ $ in $\ \Delta$, consider the non-decreasing extension map $f_e$ defined trough the commutative diagram
\[\xymatrix @R=.4in  @C=1in
{[0,n] \ar[r]^{f} \ar[d] &  [0,m] \ar[d] \\
\{0\} \sqcup [1, n+1] \sqcup \{n+2\} \ar[r]^{f_e}  &  \{0\} \sqcup [1, m+1] \sqcup \{m+2\}  }\]
where the vertical arrows arise from the increasing bijections
$$[0,n] \ \longrightarrow \ [1,n+1]\ \ \ \ \ \mbox{and} \ \ \ \ \ [0,m]\ \longrightarrow \ [1,m+1],$$ and we set
$f_e(0)=0, \ \ f_e(n+2)=m+2. \ $
Using this notation the functor $$\widehat{f}: [[0,m+2],C]_{\overline{x},\overline{y}} \ \ \longrightarrow \ \  [[0,n+2],C]_{\overline{x},\overline{y}}$$ is given by
$$\widehat{f}(F)\ =\ F f_e.$$
If we have another  morphism $\ g:[0,m] \longrightarrow [0,k] \ $ in $ \Delta$, then we have that
$$(gf)_e=g_e f_e \ \ \ \ \ \mbox{and thus}$$
$$\widehat{g f}(F)\ =\ F(gf)_e \ = \ F(g_e f_e) \ = \ (F g_e)  f_e \ =\ (\widehat{f}  \widehat{g} )(F). $$
The required unique functor  $\ [[0,m+2],C]_{\overline{x},\overline{y}} \ \ \longrightarrow \ \ [[0,1],C]_{\overline{x},\overline{y}}\ \ $ sends a diagram
 \[\xymatrix @R=.3in  @C=.4in
{x_0 \ar[r]^{f_1} \ar[d] &  x_1 \ar[r]^{f_2\ \ \ \ \ } \ar[d] & \ \ \ \ \ \ \ .... \ \ \ \ \ar[r]^{\ \ f_{n+1}} & x_{n+1}  \ar[r]^{f_{n+2}} \ar[d] & x_{n+2} \ar[d]\\
z_0 \ar[r]^{g_1}  &  z_1 \ar[r]^{g_2\ \ \ \ \ }  &  \ \ \ \ \ \ \ .... \ \ \ \ \ar[r]^{\ \ g_{n+1}} &  z_{n+1} \ar[r]^{g_{n+2}} & z_{n+2}} \]
to the diagram
\[\xymatrix @R=.4in  @C=1.2in
{x_0 \ar[r]^{f_{n+2}f_{n+1}....f_{2}f_{1}} \ar[d] &  x_{n+2} \ar[d] \\
z_0 \ar[r]^{g_{n+2}g_{n+1}....g_{2}g_{1}}  & z_{n+2}  }\]
Thus  $C_{\ast}(\overline{x},\overline{y})$ is an augmented simplicial groupoid.
All together  we have shown the following result.

\begin{thm}\label{t77}{\em Let $C$ be an isocyclic essentially locally finite category and $\overline{x} < \overline{y}$ in $\overline{C}.$ Then
$$\mu[\overline{x},\overline{y}] \ = \ \widetilde{\chi}_{\mathrm{g}} C_{\ast}(\overline{x},\overline{y}).$$
}
\end{thm}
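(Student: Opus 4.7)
My plan is to match the alternating sum from Lemma \ref{l3} with the alternating sum defining $\widetilde{\chi}_{\mathrm{g}}C_{*}(\overline{x},\overline{y})$, after identifying the simplicial notion of non-degeneracy on each $C_n(\overline{x},\overline{y})$ with the $\natural$-subgroupoid used in Lemma \ref{l3}.

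First I would unpack the right-hand side. By definition of the reduced Euler characteristic of an augmented simplicial groupoid, together with $C_n(\overline{x},\overline{y}) = [[0,n+2],C]_{\overline{x},\overline{y}}$, we have
$$\widetilde{\chi}_{\mathrm{g}} C_{*}(\overline{x},\overline{y}) \ = \ \sum_{n\geq -1}(-1)^n \big|\,C_n(\overline{x},\overline{y})^{\natural}\,\big|_{\mathrm{g}},$$
with $\natural$ denoting the simplicially non-degenerate full subgroupoid. Re-indexing by $m=n+2$, so that $(-1)^n=(-1)^m$ and $n\geq -1$ becomes $m\geq 1$, this rewrites as a sum of terms $(-1)^m\big|\,[[0,m],C]_{\overline{x},\overline{y}}^{\natural}\,\big|_{\mathrm{g}}$ over $m\geq 1$. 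Lemma \ref{l3} expresses $\mu[\overline{x},\overline{y}]$ as exactly the same alternating sum, except that the $\natural$ there means \emph{every arrow in the diagram fails to be an isomorphism}. It therefore suffices to show these two meanings of $\natural$ coincide.

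The decisive step is therefore: an object $F\in C_n(\overline{x},\overline{y})$ is simplicially degenerate if and only if some arrow in the diagram $F$ is an isomorphism. The forward direction is immediate from the computation of $\sigma_e$ in the construction of $C_{*}(\overline{x},\overline{y})$: a surjective non-decreasing map $\sigma:[0,n]\to [0,n-1]$ that doubles some value produces $\sigma_e:[0,n+2]\to [0,n+1]$ that doubles the shifted value, so every $G\circ \sigma_e$ has an identity arrow in the slot where $\sigma_e$ repeats. For the converse, if an arrow $\phi$ of $F$ is an isomorphism, one replaces the target object of $\phi$ throughout by its source and uses $\phi^{-1}$ as a single vertical component (with identities elsewhere) to produce a natural isomorphism in $C_n(\overline{x},\overline{y})$ from $F$ to a diagram $F'$ with an identity in the position of $\phi$; such an $F'$ is the image under $\widehat{\sigma}$ of the shorter diagram obtained from $F'$ by deleting the repeated vertex, for the appropriate degeneracy $\sigma$.

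Combining this equivalence with the index shift yields $\widetilde{\chi}_{\mathrm{g}}C_{*}(\overline{x},\overline{y}) = \mu[\overline{x},\overline{y}]$, invoking Lemma \ref{l3} at the last step. The only nontrivial point is the rectification of isomorphism arrows into identities using the groupoid structure on $C_n(\overline{x},\overline{y})$; the rest is a direct unpacking of the augmented simplicial structure constructed in the paragraphs preceding the theorem.
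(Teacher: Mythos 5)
Your overall strategy --- reindex the alternating sum of Lemma \ref{l3} and match it term by term against the definition of $\widetilde{\chi}_{\mathrm{g}}$ --- is exactly the route the paper takes, and the reindexing step is fine. You have also correctly isolated the one genuinely non-trivial point, namely that the superscript $\natural$ of Lemma \ref{l3} (``no arrow $F(i<i+1)$ is an isomorphism'') must agree with the superscript $\natural$ in the definition of $\widetilde{\chi}_{\mathrm{g}}$ (``simplicially non-degenerate''). The paper passes over this identification silently; you try to prove it, and that is where the gap lies.

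The biconditional you assert --- $F\in C_n(\overline{x},\overline{y})$ is degenerate if and only if some arrow of $F$ is an isomorphism --- is false for the simplicial structure as constructed. For any $f:[0,n]\to[0,m]$ the extension satisfies $f_e(0)=0$, $f_e(1)=f(0)+1\geq 1$, $f_e(n+1)=f(n)+1\leq m+1$ and $f_e(n+2)=m+2$, so $f_e$ can only identify consecutive values inside the block $[1,n+1]$ and never the pairs $(0,1)$ or $(n+1,n+2)$. A degeneracy therefore plants an identity only in one of the $n$ \emph{interior} arrow slots, and your rectification argument has no ``appropriate $\sigma$'' to invoke when the invertible arrow $\phi$ is the first or the last arrow of the diagram. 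Concretely, $C_{-1}$ and $C_{0}$ contain no degenerate objects at all (there is no $[0,m]$ with $m<0$), yet for any non-isomorphism $f:x\to y$ the object $x\overset{1_x}{\longrightarrow} x\overset{f}{\longrightarrow} y$ of $C_{0}(\overline{x},\overline{y})$ has an isomorphism in its first slot and is excluded from $[[0,2],C]^{\natural}_{\overline{x},\overline{y}}$. Hence the simplicially non-degenerate part of $C_n(\overline{x},\overline{y})$ is strictly larger than $[[0,n+2],C]^{\natural}_{\overline{x},\overline{y}}$, and the two alternating sums can genuinely differ: for the category with objects $x\simeq x'$ and $y$, a single non-invertible morphism from each of $x,x'$ to $y$, and no other non-identity morphisms besides the isomorphisms between $x$ and $x'$, one has $\mu[\overline{x},\overline{y}]=-1$, while the sum computed with simplicial non-degeneracy is $-1+2-1=0$. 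To close the gap one must either modify the simplicial structure so that degeneracies also reach the two boundary slots, or read $C_n(\overline{x},\overline{y})^{\natural}$ as meaning $[[0,n+2],C]^{\natural}_{\overline{x},\overline{y}}$ by definition --- which is what the paper's proof in effect does, reducing the theorem to a pure reindexing of Lemma \ref{l3}.
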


\begin{proof}
By Lemma \ref{l3} we have that
$$\mu[\overline{x},\overline{y}]\ = \
\sum_{n \geq 1}(-1)^n \Big| [[0,n],C]_{\overline{x},\overline{y}}^{\natural} \Big|_{\mathrm{g}} \ = $$
$$ \sum_{n \geq -1}(-1)^n \Big|[[0,n+2],C]_{\overline{x},\overline{y}}^{\natural}\Big|_{\mathrm{g}}  \ = \
\sum_{n \geq -1}(-1)^n \Big| C_{n}(\overline{x},\overline{y}) \Big|_{\mathrm{g}} \ = \ \widetilde{\chi}_{\mathrm{g}}C_{\ast}(\overline{x},\overline{y}).$$
\end{proof}

Next we consider  another distinguished element $\xi_{\mathrm{g}}$ in the incidence algebra $[\mathbb{I}_{(\overline{C},\leq)}, R]$ of an isocyclic essentially
locally finite category $C$ given for $\ \overline{x}\leq \overline{y} \in \overline{C} \ $  by:
$$\xi_{\mathrm{g}}[\overline{x},\overline{y}] \ = \ \Big|C(x,y)\rtimes (C(x,x)\times C(y,y)) \Big|_{\mathrm{g}}.$$ Thus we have that
$$\xi_{\mathrm{g}}[\overline{x},\overline{y}] \ = \frac{|C(x,y)|}{|C(x,x)||C(y,y)|} \ = \
 \frac{\xi[\overline{x},\overline{y}]}{|C(x,x)||C(y,y)|}.$$

For $\overline{x} < \overline{y} \in \overline{C}$, we let $D_{\ast}(x,y)$ be the augmented sub-simplicial groupoid of $C_{\ast}(\overline{x},\overline{y})$
such that for $n \geq -1$:
\begin{itemize}
\item  Objects in $D_{n}(x,y)$
are functors $F:[0,n+2]\longrightarrow C$ with $F(0)=x$ and $F(n+2)=y.$

\item  A morphism in $D_{n}(x,y)$ from $F$ to $G$ is a natural isomorphism $l: F \longrightarrow G$ such that the morphisms $l(0):x \longrightarrow x \ \ \mbox{and} \ \ l(n+2):y \longrightarrow y$ are identities.
\end{itemize}

Thus a morphism
in $D_{\ast}(x,y)$ is a commutative diagram of the form
\[\xymatrix @R=.2in  @C=.3in
{    &  x_1 \ar[r] \ar[dd] & x_2  \ar[r] \ar[dd] & \ \ \ \ \ \ \ .... \ \ \ \  \ar[r] &  x_{n} \ar[r]  \ar[dd] & x_{n+1} \ar[dd] \ar[dr] &  \\
x  \ar[ur] \ar[dr] &   & \ \ \ \ \ \ \  \ \ \ \  &  & & & y  \\
     &  z_1 \ar[r]  & z_2 \ar[r]  & \ \ \ \ \ \ \ .... \ \ \ \ \ar[r] & z_{n} \ar[r] &  z_{n+1} \ar[ur] &    } \]
where the diagonal and horizontal arrows are not allowed to be isomorphisms, and the vertical arrows are isomorphisms.

\begin{thm}\label{t7}{\em
The map $\xi_{\mathrm{g}}$ is a unit in $[\mathbb{I}_{(\overline{C},\leq)}, R]$  and its inverse $\mu_{\mathrm{g}}$ is given for
$\overline{x} < \overline{y}$ in $\overline{C}\ $ by
$$ \mu_{\mathrm{g}}[\overline{x},\overline{x}]\ = \ |C(x,x)|, \ \ \ \ \ \mbox{and}$$
$$\mu_{\mathrm{g}}[\overline{x},\overline{y}] \ = \ \sum_{n \geq 1}\ \sum_{\overline{x}  =
\overline{x}_0 < \overline{x}_1 < ... < \overline{x}_n=\overline{y}}
(-1)^n\frac{|C(x_0, x_1)|.................|C(x_{n-1}, x_n)|}{|C(x_1,x_1)|.......|C(x_{n-1},x_{n-1})|} .$$
Moreover, we have that:
$$\mu_{\mathrm{g}}[\overline{x},\overline{y}]  \ = \  \widetilde{\chi}_{\mathrm{g}} D_{\ast}(\overline{x},\overline{y}).$$}
\end{thm}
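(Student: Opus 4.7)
The plan is to mirror the proof strategy for $\mu$ and $\widetilde{\chi}_{\mathrm{g}} C_{\ast}$ from Theorems \ref{t3} and \ref{t77}, adapting it to the normalization by endpoint automorphisms that distinguishes $\xi_{\mathrm{g}}$ from $\xi$ and $D_{\ast}$ from $C_{\ast}$.

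First I would dispatch the invertibility of $\xi_{\mathrm{g}}$ by appealing to Theorem \ref{l1}. Since $\xi_{\mathrm{g}}[\overline{x},\overline{x}] = |C(x,x)\rtimes (C(x,x)\times C(x,x))|_{\mathrm{g}} = 1/|C(x,x)|$, and $R$ is a characteristic-zero ring containing $\mathbb{Q}$, each diagonal value is a unit; consequently $\xi_{\mathrm{g}}$ is a unit in $[\mathbb{I}_{(\overline{C},\leq)}, R]$, and its inverse on the diagonal is $\mu_{\mathrm{g}}[\overline{x},\overline{x}] = |C(x,x)|$. For the off-diagonal formula I would substitute the values
\[
\xi_{\mathrm{g}}[\overline{x}_i,\overline{x}_{i+1}] \ = \ \frac{|C(x_i,x_{i+1})|}{|C(x_i,x_i)||C(x_{i+1},x_{i+1})|}, \qquad \xi_{\mathrm{g}}[\overline{x}_i,\overline{x}_i]^{-1} \ = \ |C(x_i,x_i)|,
\]
into the explicit recursive inversion formula of Theorem \ref{l1}. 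A routine telescoping cancels both endpoint factors $|C(x_0,x_0)|$ and $|C(x_n,x_n)|$ completely and reduces each interior $|C(x_i,x_i)|^2$ to a single $|C(x_i,x_i)|$ in the denominator, yielding precisely the stated formula.

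Second, for the groupoid-cardinality identification, I would repeat the decomposition used for $C_{\ast}$. Every non-degenerate object of $D_n(x,y)$ is, up to natural isomorphism (with identities fixed at $x$ and $y$), a diagram $x \to x_1 \to \cdots \to x_{n+1} \to y$ in which no morphism is an isomorphism, so
\[
D_{n}^{\natural}(x,y) \ = \ \bigsqcup_{\overline{x} < \overline{x}_1 < \cdots < \overline{x}_{n+1} < \overline{y}} D_{n}(x,\overline{x}_1,\ldots,\overline{x}_{n+1}, y),
\]
where each summand collects diagrams with prescribed intermediate isomorphism classes. The key observation, which is where the endpoints-fixed convention in $D_{\ast}$ pays off, is that the automorphism groupoid on each summand is only $\prod_{i=1}^{n+1}C(x_i,x_i)$ (no factors from $C(x,x)$ or $C(y,y)$), giving the equivalence
\[
D_n(x,\overline{x}_1,\ldots,\overline{x}_{n+1}, y) \ \simeq \ \Big(\prod_{i=0}^{n+1}C(x_i,x_{i+1})\Big) \rtimes \Big(\prod_{i=1}^{n+1}C(x_i,x_i)\Big),
\]
with $x_0 = x$ and $x_{n+2} = y$. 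Taking groupoid cardinalities and re-indexing by the number of arrows $m = n+2$ matches $\widetilde{\chi}_{\mathrm{g}}D_{\ast}(\overline{x},\overline{y}) = \sum_{n \geq -1}(-1)^n|D_n^{\natural}(x,y)|_{\mathrm{g}}$ term by term with the formula just established for $\mu_{\mathrm{g}}[\overline{x},\overline{y}]$.

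The main obstacle I expect is the bookkeeping: verifying that non-degenerate simplices in the augmented simplicial groupoid $D_{\ast}$ correspond exactly to diagrams whose arrows are not isomorphisms (as opposed to not identities), and that the index shift $n \mapsto n+2$ between $\Delta_a^{\circ}$ and the arrow-count in the recursive inversion formula reconciles the alternating sign $(-1)^n$ with $(-1)^m$. Both points parallel what is already handled in Theorem \ref{t77}, and the only substantive novelty here is the careful accounting of which automorphisms do and do not act, which is precisely what $\xi_{\mathrm{g}}$ is designed to capture.
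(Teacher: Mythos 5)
Your proposal is correct and, for the algebraic half, follows exactly the paper's route: invoke Theorem \ref{l1}, substitute the values of $\xi_{\mathrm{g}}$ into the explicit inversion formula, and telescope the $|C(x_i,x_i)|$ factors; your cancellation bookkeeping (endpoints vanish entirely, interior squares drop to first powers) reproduces the paper's displayed computation. For the identity $\mu_{\mathrm{g}}[\overline{x},\overline{y}]=\widetilde{\chi}_{\mathrm{g}}D_{\ast}(\overline{x},\overline{y})$ the paper's proof environment is actually silent and relies on the discussion of $D_{\ast}(x,y)$ preceding the theorem; your decomposition of $D_n^{\natural}(x,y)$ by intermediate isomorphism classes, with the automorphism group reduced to $\prod_{i=1}^{n+1}C(x_i,x_i)$ because endpoint components are forced to be identities, is precisely the argument the paper intends, so you have filled in a step the paper leaves implicit rather than deviated from it.
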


\begin{proof}It follows from Theorem \ref{l1} that:
$$\mu_{\mathrm{g}}[\overline{x},\overline{y}] \ = \ \sum_{n\geq 1}\sum_{\overline{x}  =
\overline{x}_0 < \overline{x}_1 < ... < \overline{x}_n=\overline{y}}
(-1)^n\frac{\xi_{\mathrm{g}}[x_0, x_1].................\xi_{\mathrm{g}}[x_{n-1}, x_n]}{\xi_{\mathrm{g}}[x_0,x_0]\xi_{\mathrm{g}}[x_1,x_1].......
\xi_{\mathrm{g}}[x_{n-1},x_{n-1}]\xi_{\mathrm{g}}[x_n,x_n]}\ =$$
$$\sum_{n\geq 1}\sum_{\overline{x}  =
\overline{x}_0 < \overline{x}_1 < ... < \overline{x}_n=\overline{y}}
(-1)^n\frac{\frac{|C(x_0,x_1)|}{|C(x_0,x_0)||C(x_1,x_1)|}.............
\frac{|C(x_{n-1},x_n)|}{|C(x_{n-1},x_{n-1})||C(x_n,x_n)|}}
{\frac{|C(x_0,x_0)|}{|C(x_0,x_0)||C(x_0,x_0)|}.................
\frac{|C(x_{n},x_{n})|}{|C(x_{n},x_{n})||C(x_{n},x_{n})|}}\ =$$
$$\sum_{n \geq 1}\ \sum_{\overline{x}  =
\overline{x}_0 < \overline{x}_1 < ... < \overline{x}_n=\overline{y}}
(-1)^n\frac{|C(x_0, x_1)|.................|C(x_{n-1}, x_n)|}{|C(x_1,x_1)|.......|C(x_{n-1},x_{n-1})|}.$$
\end{proof}

\section{M$\ddot{\mbox{o}}$bius Categories}

In this and the next section we adopt the viewpoint that regards $\mathbb{N}_+,$ in the classical M$\ddot{\mbox{o}}$bius theory, as a monoid. The main idea is to
develop a  M$\ddot{\mbox{o}}$bius theory that applies to a large class of monoids. It turns out that this goal can be readily achieved  for the category of finite decomposition monoids. Indeed one can go further and define a M$\ddot{\mbox{o}}$bius theory that applies to
finite decomposition categories, better known in the literature as  M$\ddot{\mbox{o}}$bius categories.
Since a monoid is just a category with one object, the theory of finite decomposition monoids embeds into the theory of M$\ddot{\mbox{o}}$bius categories.
The notion of  M$\ddot{\mbox{o}}$bius categories was introduced by Leroux in \cite{lero, lero2}, and since then there have been  quite a few publications
in the field, among them \cite{ha, law, le2, sop}. Here we limit ourselves to the most basic results on M$\ddot{\mbox{o}}$bius categories. \\

Let $C$ be a category and $f$ a morphism in $C$. A $n$-decomposition of $f$, for $n \geq 1$, is $n$-tuple
$(f_1,...,f_n)$  of morphisms in $C$ such that $$f_n.....f_1\ = \ f.$$
Thus a $n$-decomposition of a morphism $f:x \longrightarrow y$ is a commutative diagram of the form
\[\xymatrix @R=.3in  @C=.3in
{  &  x_1 \ar[r]^{f_2} & x_2  \ar[r]^{f_3\ \ \ \ \ \ } & \ \ \ \ \ \ \ .... \ \ \ \ \ \ \ \ \  \ar[r]^{\ \ f_{n-2}} &  x_{n-1} \ar[r]^{f_{n-1}}  & x_{n-1} \ar[dr]^{f_n} &  \\
x  \ar[ur]^{f_1} \ar[rrrrrr]^f &   & \ \ \ \ \ \ \ \ \ \   \ \ \ \  &  & &  & y  } \]
Let $\mathrm{D}_nf$ be the set of $n$-decompositions of $f$.
A decomposition is called proper if none of its components is an identity morphism.
For $n \geq 2$, let $\mathrm{PD}_nf$ be the set of proper $n$-decompositions of $f$. Set
$\ \mathrm{PD}_1f = \mathrm{D}_1f = \{ f\} \ $  and
$$\mathrm{D}f \ = \ \bigsqcup_{n \geq 1} \mathrm{D}_nf \ \ \ \ \ \mbox{and} \ \ \ \ \ \mathrm{PD}f \ = \ \bigsqcup_{n \geq 1} \mathrm{PD}_nf.$$

\begin{defn}\label{ddd}{\em A  category $C$  is M$\ddot{\mbox{o}}$bius if $\mathrm{PD}f$ is a finite set for all morphisms $f$ in $C$,
i.e. each morphism in $C$ admits a finite number of  proper decompositions.
}
\end{defn}

Note that in a M$\ddot{\mbox{o}}$bius category the only isomorphisms are the identities.

\begin{lem}{\em If $\mathrm{PD}f$ is a finite set, then $\mathrm{D}_nf$ is a finite set for all $n \geq 1$. Indeed we have:
$$\big| \mathrm{D}_nf\big| \ = \ \sum_{k \geq 1}^n{ n \choose k} \big| \mathrm{PD}_{k}f\big| \ \ \ \ \  \mbox{and}
\ \ \ \ \ \big|\mathrm{PD}_{n}f \big| \ = \ \sum_{k \geq 1}^n (-1)^{n-k} {n \choose k}\big| \mathrm{D}_kf \big|.$$
}
\end{lem}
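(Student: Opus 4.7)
The plan is to establish an explicit bijection between $n$-decompositions of $f$ and pairs consisting of a subset of positions together with a proper decomposition of $f$. Given $(f_1,\dots,f_n) \in \mathrm{D}_nf$, I would let $S = \{ i \in [n] : f_i \mbox{ is not an identity}\}$ and let $\mathrm{prop}(f_1,\dots,f_n)$ denote the subsequence of $(f_1,\dots,f_n)$ indexed by $S$. Since identity factors leave compositions unchanged, this subsequence is a proper $|S|$-decomposition of $f$, so the assignment defines a map
$$\Phi_n \colon \mathrm{D}_nf \ \longrightarrow \ \bigsqcup_{k=1}^n \{S \subseteq [n] : |S|=k\} \times \mathrm{PD}_kf, \qquad (f_1,\dots,f_n) \mapsto (S, \mathrm{prop}(f_1,\dots,f_n)).$$

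For the inverse, I would start from $S=\{i_1<\cdots<i_k\}$ and a proper $k$-decomposition $(h_1,\dots,h_k)$ of $f$ with $h_j\colon x_{j-1}\to x_j$ (so $x_0$ is the source of $f$ and $x_k$ its target), and reconstruct the $n$-decomposition by placing $h_j$ at position $i_j$ and filling each position $i \notin S$ with the unique identity whose underlying object is determined by the adjacent non-identity factors. Since the proper decomposition prescribes all intermediate objects $x_0,x_1,\dots,x_k$ in advance, every inserted identity is forced, and this construction clearly inverts $\Phi_n$.

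Taking cardinalities on both sides of the bijection gives the first identity
$$|\mathrm{D}_nf| \ = \ \sum_{k=1}^n {n \choose k} |\mathrm{PD}_kf|,$$
and the finiteness of $\mathrm{D}_nf$ is then immediate: by hypothesis $\mathrm{PD}f = \bigsqcup_{k\geq 1} \mathrm{PD}_kf$ is finite, so only finitely many $\mathrm{PD}_kf$ are non-empty and each is finite. The second identity is an instance of binomial M$\ddot{\mbox{o}}$bius inversion on the Boolean lattice: the lower-triangular matrix $\bigl({n \choose k}\bigr)_{k\leq n}$ is invertible with entries $(-1)^{n-k}{n \choose k}$, as follows from the vanishing $\sum_{k=j}^n (-1)^{n-k} {n \choose k}{k \choose j} = \delta_{n,j}$, which in turn reduces via ${n \choose k}{k \choose j} = {n \choose j}{n-j \choose k-j}$ to the identity $(1-1)^{n-j} = \delta_{n-j,0}$.

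The main technical point to watch is the well-definedness of the inverse of $\Phi_n$, namely that each identity inserted at a position outside $S$ has an unambiguously determined object, including at the boundary positions $1$ and $n$. This is guaranteed because the proper decomposition $(h_1,\dots,h_k)$ already prescribes the full chain of objects $x_0,\dots,x_k$, so each empty slot has exactly one admissible identity and no consistency obstruction can arise.
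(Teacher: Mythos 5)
Your proof is correct and follows essentially the same route as the paper's: the paper's one-line argument (out of the $n$ morphisms in an $n$-decomposition, $n-k$ are identities placed in ${n \choose k}$ possible positions, and omitting them yields a proper $k$-decomposition) is precisely your bijection $\Phi_n$, and the paper likewise derives the second identity from (binomial) M$\ddot{\mbox{o}}$bius inversion. You have merely spelled out the well-definedness of the inverse map, which the paper leaves implicit.
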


\begin{proof}
The left identity is shown as follows. Out of the $n$ morphisms in a $n$-decomposition assume that $n-k$ are identities which can be placed in ${n \choose k} $ different positions.  Omitting the identity morphisms, a $n$-decomposition of the latter type reduces to a proper $k$-decomposition. The right identity follows from the M$\ddot{\mbox{o}}$bius inversion formula.
\end{proof}

Let $f$ be a morphism in a category $C$. We say that $f$ fixes a morphism $g \in C $ if
$$fg=g \ \ \ \ \ \mbox{or} \ \ \ \ \ gf =g.$$ Next result is due to Leroux \cite{lero, law}.

\begin{thm}\label{mej}
{\em A category $C$ is M$\ddot{\mbox{o}}$bius if and only if the following conditions hold:
\begin{itemize}
\item $\mathrm{PD}_2f$ is a finite set for each morphism $f$ in $C$.
\item Identities admit no proper decomposition.
\item If $f$ fixes a morphism, then $f$ is an identity morphism.
\end{itemize}
}
\end{thm}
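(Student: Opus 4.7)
For the forward direction, assume $C$ is M$\ddot{\mbox{o}}$bius. Condition (1) is immediate, since $\mathrm{PD}_2 f \subseteq \mathrm{PD}f$. For (2), suppose an identity $1_x$ admitted a proper $2$-decomposition $ba = 1_x$ with $a\colon x \to y$ and $b\colon y \to x$ both non-identities. Then $(ba)^m = 1_x$ yields the proper $2m$-decomposition $(a,b,a,b,\ldots,a,b)$ of $1_x$ for every $m \geq 1$, contradicting finiteness of $\mathrm{PD}(1_x)$. For (3), suppose $fg = g$ with $f$ a non-identity; then $g$ cannot be an identity either (else $f = fg = g$), and the iterated relation $g = f^n g$ produces the proper $(n+1)$-decomposition $(g, f, f, \ldots, f)$ of $g$ for each $n \geq 1$, again contradicting finiteness. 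The case $gf = g$ is symmetric.

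For the backward direction, assume (1), (2), (3). The strategy is to bound the length and the count of proper decompositions separately. Given $(f_1, \ldots, f_n) \in \mathrm{PD}_n f$, define for each $k \in \{1, \ldots, n-1\}$ the $2$-decomposition
$$\sigma_k \ = \ (f_k \cdots f_1,\ \ f_n \cdots f_{k+1}).$$
Condition (2) forces both components to be non-identities: if, say, $f_k \cdots f_1$ were an identity with $k \geq 2$, then $(f_1,\ldots,f_k)$ would be a proper decomposition of an identity. Hence $\sigma_k \in \mathrm{PD}_2 f$. The critical claim is that the $\sigma_k$ are pairwise distinct. If $\sigma_k = \sigma_l$ with $k < l$, then $h = f_l \cdots f_{k+1}$ satisfies $h \cdot (f_k \cdots f_1) = f_k \cdots f_1$, so $h$ fixes a morphism. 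By (3), $h$ must be an identity; but $l-k = 1$ makes $h = f_{k+1}$ a non-identity (contradiction), while $l-k \geq 2$ exhibits a proper decomposition of an identity (contradicting (2)). Hence $n - 1 \leq |\mathrm{PD}_2 f|$, bounding the length of any proper decomposition of $f$ by $|\mathrm{PD}_2 f| + 1$.

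To bound $|\mathrm{PD}_n f|$ for each fixed $n$, I induct on $n$; the base $n = 2$ is (1), and for $n \geq 3$ the ``first-cut'' map
$$\mathrm{PD}_n f \ \longrightarrow \ \bigsqcup_{(a,g) \in \mathrm{PD}_2 f} \mathrm{PD}_{n-1}(g),\qquad (f_1, \ldots, f_n) \ \longmapsto\ \bigl((f_1,\, f_n \cdots f_2),\ (f_2, \ldots, f_n)\bigr)$$
is well-defined (using (2) once more to ensure $f_n \cdots f_2$ is a non-identity) and manifestly injective, exhibiting $\mathrm{PD}_n f$ as injected into a finite union of finite sets. Combining this with the length bound gives $|\mathrm{PD}f| < \infty$. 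I expect the main obstacle to be the distinctness argument for the $\sigma_k$: it is precisely here that (2) and (3) must be invoked jointly, and the whole reduction to the $2$-decomposition data collapses if either hypothesis is weakened.
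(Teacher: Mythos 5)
Your proof is correct and follows essentially the same route as the paper's: the forward direction by iterating decompositions to produce infinitely many proper ones, and the backward direction by the pigeonhole argument on the cut $2$-decompositions $\sigma_k$ (whose pairwise distinctness you justify via conditions (2) and (3), which is exactly the paper's argument, though you spell out the details the paper compresses into a line or two) together with an induction showing each $\mathrm{PD}_nf$ is finite. The only cosmetic omission is in your verification of (2), where you treat only proper $2$-decompositions of identities; the identical iteration disposes of proper $n$-decompositions for every $n\geq 2$.
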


\begin{proof}
Assume $C$ is M$\ddot{\mbox{o}}$bius category. By definition   $\mathrm{PD}_2f$ is a finite set for each $f\in C$. If an identity morphism $1$ in $C$ admits a proper decomposition $$f_n.....f_1 \ = \ 1,$$ then
it admits infinitely many proper decompositions, indeed
$$1\ = \ f_n.....f_1\ = \ f_n.....f_1f_n.....f_1\ = \ f_n.....f_1f_n.....f_1f_n.....f_1 \ = \ .....$$ Let $f$ be a non-identity morphism and $g$ be another morphism. If $fg=g$, then $g$ is a   non-identity morphism that admits infinitely many proper decompositions $$g \ = \ fg \ = \ ffg \ = \ fffg\ = \ fffg\ = \ .....$$ Thus we conclude that $fg \neq g,$ and a similar argument shows that $gf \neq g$ as well. \\

Suppose now that the three conditions of the theorem hold. An inductive argument shows that if $\mathrm{PD}_2f$ is a finite set, then $\mathrm{PD}_nf$ is a finite set for all $n \geq 2.$ Let
$k = |\mathrm{PD}_2f|$, we show that $f$ can not have a proper decomposition of length greater or equal $k+2$. Assume that we have such a decomposition $(f_1,...,f_{k+2})$. Composing initial and final segments of morphisms in $(f_1,...,f_{k+2})$ one obtains $k+1$ proper $2$-decompositions
of $f$. They can not be all different, thus there exist $g$ and $h$, obtained by composing some of the $f_i$, such that $gh=h$. Then $h$ must be an identity,
in contradiction with the fact that identities admit no proper decomposition.

\end{proof}

\begin{defn}\label{gd}
{\em
A $\mathbb{N}$-graded  category with finite graded components $C$ is a category such that for $x,y,z \in C$
we have $$C(x,y)=\bigsqcup_{n \in \mathbb{N}}C_n(x,y),$$
$$C_0(x,x)=\{ 1_x \}, \ \ \ \ \ |C_n(x,y)|< \infty \ \ \ \mbox{and} \ \ \ C_n(x,y) \times C_m(y,z) \longrightarrow C_{n+m}(x,z).$$ The degree $\mathrm{deg}f \in \mathbb{N}$ of a morphism $f:x \longrightarrow y$ is such that $f\in C_{\mathrm{deg}f}(x,y).$
}
\end{defn}

\begin{exmp}{\em Let $\Gamma$ be a finite directed graph.
Let $P_{\Gamma}$ be the category of paths in $\Gamma$, i.e. objects of $P_{\Gamma}$ are the vertices of $\Gamma$, and morphisms in $P_{\Gamma}(x,y)$ are paths in $\Gamma$ from $x$ to $y$. By convention there is an empty path of length $0$ from each vertex to itself. Let $R$ be an equivalence relation on $\Gamma$-paths generated by a collection of pairs of paths in $\Gamma$ of the same length and with the same endpoints. Let $P_R$ be the category whose objects are the vertices of $\Gamma$ and whose morphisms are equivalence classes of paths in $\Gamma$. The category $P_R$ is $\mathbb{N}$-graded  with finite graded components.}
\end{exmp}

\begin{prop}{\em
Let $C$ be a $\mathbb{N}$-graded  category with finite graded components, let $(X, \leq )$ be a locally finite poset, and $F: X \longrightarrow C$ be a functor.   Let $X_F$ be the category with $X$ as its set of objects and with morphisms given by
 $$X_F(x,y) \ = \
\left\{\begin{array}{cc} C(F(x),F(y)) & \ \mathrm{if} \  x \leq y, \\
\emptyset & \ \ \  \mathrm{otherwise.}  \end{array}\right.
$$
The category $X_F$ is  M$\ddot{\mbox{o}}$bius.}
\end{prop}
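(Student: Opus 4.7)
The plan is to verify the three conditions of Leroux's characterization (Theorem \ref{mej}): that $\mathrm{PD}_2f$ is finite for every morphism $f$, that identities admit no proper decomposition, and that any fixing morphism is an identity. Each condition translates cleanly into the graded structure of $C$ combined with the local finiteness of the poset $(X,\leq)$, so the argument should be short once the translation is made.

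First I would address finiteness of $\mathrm{PD}_2f$. A morphism $f\in X_F(x,y)$ requires $x\leq y$ and is by definition an element of $C(F(x),F(y))$, so it has a well-defined degree $n=\deg f$. A $2$-decomposition $f=f_2f_1$ in $X_F$ corresponds to a choice of intermediate object $z\in[x,y]$ together with $f_1\in C(F(x),F(z))$ and $f_2\in C(F(z),F(y))$ such that $f_2f_1=f$ in $C$. Since degrees add, $f_1\in C_k(F(x),F(z))$ and $f_2\in C_{n-k}(F(z),F(y))$ for some $0\leq k\leq n$, and both components have only finitely many elements by the assumption that $C$ has finite graded components. Because $[x,y]$ is finite by local finiteness of $X$, the total count is finite, and passing to proper decompositions only restricts the count.

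Next I would handle the identity condition. An identity $1_x$ in $X_F$ corresponds to $1_{F(x)}$, which has degree $0$. A proper decomposition $1_x=f_n\cdots f_1$ in $X_F$ forces a chain $x=x_0\leq x_1\leq\cdots\leq x_n=x$, and by antisymmetry of $\leq$ all $x_i$ equal $x$; then each $f_i$ lies in $C(F(x),F(x))$ with degrees summing to $0$, hence each $\deg f_i=0$, so each $f_i=1_{F(x)}$ by the condition $C_0(F(x),F(x))=\{1_{F(x)}\}$ in Definition \ref{gd}. This contradicts properness.

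Finally, for the fixing condition, suppose $f\in X_F(y,y)$ satisfies $fg=g$ (the case $gf=g$ is symmetric). Then in $C$ we have $\deg f+\deg g=\deg g$, so $\deg f=0$, and again the axiom $C_0(F(y),F(y))=\{1_{F(y)}\}$ forces $f=1_{F(y)}=1_y$ in $X_F$. No step looks like a genuine obstacle: the main point is really just unpacking that $X_F$-morphisms carry a degree inherited from $C$, so all three conditions reduce to elementary degree counting together with antisymmetry of the poset order and finiteness of the intervals. The only place where one has to be a little careful is ensuring that the hom-sets $X_F(x,y)$ are finite for every pair with $x\leq y$ — this follows from $C(F(x),F(y))=\bigsqcup_n C_n(F(x),F(y))$ together with the fact, implicit in the finiteness of $\mathrm{PD}_2f$, that only finitely many degrees can appear in decompositions of a fixed morphism, but we do not actually need global finiteness of hom-sets, only finiteness of $\mathrm{PD}_nf$ for each individual $f$.
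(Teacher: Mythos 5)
Your proof is correct, but it takes a different route from the paper. The paper argues directly from Definition \ref{ddd}: for a proper decomposition of $f\in X_F(x,y)$ the intermediate objects must lie in the finite interval $[x,y]$, and the degrees of the components must sum to $\mathrm{deg}\,f$, so there are only finitely many diagrams and $\mathrm{PD}f$ is finite. You instead verify the three conditions of Leroux's characterization (Theorem \ref{mej}): finiteness of $\mathrm{PD}_2f$ via the finite interval and finite graded components, the identity condition via antisymmetry plus $C_0(F(x),F(x))=\{1_{F(x)}\}$, and the fixing condition via additivity of degree. All three checks are carried out correctly. What your route buys is that the genuinely delicate point --- bounding the \emph{length} of proper decompositions --- is outsourced to the general theorem; the paper's direct argument leaves that bound implicit (one needs to combine the degree bound, which controls components of positive degree, with the finite chain length in $[x,y]$, which controls degree-zero components between distinct objects), so your version is in this respect tighter. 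One small blemish in your closing remark: finiteness of the full hom-sets $X_F(x,y)=\bigsqcup_n C_n(F(x),F(y))$ does \emph{not} follow from the graded hypotheses, since infinitely many graded components may be nonempty; but as you yourself then observe, this is irrelevant, because the Möbius property only requires finiteness of $\mathrm{PD}f$ for each individual $f$, which is what you prove.
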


\begin{proof}Fix $x, y \in C.$ Note that $X_F(x,y) \neq \emptyset$ if and only if  $x \leq y$ as elements of $X$.  Since $(X, \leq)$ is a locally finite poset there is only a finite number of choices of objects
$x_1,....,x_{n-1} \in C$ for which there is a diagram $$x \overset{f_1}\longrightarrow x_1 \overset{f_2} \longrightarrow ....... \overset{f_{n-1}}\longrightarrow x_{n-1} \overset{f_n}\longrightarrow y \ \ \ \ \  \mbox{with}  \ \ \ \ \ f_n ..... f_1 \ = \ f .$$ Note that the latter identity implies that $$\mathrm{deg}f_1 \ + \ \mathrm{deg}f_2 \ + \ ..... \ + \ \mathrm{deg}f_n \ = \ \mathrm{deg}f,$$ so given the tuple $x_1,....,x_{n-1}$ there is only a finite number of diagrams as the above.
\end{proof}

\begin{defn}{\em
A category $C$ is one way if in any diagram $x \overset{f}\longrightarrow y \overset{g} \longrightarrow x $ in $C$, we have that $\ x=y\ $ and $\ f=g= 1_x.$}
\end{defn}

\begin{prop}{\em Let $C$ be a category with $C(x,y)$ a finite set for all $x,y \in C$. Then $C$ is a M$\ddot{\mbox{o}}$bius category if and only if $C$ is a locally finite and one way category.
}
\end{prop}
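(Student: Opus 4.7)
The plan is to prove each direction using the characterization of M\"obius categories from Theorem \ref{mej}, namely that a category is M\"obius iff $\mathrm{PD}_2 f$ is finite for every morphism $f$, identities admit no proper decomposition, and any morphism which fixes another morphism must be an identity.

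For the forward direction, assume $C$ is M\"obius with finite hom-sets. To establish the one-way property, take any configuration $x \overset{f}{\to} y \overset{g}{\to} x$ and consider the iterated endomorphisms $1_x, gf, (gf)^2, \ldots$ inside the finite set $C(x,x)$. Pigeonhole yields $m < n$ with $(gf)^m = (gf)^n$, so $(gf)^{n-m}$ fixes $(gf)^m$, and the ``fixing clause'' of Theorem \ref{mej} upgrades this to $(gf)^k = 1_x$ for some $k \geq 1$. If $gf$ were not an identity, the constant tuple $(gf, \dots, gf)$ of length $k \geq 2$ would be a proper decomposition of $1_x$, contradicting the ``no proper decomposition of identities'' clause; hence $gf = 1_x$, and applying the same clause to the $2$-decomposition $(f,g)$ of $1_x$ forces $f$ or $g$ to be an identity, which pins down $x=y$ and $f=g=1_x$. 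Local finiteness then follows: the loop condition of Definition \ref{loc} is exactly the one-way property, and $|[x,y]|$ is bounded by sending each $z \in [x,y]$ to a chosen 2-decomposition through $z$, producing an injection $[x,y] \hookrightarrow \bigsqcup_{h \in C(x,y)} \mathrm{D}_2 h$; the right-hand side is finite since $C(x,y)$ is finite and each $\mathrm{D}_2 h = \mathrm{PD}_2 h \sqcup \{(1_x, h), (h, 1_y)\}$ is finite by the M\"obius hypothesis.

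For the backward direction, assume $C$ is locally finite and one-way. Fix $f \colon x \to y$ and a proper decomposition $x = x_0 \overset{f_1}{\to} x_1 \overset{f_2}{\to} \cdots \overset{f_n}{\to} x_n = y$. The key observation is that the objects $x_0, \ldots, x_n$ are pairwise distinct: if $x_i = x_j$ with $i < j$, composing $f_{i+2} \cdots f_j$ produces a loop $x_i \overset{f_{i+1}}{\to} x_{i+1} \to x_i$, and one-way forces $f_{i+1} = 1_{x_i}$, contradicting properness. Hence the interior objects $x_1, \ldots, x_{n-1}$ live in the finite set $[x,y]$, so $n$ is bounded by $|[x,y]|+1$; for each admissible chain the number of decompositions is at most the finite product $\prod_{i=0}^{n-1} |C(x_i, x_{i+1})|$, and summing over the finitely many chains gives $|\mathrm{PD} f| < \infty$.

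The main obstacle will be the one-way step of the forward direction: it is not immediately clear that a loop $x \to y \to x$ must collapse all the way to identities, and the argument genuinely requires the finiteness of $C(x,x)$ together with the pigeonhole-plus-fixing trick to produce $(gf)^k = 1_x$. Once that key step is in hand, the rest of both directions is essentially bookkeeping built on the definition of $[x,y]$ and the bound on decomposition length.
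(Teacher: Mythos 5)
Your proof is correct and follows essentially the same route as the paper: both directions rest on Theorem \ref{mej}, using the pigeonhole argument in a finite hom-set together with the fixing clause to collapse loops, and the observation that a proper decomposition must pass through pairwise distinct objects of the finite interval $[x,y]$. You are in fact a bit more explicit than the paper in deriving the full one-way property for loops $x \longrightarrow y \longrightarrow x$ with $x \neq y$ (the paper only treats endomorphisms directly), but the underlying argument is the same.
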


\begin{proof}
Assume $C$ is a locally finite category. Objects of $C$ are partially ordered according to Lemma \ref{order}. For $x \leq y$ in $C$ we let $C[x,y]$ be the set of linearly ordered subsets of the interval $[x,y]$. The map $\mathrm{PD}f \longrightarrow C[x,y]$ sending $(f_1,...,f_n)$ to the linearly ordered set
$$\{tf_1 < tf_2 < .... <  tf_{n-1}\} \ \subseteq \ [x,y]$$ has a finite codomain since $[x,y]$ if a finite set as $C$ is locally finite, it has finite fibers because morphism between objects of $C$ are finite sets, thus $\mathrm{PD}f$ is a finite set.\\

Assume now that $C$ is a M$\ddot{\mbox{o}}$bius category with finite sets of morphisms. The map
$$\bigsqcup_{f\in C(x,y)}\mathrm{PD}f \longrightarrow C[x,y]$$ is surjective and has a finite domain.
Thus the interval $[x,y]$ is a finite set. Next we show that any endomorphism $f:x \longrightarrow x$  in $C$ is an identity. Since $C(x,x)$ is a finite set, there are integers $n \geq 0$ and $k\geq 1$ such $$f^n=f^{n+k}=f^{n}f^{k}.$$ So $f^k$ fixes a morphism, and then $f^k=1$ by Theorem \ref{mej}.  Since $1$ admits no proper decomposition we have that $f=1.$
\end{proof}

Next we associate a convolution algebra to each  category $C$ such that $\mathrm{D}_2f$ is a finite set for all morphisms $f$ in $C$.

\begin{defn}{\em  Let $C$ be a category such that $\mathrm{D}_2f$ is a finite set for all morphism  $f \in C_1$. The convolution algebra of $C$ is the pair $([C_1, R], \star)$
where for $\alpha, \beta \in [C_1, R]$ the product $\alpha \star \beta : C_1 \longrightarrow R$ is given on $f \in C_1$ by
$$\alpha \star \beta(f) \ = \ \sum_{(f_1, f_2) \in \mathrm{D}_2f}\alpha(f_1)\beta(f_2) .$$}
\end{defn}

\begin{prop}{\em $([C_1, R], \star)$ is an associative algebra with unit the map $1 \in [C_1, R]$
such that $1(f)=1$ if $f$ is an identity morphism, and zero otherwise.}
\end{prop}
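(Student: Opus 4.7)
The plan is direct: verify the two algebra axioms (associativity and unit) by unfolding the definition of $\star$ and reorganizing the resulting double sums as single sums over $\mathrm{D}_3 f$. Nothing deep happens; the argument is bookkeeping that rests on the associativity of composition in $C$.

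First I would check that the convolution product is well-defined: the sum defining $(\alpha \star \beta)(f)$ is finite by the hypothesis that $\mathrm{D}_2 f$ is finite, so $\alpha \star \beta \in [C_1, R]$. For associativity I also need $\mathrm{D}_3 f$ finite, which follows from the disjoint decomposition $\mathrm{D}_3 f = \bigsqcup_{(g,f_3) \in \mathrm{D}_2 f} \{(f_1,f_2,f_3) : (f_1,f_2) \in \mathrm{D}_2 g\}$, a finite union of finite sets.

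For associativity I would compute
$$((\alpha \star \beta) \star \gamma)(f) \ = \ \sum_{(g, f_3) \in \mathrm{D}_2 f} \sum_{(f_1, f_2) \in \mathrm{D}_2 g} \alpha(f_1)\beta(f_2)\gamma(f_3),$$
and observe that the pairs $((f_1,f_2),(g,f_3))$ appearing here are in bijection with $\mathrm{D}_3 f$ via $(f_1,f_2,f_3) \mapsto (f_2 f_1, f_3)$, with inverse $((f_1,f_2),(g,f_3)) \mapsto (f_1,f_2,f_3)$. A symmetric reindexing on the right-hand side, using $(f_1, h) \in \mathrm{D}_2 f$ and $(f_2,f_3) \in \mathrm{D}_2 h$ with $h = f_3 f_2$, shows both sides equal
$$\sum_{(f_1,f_2,f_3) \in \mathrm{D}_3 f} \alpha(f_1)\beta(f_2)\gamma(f_3),$$
where the identification of the two indexings is precisely the associativity $(f_3 f_2) f_1 = f_3 (f_2 f_1)$ in $C$.

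For the unit axiom I would note that if $(f_1, f_2) \in \mathrm{D}_2 f$ and $f_1$ is an identity, then $f_1$ must be $1_{sf}$ and therefore $f_2 = f$; dually, if $f_2$ is an identity then $f_2 = 1_{tf}$ and $f_1 = f$. Hence all terms of $\mathbf{1} \star \alpha(f) = \sum_{(f_1,f_2) \in \mathrm{D}_2 f} \mathbf{1}(f_1)\alpha(f_2)$ vanish except the one coming from $(1_{sf}, f)$, giving $\mathbf{1} \star \alpha(f) = \alpha(f)$; the equality $\alpha \star \mathbf{1} = \alpha$ is identical with $(f, 1_{tf})$. The only subtle point to state cleanly is the bijection between nested 2-decompositions and 3-decompositions in the associativity step, but this is essentially the defining property of a category.
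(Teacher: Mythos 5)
Your proposal is correct and follows essentially the same route as the paper: both sides of the associativity identity are identified with a single sum over $\mathrm{D}_3f$, with finiteness of $\mathrm{D}_nf$ deduced from that of $\mathrm{D}_2f$. You simply spell out the bijection between nested $2$-decompositions and $3$-decompositions and the unit computation, which the paper leaves implicit.
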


\begin{proof} Since $\mathrm{D}_2f$ is a finite set, then $\mathrm{D}_nf$ is a finite set for all $n \geq 2.$
The unit property is clear. Associativity follows from the identities
$$\alpha_1 \star (\alpha_2 \star \alpha_3)(f) \ = \
\sum_{(f_1,f_2,f_3) \in \mathrm{D}_3f} \alpha_1(f_1)\alpha_2(f_2)\alpha_3(f_3)
\ = \ (\alpha_1 \star \alpha_2) \star \alpha_3(f).$$
More generally we have that
$$\alpha_1 \star \alpha_2 \star ....\star \alpha_n(f) \ = \
\sum_{(f_1,...,f_n) \in \mathrm{D}_nf} \alpha_1(f_1)\alpha_2(f_2) ....\alpha_n(f_n).$$
\end{proof}

\begin{cor}{\em Let $C$ be a category with $\mathrm{D}_2f$ is a finite set for all morphism  $f \in C_1$. The free $R$-module $<C_1>$ generated by $C_1$ together with the $R$-linear maps
$$\Delta: <C_1> \longrightarrow <C_1> \otimes <C_1> \ \ \ \ \ \textrm{and} \ \ \ \ \ \epsilon: <C_1> \longrightarrow R$$ given on generators, respectively, by
$$\Delta f\ = \ \sum_{(f_1, f_2) \in \mathrm{D}_2f}f_1 \otimes f_2 \ \ \ \ \ \mbox{and} \ \ \ \ \
\epsilon f \ = \ \left\{\begin{array}{cc}
1 & \ \mathrm{if} \ f \ \mbox{is an identity},  \\

0 & \  \mathrm{if} \ x \neq y, \ \ \ \ \ \ \ \ \ \ \ \ \
\end{array}\right.$$
is a $R$-coalgebra.}
\end{cor}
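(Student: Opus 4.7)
The plan is to verify the two defining coalgebra axioms (coassociativity and the counit identity) directly on the generators $f \in C_1$, since both $\Delta$ and $\epsilon$ are $R$-linear and $<C_1>$ is free. The finiteness of $\mathrm{D}_2 f$ ensures that $\Delta f$ is a well-defined finite sum, and (as already noted in the proof of the preceding proposition) this implies $\mathrm{D}_n f$ is finite for all $n \geq 2$, so all the iterated sums that appear are finite as well. This parallels the coalgebra lemma for locally finite posets earlier in the paper, where the proof was a short direct calculation; the same strategy should work here.

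For coassociativity, I would expand both sides and show that each equals the sum indexed by $\mathrm{D}_3 f$. Explicitly,
$$
(\Delta \otimes 1)\Delta f \ = \ \sum_{(g,h)\in \mathrm{D}_2 f}\ \sum_{(f_1,f_2)\in \mathrm{D}_2 g} f_1 \otimes f_2 \otimes h ,
$$
and the triples $(f_1, f_2, h)$ appearing on the right are precisely those triples of composable morphisms with $h f_2 f_1 = f$, i.e.\ the elements of $\mathrm{D}_3 f$, by associativity of composition in $C$. An analogous expansion of $(1\otimes \Delta)\Delta f$ yields the same sum, so the two agree.

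For the counit axiom, applying $\epsilon \otimes 1$ to $\Delta f = \sum_{(f_1,f_2)\in \mathrm{D}_2 f} f_1 \otimes f_2$ kills every term for which $f_1$ is not an identity. If $f : x \to y$, the only $2$-decomposition whose first component is an identity is $(1_x, f)$, so $(\epsilon \otimes 1)\Delta f = f$ under the identification $R \otimes <C_1> \simeq <C_1>$. The symmetric argument using $(1_y, \ldots)$ gives $(1 \otimes \epsilon)\Delta f = f$ as well. This uses only the unital axiom of the category.

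The main obstacle, such as it is, is just being careful about two bookkeeping points: first, that the coassociativity computation genuinely produces $\mathrm{D}_3 f$ on both sides without overcounting, which follows from the uniqueness of the factorization $f = h(f_2 f_1) = (h f_2)f_1$ given the chosen intermediate objects; and second, that on the counit side there is truly a unique identity-first (respectively identity-last) decomposition of $f$, which is immediate from $f_2 \circ 1_{sf} = f_2$ forcing $f_2 = f$. No further structure on $C$ (local finiteness, one-way, M\"obius) is needed — only the finiteness of $\mathrm{D}_2 f$ — so this corollary is strictly a consequence of the preceding proposition packaged in coalgebraic language.
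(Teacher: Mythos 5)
Your proof is correct and is essentially the paper's argument: the paper states this corollary without a separate proof, treating it as the dual of the immediately preceding proposition, whose proof rests on exactly the identification of the double sum with $\mathrm{D}_3f$ that you verify for coassociativity (together with the evident unit/counit check). Your direct verification on the coalgebra side, including the observation that $(1_{sf},f)$ is the unique identity-first $2$-decomposition, supplies precisely the details the paper leaves implicit.
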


\begin{thm}\label{mier}
{\em Let $C$ be a M$\ddot{\mbox{o}}$bius category. A map $\alpha \in [C_1, R]$ is a $\star$-unit if and only if $\alpha(1_x)$ is
a unit in $R$ for all $x \in C.$ Thus the map $\xi \in [C_1, R]$  constantly equal to $1$
is a unit in $([C_1, R], \star)$; its inverse $\mu$, called the  M$\ddot{\mbox{o}}$bius function of $C$, is such that
$\mu(1_x)=1$ for all $x \in C$, and if $f$ is a non-identity morphism in $C$ then we have that
$$\mu f\ =\ \sum_{n \geq 1 }(-1)^n \big| \mathrm{PD}_nf \big|.$$
}
\end{thm}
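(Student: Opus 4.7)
The plan is to reduce to a geometric-series argument in the convolution algebra, exploiting the fact that in a M\"obius category proper decompositions can only be so long.

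First I would prove the unit characterization. Decompose $\alpha = \alpha_0 + \alpha_1 \in [C_1,R]$, where $\alpha_0$ is supported on identities and $\alpha_1$ on non-identity morphisms. The only $2$-decomposition of $1_x$ is $(1_x,1_x)$, because identities admit no proper decomposition by the second clause of Theorem \ref{mej}. Hence if $\alpha \star \beta = 1$, then evaluating at $1_x$ gives $\alpha(1_x)\beta(1_x) = 1$, so each $\alpha(1_x)$ must be a unit. Conversely, assume each $\alpha(1_x)$ is a unit, and define $\eta \in [C_1, R]$ by $\eta(1_x) = \alpha(1_x)^{-1}$ on identities and $\eta(f) = 0$ otherwise. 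Then $\alpha_0 \star \eta(f)$ vanishes unless every factor in every $2$-decomposition of $f$ is an identity, which forces $f$ itself to be an identity and yields $\alpha_0 \star \eta = 1$; similarly $\eta \star \alpha_0 = 1$, so $\alpha_0$ is a two-sided unit with inverse $\eta$.

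Next I would factor $\alpha = \alpha_0 \star (1 + \gamma)$, where $\gamma = \eta \star \alpha_1$. Note that $\gamma(1_x) = 0$ for all $x$, again because the only $2$-decomposition of $1_x$ is $(1_x,1_x)$ and $\alpha_1(1_x)=0$. The key step is now to invert $1 + \gamma$ by a formal geometric series $(1+\gamma)^{-1} = \sum_{n \geq 0}(-1)^n \gamma^{\star n}$. For this to make sense evaluated at a fixed morphism $f$, I must show that $\gamma^{\star n}(f) = 0$ for all sufficiently large $n$. Expanding,
$$\gamma^{\star n}(f) \ = \ \sum_{(f_1,\dots,f_n) \in \mathrm{D}_n f} \gamma(f_1)\cdots \gamma(f_n),$$
and since $\gamma$ vanishes on identities, only proper $n$-decompositions contribute; that is, the sum ranges over $\mathrm{PD}_n f$. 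By Theorem \ref{mej} (specifically the bound obtained in its proof: $\mathrm{PD}_n f$ is empty once $n \geq |\mathrm{PD}_2 f| + 2$), this set is empty for $n$ large enough. Hence the series terminates at each $f$, and $\alpha^{-1} = (1+\gamma)^{-1} \star \eta$ is well-defined.

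Finally I would specialize to $\xi$, which is constantly $1$. Then $\xi(1_x) = 1$ is a unit, so $\xi$ is invertible; moreover $\xi_0 = 1$ (the convolution unit) and $\eta = 1$, so $\gamma = \xi_1$ is the indicator of non-identity morphisms. Iterating the convolution yields
$$\xi_1^{\star n}(f) \ = \ \sum_{(f_1,\dots,f_n) \in \mathrm{PD}_n f} 1 \ = \ \bigl|\mathrm{PD}_n f\bigr|,$$
so $\mu(f) = \sum_{n \geq 0}(-1)^n |\mathrm{PD}_n f|$. On an identity this collapses to $\mu(1_x) = 1$; on a non-identity $f$ one has $\xi_1^{\star 0}(f) = 0$ and the claimed formula $\mu f = \sum_{n \geq 1}(-1)^n|\mathrm{PD}_n f|$ follows. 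The main obstacle is really the termination of the Neumann series, which is precisely where the defining property of M\"obius categories (via Theorem \ref{mej}) gets used.
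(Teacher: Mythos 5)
Your proof is correct and follows essentially the same route as the paper: the closed formula the paper simply writes down for $\alpha^{-1}$, namely $\alpha^{-1}f=\sum_{n\geq 1}\sum_{(f_1,\dots,f_n)\in\mathrm{PD}_nf}(-1)^n\alpha(f_1)\cdots\alpha(f_n)/\bigl(\alpha(1_{sf_1})\alpha(1_{tf_1})\cdots\alpha(1_{tf_n})\bigr)$, is exactly what your factorization $\alpha=\alpha_0\star(1+\gamma)$ and terminating Neumann series produce. The only difference is that you make explicit the termination argument (via the bound $\mathrm{PD}_nf=\emptyset$ for $n\geq|\mathrm{PD}_2f|+2$ from the proof of Theorem \ref{mej}) that the paper leaves implicit.
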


\begin{proof} Let $\alpha, \beta \in [C_1, R]$
be such that $\alpha \star \beta =1.$ Then for all $x \in C$ we have that
$$\alpha(1_x)\beta(1_x)\ = \ (\alpha \star \beta)(1_x) \ = \ 1, \ \ \ \
\mbox{thus} \ \ \alpha(1_x) \ \  \mbox{is a unit}.$$
Conversely, assume that  $\alpha(1_x)$ is a unit and write
$\alpha^{-1}(1_x) = \frac{1}{\alpha(1_x)} $. Let $f$ be a non-isomorphism in $C$,  we have
that
$$\alpha^{-1}f \ \ =\ \ \sum_{n\geq 1}\ \ \sum_{(f_1,....,f_n) \in \mathrm{PD}_nf}
(-1)^n\frac{\alpha(f_1).......\alpha(f_n)}{\alpha(1_{sf_1})\alpha(1_{tf_1})....\alpha(1_{tf_n})} ,$$
where $s,t: C_1 \longrightarrow C_0$ are, respectively, the source and target maps of $C$.
\end{proof}

\begin{cor}{\em Let $C$ be a M$\ddot{\mbox{o}}$bius category and let $R[[x_{f}]]$ be the $R$-algebra of formal power series in the variables $x_{f}$ with
$f$ a non-identity in $C_1$. The structural maps on $R[[x_{f}]]$ given, respectively, on generators by
$$\epsilon 1=1 \ \ \ \ \mbox{and}\ \ \ \ \epsilon x_{f}= 0,$$
$$\Delta 1=1\otimes 1 \ \ \ \ \mbox{and} \ \ \ \
\Delta x_{f} \ = \ 1\otimes x_{f} \ + \ \sum_{(f_1, f_2) \in \mathrm{D}_2f}x_{f_1} \otimes x_{f_2} \ + \
x_{f}\otimes 1,$$
$$S 1 =1 \ \ \ \ \mbox{and}\ \ \ \ S x_{f}\ = \ \sum_{n\geq 1}\ \sum_{(f_1,....,f_n) \in \mathrm{PD}_nf}(-1)^nx_{f_1}....x_{f_n}$$ turn $R[[x_{f}]]$ into a Hopf algebra, and the  M$\ddot{\mbox{o}}$bius $\mu \in [C_1, R]$ of $C$ is given by
$$\mu f \ = \ Sx_{f}(1).$$.}
\end{cor}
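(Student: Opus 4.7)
The plan is to imitate the proof of Theorem \ref{t1}, substituting $n$-decompositions of morphisms for the $(n+1)$-chains used in the interval coalgebra. First I would extend the formulas for $\epsilon$, $\Delta$, and $S$ from generators to all of $R[[x_f]]$: $\epsilon$ and $\Delta$ as continuous $R$-algebra homomorphisms, and $S$ as a continuous (anti)algebra homomorphism --- equivalently just an algebra homomorphism, since $R[[x_f]]$ is commutative. The finiteness of proper decompositions guaranteed by Definition \ref{ddd} ensures that every infinite sum encountered (in particular the closed form for $Sx_f$) makes sense in the power series completion.

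Next I would verify the coalgebra axioms on the generator $x_f$. The counit axiom is transparent: $\epsilon$ kills every $x_{f_i}$ appearing in $\Delta x_f$, leaving precisely the $1\otimes x_f$ and $x_f\otimes 1$ summands. For coassociativity, I would observe that both $(\Delta \otimes 1)\Delta x_f$ and $(1\otimes \Delta)\Delta x_f$, after adopting the convention $x_{1_x}=1$ and expanding using the multiplicativity of $\Delta$, reduce to the same symmetric sum $\sum_{(f_1,f_2,f_3)\in \mathrm{D}_3 f} x_{f_1}\otimes x_{f_2}\otimes x_{f_3}$, the associativity of composition in $C$ providing the canonical bijection between the two ways of splitting $f$ into three composable pieces.

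The main obstacle is the antipode axiom $m(S\otimes \mathrm{id})\Delta x_f = 0 = m(\mathrm{id}\otimes S)\Delta x_f$ for non-identity $f$. Here I would expand $m(S\otimes \mathrm{id})\Delta x_f$ as a sum indexed by a 2-decomposition $(g_1,g_2)\in \mathrm{D}_2 f$ together with a proper decomposition of $g_1$, and then regroup by fixing a proper $n$-decomposition $(h_1,\dots,h_n)\in \mathrm{PD}_n f$ of $f$: for each choice of ``split point'' $i\in\{0,1,\dots,n\}$ one obtains a contribution of $(-1)^i x_{h_1}\cdots x_{h_n}$, and the sum over $i$ gives the coefficient $(1-1)^n = 0$. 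This is the same telescoping cancellation that underlies the classical computation of $\mu$ in Proposition 3.

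Finally, to identify $Sx_f(1)$ with $\mu f$, I would substitute $x_g=1$ for every non-identity $g$ in the closed form for $Sx_f$, obtaining
$$Sx_f(1) \ = \ \sum_{n\geq 1}(-1)^n\,|\mathrm{PD}_n f|,$$
which is precisely the expression for $\mu f$ given by Theorem \ref{mier}. Together with the bialgebra structure and the antipode identity verified above, this shows both that $R[[x_f]]$ is a Hopf algebra and that $\mu f = Sx_f(1)$.
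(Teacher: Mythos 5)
The paper states this corollary (like its poset analogue, Theorem \ref{t1}) without proof, so your proposal is really supplying the missing verification, and most of it is sound: the extension of $\epsilon,\Delta,S$ as continuous algebra maps, the counit check, the coassociativity check via the bijection $\mathrm{D}_3f\cong\{((f_2f_1,f_3),(f_1,f_2))\}$, and the final evaluation $Sx_f(1)=\sum_{n\geq1}(-1)^n|\mathrm{PD}_nf|=\mu f$ via Theorem \ref{mier} are all correct. One small caveat: with your convention $x_{1_x}=1$ the middle sum in the displayed $\Delta x_f$ has to be read over $\mathrm{PD}_2f$ rather than $\mathrm{D}_2f$ (as it is written in the later corollary for essentially finite decomposition categories); otherwise the decompositions $(1_x,f)$ and $(f,1_y)$ duplicate the explicit $1\otimes x_f$ and $x_f\otimes1$ terms and even the counit axiom fails. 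With that reading one has the compact form $\Delta x_g=\sum_{(f_1,f_2)\in\mathrm{D}_2g}x_{f_1}\otimes x_{f_2}$ for every morphism $g$, which is what your coassociativity argument actually uses.

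The genuine gap is in the antipode step. In $m(S\otimes\mathrm{id})\Delta x_f=\sum_{(g_1,g_2)\in\mathrm{D}_2f}S(x_{g_1})\,x_{g_2}$ the second tensor factor contributes the \emph{single} variable $x_{g_2}$, indexed by the composite morphism $g_2$; it is not expanded into $x_{h_{i+1}}\cdots x_{h_n}$. So for a fixed $(h_1,\dots,h_n)\in\mathrm{PD}_nf$ the terms attached to the split points $i=0,\dots,n$ are the pairwise distinct monomials $x_{h_1}\cdots x_{h_i}\,x_{h_n\cdots h_{i+1}}$, not $n+1$ copies of $x_{h_1}\cdots x_{h_n}$; and in any case $\sum_{i=0}^n(-1)^i$ is not $(1-1)^n$ and does not vanish for $n$ even. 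The cancellation that actually works is pairwise: index the expansion by pairs consisting of $(g_1,g_2)\in\mathrm{D}_2f$ and $(k_1,\dots,k_m)\in\mathrm{PD}_mg_1$, carrying the term $(-1)^mx_{k_1}\cdots x_{k_m}x_{g_2}$. The involution that, when $g_2$ is not an identity, replaces this datum by $(g_2g_1,1_{tf})$ together with $(k_1,\dots,k_m,g_2)\in\mathrm{PD}_{m+1}f$ (and conversely strips the last factor off when $g_2$ is an identity and $m\geq1$) preserves the monomial and reverses the sign, so everything cancels and $m(S\otimes\mathrm{id})\Delta x_f=0=\epsilon(x_f)1$; the case $m(\mathrm{id}\otimes S)\Delta$ is symmetric. (Alternatively, you could sidestep the computation entirely by observing that the finiteness of $\mathrm{PD}f$ makes each $x_f$ conilpotent, so the bialgebra is connected filtered and the antipode exists automatically with exactly the stated geometric-series formula.) As written, the telescoping you describe would not survive being expanded.
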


\begin{prop}{\em Let $C$ and $D$ be finite M$\ddot{\mbox{o}}$bius categories, then
$C\times D$ is a finite M$\ddot{\mbox{o}}$bius category, $(C\times D)_1 \simeq C_1 \times D_1$, and we have a natural isomorphism of algebras
$$([(C\times D)_1, R], \star)  \ \simeq \ ([C_1, R], \star) \otimes
 ([D_1, R], \star).$$
Moreover, we have that $\xi_{C \times D}(f,g) \ = \ \xi_C f  \xi_D g$ and thus
$$\mu_{C \times D}(f,g)\ = \ \mu_C f  \mu_D g.$$
}
\end{prop}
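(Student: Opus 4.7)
The plan is to prove the four assertions essentially in order, with the key observation that decompositions in the product category factor naturally as pairs of decompositions in the factors.

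First, the equality $(C\times D)_1 \simeq C_1 \times D_1$ is immediate from the definition of the product category: a morphism $(a_1,b_1) \to (a_2,b_2)$ is by definition a pair $(f,g)$ with $f \in C(a_1,a_2)$ and $g \in D(b_1,b_2)$, and composition is componentwise. The identities of $C\times D$ are exactly the pairs $(1_x,1_y)$. Consequently there is a natural bijection $\mathrm{D}_n(f,g) \simeq \mathrm{D}_nf \times \mathrm{D}_ng$ for every $n \geq 1$, sending a decomposition $((h_1,k_1),\ldots,(h_n,k_n))$ to the pair of $n$-tuples $((h_1,\ldots,h_n),(k_1,\ldots,k_n))$.

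Next I verify that $C\times D$ is a M\"obius category using Theorem \ref{mej}. Condition (i): $\mathrm{PD}_2(f,g) \subseteq \mathrm{D}_2(f,g) \simeq \mathrm{D}_2f \times \mathrm{D}_2g$, which is finite since both factors are M\"obius. Condition (ii): if an identity $(1_x,1_y) = (h_n,k_n)\cdots(h_1,k_1)$ admits a decomposition, projecting to each factor shows $1_x = h_n\cdots h_1$ and $1_y = k_n\cdots k_1$; since identities in $C$ and $D$ admit no proper decomposition, every $h_i = 1_x$ and every $k_i = 1_y$, so every $(h_i,k_i)$ is an identity. Condition (iii): if $(f,g)$ fixes $(h,k)$, say $(f,g)(h,k) = (h,k)$, then $f$ fixes $h$ in $C$ and $g$ fixes $k$ in $D$, forcing $f$ and $g$ to be identities.

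For the algebra isomorphism (using that $C, D$ are finite so $C_1, D_1$ are finite sets), define
$$\Phi: [C_1,R] \otimes [D_1,R] \longrightarrow [(C\times D)_1, R], \qquad \Phi(\alpha \otimes \beta)(f,g) = \alpha(f)\beta(g),$$
which is an $R$-module isomorphism by finiteness. To see it is multiplicative, use the natural bijection $\mathrm{D}_2(f,g) \simeq \mathrm{D}_2 f \times \mathrm{D}_2 g$:
$$\Phi(\alpha\otimes\beta)\star\Phi(\alpha'\otimes\beta')(f,g) \ = \ \sum_{((h_1,k_1),(h_2,k_2)) \in \mathrm{D}_2(f,g)} \alpha(h_1)\alpha'(h_2)\,\beta(k_1)\beta'(k_2),$$
which factors as $(\alpha\star\alpha')(f)\cdot(\beta\star\beta')(g) = \Phi\bigl((\alpha\star\alpha')\otimes(\beta\star\beta')\bigr)(f,g)$. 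The units also match: $\Phi(1_C \otimes 1_D) = 1_{C\times D}$.

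Finally, the M\"obius function assertion is an immediate corollary. Under $\Phi$, the element $\xi_C \otimes \xi_D$ maps to the function $(f,g) \mapsto \xi_C(f)\xi_D(g) = 1 = \xi_{C\times D}(f,g)$. Since $\Phi$ is an algebra isomorphism, it preserves $\star$-inverses, so
$$\mu_{C\times D} \ = \ \Phi(\mu_C \otimes \mu_D), \qquad \text{i.e.,} \qquad \mu_{C\times D}(f,g) = \mu_C(f)\,\mu_D(g).$$
The main (mild) obstacle is condition (i) of Theorem \ref{mej} for the product, since a priori one worries that long "mixed" decompositions with identities in one coordinate could proliferate; but the finiteness of $\mathrm{D}_2 f$ (which follows from the lemma relating $|\mathrm{D}_n f|$ to $|\mathrm{PD}_k f|$) plus the natural bijection $\mathrm{D}_2(f,g) \simeq \mathrm{D}_2 f \times \mathrm{D}_2 g$ settles this cleanly.
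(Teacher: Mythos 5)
The paper states this proposition without proof, so there is nothing to compare your argument against; on its own merits your proof is correct and follows the natural route. The key observation, the bijection $\mathrm{D}_n(f,g)\simeq \mathrm{D}_nf\times\mathrm{D}_ng$ (and deliberately \emph{not} the false identification $\mathrm{PD}_n(f,g)\simeq\mathrm{PD}_nf\times\mathrm{PD}_ng$), is exactly what is needed, and deriving $\mu_{C\times D}=\Phi(\mu_C\otimes\mu_D)$ from the algebra isomorphism rather than by counting proper decompositions is the clean way to finish. The only step stated more tersely than it deserves is condition (ii) of Theorem \ref{mej}: from ``identities in $C$ admit no proper decomposition'' you conclude that \emph{every} component $h_i$ of a decomposition of $1_x$ is an identity, which requires a short induction on the length $n$ (some component is an identity, delete it, and apply the inductive hypothesis to the resulting $(n-1)$-decomposition), or alternatively the observation that each $h_i$ is forced to be an isomorphism and hence an identity in a M\"obius category. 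With that one line supplied, the proof is complete.
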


Given a non-identity morphism $f:x\longrightarrow y$  in $C$, we construct the augmented simplicial set $D_{\ast}f$, a variant of the classical bar resolution,  as follows. For $n \geq -1$ we have inclusions
$$D_{n}f\ \subseteq  \ [[0,n+2],C]_{x , y},$$
where a functor $\ F\in [[0,n+2],C]_{x , y}\ $ is in $ D_{n}f $ if and only if
$$F(0 \leq n+2) \ = \ f.$$
With the notation the decomposition of morphism of Definition \ref{ddd}, we have  that:
$$D_{n}f \ = \ \mathrm{D}_{n+2}f \ \ \ \ \ \ \mbox{and} \ \ \ \ \ \ D_{n}f^{\natural}\ = \ \mathrm{PD}_{n+2}f.$$

\begin{thm}{\em The  M$\ddot{\mbox{o}}$bius function $\mu \in [C_1, R]$ of a M$\ddot{\mbox{o}}$bius category $C$ is given on a non-isomorphism $f$ in $C$ by
$$\mu f \ = \ \widetilde{\chi}D_{\ast}f \ = \ \sum_{n \geq 0}(-1)^n\mathrm{rank}\widetilde{\mathrm{H}}_n|D_{\ast}f|. $$
}
\end{thm}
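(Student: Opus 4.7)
The plan is to reduce the statement to a direct combinatorial computation using Theorem \ref{mier}, followed by the standard passage from the chain-level Euler characteristic to the homological one.

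First I would unpack the definition of $\widetilde{\chi} D_{\ast}f$. By analogy with the reduced Euler characteristic used earlier in the paper for augmented simplicial objects, this is the alternating sum over non-degenerate simplices:
\[
\widetilde{\chi} D_{\ast}f \ = \ \sum_{n \geq -1} (-1)^n \big| D_n f^{\natural} \big|.
\]
The key identification, already recorded in the paragraph preceding the theorem, is that a non-degenerate $n$-simplex of $D_{\ast}f$ is a proper $(n+2)$-decomposition of $f$; that is, $D_n f^{\natural} = \mathrm{PD}_{n+2}f$. This is where the whole argument lives: a functor $F:[0,n+2]\to C$ with $F(0\leq n+2)=f$ corresponds to a tuple $(f_1,\dots,f_{n+2})$ with $f_{n+2}\cdots f_1 = f$, and $F$ is non-degenerate exactly when no $f_i$ is an identity (any identity component arises from a degeneracy, and the remaining conditions of Definition \ref{ddd} are preserved).

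With this identification in hand I would reindex by $m=n+2$ and use that $(-1)^{m-2}=(-1)^m$ to obtain
\[
\widetilde{\chi} D_{\ast}f \ = \ \sum_{n \geq -1} (-1)^n \big| \mathrm{PD}_{n+2} f \big| \ = \ \sum_{m \geq 1} (-1)^m \big| \mathrm{PD}_m f \big|.
\]
Since $C$ is M\"obius each $\mathrm{PD}_m f$ is finite and the sum is finite as well (by the argument in Theorem \ref{mej} a proper decomposition of $f$ cannot be longer than $|\mathrm{PD}_2 f|+1$), so the series makes sense. The right-hand side is exactly the explicit formula for $\mu f$ given in Theorem \ref{mier}, yielding the first equality $\mu f = \widetilde{\chi} D_{\ast}f$.

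For the second equality I would invoke the standard fact from combinatorial algebraic topology (the same fact cited in \cite{koz} and used earlier in the paper for the simplicial complex $CX$ of a finite poset): for any augmented simplicial set with finitely many non-degenerate simplices in each dimension and only finitely many non-zero dimensions, the alternating sum of the numbers of non-degenerate simplices equals the alternating sum of the ranks of the reduced homology of the geometric realization. Applying this to $D_{\ast}f$ gives $\widetilde{\chi} D_{\ast}f = \sum_{n \geq 0}(-1)^n \mathrm{rank}\,\widetilde{\mathrm{H}}_n|D_{\ast}f|$, completing the proof. The only real subtlety is bookkeeping — making sure the indexing shift between $D_n$ and $\mathrm{PD}_{n+2}$ lines up with the sign conventions for the reduced Euler characteristic — but this is the same kind of shift already handled successfully in the proof of Theorem \ref{t2}.
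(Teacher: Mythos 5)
Your proposal is correct and follows essentially the same route as the paper: both arguments combine the explicit formula $\mu f = \sum_{n\geq 1}(-1)^n|\mathrm{PD}_nf|$ from Theorem \ref{mier} with the identification $D_nf^{\natural}=\mathrm{PD}_{n+2}f$ and the reindexing $(-1)^{n+2}=(-1)^n$, then appeal to the standard fact that the alternating count of non-degenerate simplices computes the alternating sum of ranks of reduced homology. If anything, your write-up is slightly more careful than the paper's, since you explicitly justify the non-degeneracy identification and the finiteness of the sum.
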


\begin{proof} By Theorem \ref{mier} that
$$\mu f \ = \ \sum_{n \geq 1 }(-1)^n \big| \mathrm{PD}_nf \big| \ = \ \sum_{n \geq 0}(-1)^n\mathrm{rank}\widetilde{\mathrm{H}}_n|D_{\ast}f|.$$
From this identity and the previous remarks we get that
$$\mu f \ = \ \sum_{n \geq 1 }(-1)^n \big| \mathrm{PD}_nf \big| \ = \ \sum_{n \geq -1 }(-1)^n \big| \mathrm{PD}_{n+2}f \big| \ = \
\sum_{n \geq -1 }(-1)^n \big| D_{n}f^{\natural} \big|  \ = \ \widetilde{\chi}D_{\ast}f.$$
\end{proof}

\begin{cor}{\em
The reduced homology groups $\widetilde{\mathrm{H}}_n|D_{\ast}f|$  of the space $|D_{\ast}f|$ are obtained from the differential complex over $\mathbb{Z}$ such that:
\begin{itemize}
\item It is generated in degree $n$ by the $(n+2)$-decompositions $[f_1,...,f_{n+2}]$ of $f$.
\item  The differential $d$ is given on generators by:
 $$d[f]=0 \ \ \  \mbox{in degree} -1,$$ and it is given in degree $n \geq 0$ by
$$d[f_1,...,f_{n+2}] \ = \ \sum_{k=1}^{n+1}(-1)^k[f_1,...,f_{k+1}f_k ,...f_{n+2}]    .$$
\end{itemize}
}
\end{cor}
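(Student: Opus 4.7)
The plan is to identify $D_*f$ as an augmented simplicial set, read off its face maps explicitly from Theorem \ref{t77}, and then invoke the standard fact that the reduced homology of the geometric realization of an augmented simplicial set coincides with the homology of its unnormalized Moore chain complex.

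First, observe that in a M\"obius category the only isomorphisms are identities, so the groupoids $D_n f$ are discrete and $D_*f$ is genuinely an augmented simplicial set (of sets). I then unpack the simplicial structure inherited from Theorem \ref{t77}: for $g:[0,n]\longrightarrow [0,m]$ in $\Delta$ the simplicial map acts by $\widehat{g}(F)=F\circ g_e$, where $g_e$ is the extension pinning the endpoints $0\mapsto 0$ and $n+2\mapsto m+2$. Applied to the $k$-th coface $\delta_k:[0,n-1]\longrightarrow [0,n]$ (skipping $k$), a direct computation shows that the extension $\widetilde{\delta}_k:[0,n+1]\longrightarrow [0,n+2]$ skips the index $k+1$. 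Under the identification $F \leftrightarrow [f_1,\dots,f_{n+2}]$ with $F(i-1\leq i)=f_i$, the face map $d_k$ therefore sends
$$[f_1,\dots,f_{n+2}] \ \longmapsto \ [f_1,\dots,f_k,\,f_{k+2}f_{k+1},\,f_{k+3},\dots,f_{n+2}],$$
namely it composes the two adjacent morphisms in positions $k+1$ and $k+2$; and the unique augmentation $D_0 f\longrightarrow D_{-1}f=\{f\}$ collapses every $2$-decomposition to $f$.

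Next I invoke the standard fact (see \cite{goe, m, w}) that for any augmented simplicial set $X$ the reduced homology $\widetilde{\mathrm{H}}_n|X|$ equals the homology of the augmented Moore complex: in degree $n\geq 0$ the chain group is $<X_n>$ with boundary $\partial=\sum_{k=0}^n(-1)^k d_k$, and in degree $-1$ the chain group is $<X_{-1}>$ with trivial boundary. Specializing to $X=D_*f$ and reindexing the alternating sum by $j=k+1$ I obtain
$$\partial[f_1,\dots,f_{n+2}] \ = \ \sum_{j=1}^{n+1}(-1)^{j-1}[f_1,\dots,f_{j+1}f_j,\dots,f_{n+2}],$$
which is exactly the \emph{negative} of the differential $d$ stated in the corollary. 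Since the assignment $\phi_n=(-1)^n\,\mathrm{id}$ is a chain isomorphism from $(<D_*f>,\partial)$ to $(<D_*f>,d)$, the two complexes compute the same reduced homology, and the corollary follows.

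The only bookkeeping subtlety is the shift by $2$ between the simplicial degree $n$ and the length $n+2$ of a decomposition in $D_n f=\mathrm{D}_{n+2}f$; once this is absorbed into the reindexing $j=k+1$, the result reduces to the textbook correspondence between the geometric realization of an augmented simplicial set and its Moore chain complex. No deeper obstruction is expected.
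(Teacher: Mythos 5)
Your proof is correct and is exactly the argument the paper leaves implicit: the corollary is stated there without proof, being the routine translation of the augmented simplicial set $D_{\ast}f$ into its (unnormalized, augmented) Moore complex. Your explicit computation that the extension of the coface $\delta_k$ is $\delta_{k+1}$, hence that $d_k$ composes the adjacent morphisms $f_{k+1},f_{k+2}$, and your observation that the stated differential is $-\partial$ (which leaves the homology unchanged), supply precisely the details the paper omits.
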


We close this section with a brief discussion of the relation between the M$\ddot{\mbox{o}}$bius theory for posets and the M$\ddot{\mbox{o}}$bius theory for finite decomposition categories.

\begin{prop}{\em
Let $C$ be a M$\ddot{\mbox{o}}$bius category.  The relation $\leq$ on $C_1$ given by
$$f\leq g \ \  \mbox{if and only if there is morphism}\ h\in C_1 \  \mbox{such that}\ g=hf,$$
is a partial order on $C_1$.
}
\end{prop}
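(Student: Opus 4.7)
The plan is to verify the three axioms of a partial order — reflexivity, transitivity, and antisymmetry — for the relation $\leq$ on $C_1$. Reflexivity and transitivity are essentially formal, so the content lies in antisymmetry, where I will invoke the characterization of Möbius categories from Theorem \ref{mej}.

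For reflexivity, I would simply observe that $f = 1_{tf}\circ f$, so taking $h = 1_{tf}$ witnesses $f \leq f$. For transitivity, if $f \leq g$ and $g \leq k$ then there exist $h_1, h_2 \in C_1$ with $g = h_1 f$ and $k = h_2 g$; composing yields $k = (h_2 h_1) f$, so $f \leq k$.

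The main obstacle, and the only place the Möbius hypothesis is genuinely needed, is antisymmetry. Suppose $f \leq g$ and $g \leq f$, so there exist $h, h' \in C_1$ with $g = hf$ and $f = h'g$. Substituting gives $f = h'hf$ and $g = hh'g$, meaning $h'h$ fixes $f$ and $hh'$ fixes $g$ in the sense defined just before Theorem \ref{mej}. That theorem then forces $h'h$ and $hh'$ to be identity morphisms, which makes $h$ and $h'$ mutually inverse isomorphisms. Finally I invoke the remark following Definition \ref{ddd} (``in a Möbius category the only isomorphisms are the identities''), so $h$ and $h'$ are identities, yielding $f = g$.

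Assembling these three steps gives the partial order structure. The argument is short because all the combinatorial difficulty has already been absorbed into Leroux's Theorem \ref{mej}; the role of this proposition is mostly to repackage that theorem in order-theoretic language, setting the stage for comparison with the poset-based Möbius theory developed earlier.
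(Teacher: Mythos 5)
Your proposal is correct and follows essentially the same route as the paper: reflexivity and transitivity are formal, and antisymmetry is obtained by showing the composite $h'h$ fixes $f$, invoking Theorem \ref{mej} to make it an identity, and then using the fact that identities admit no proper decomposition (equivalently, that the only isomorphisms in a M\"obius category are identities) to conclude $h$ and $h'$ are themselves identities. The only cosmetic difference is that the paper passes directly from $kh=1$ to $k=h=1$ via the no-proper-decomposition condition, whereas you route through the observation that $h,h'$ are mutually inverse isomorphisms; these are equivalent.
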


\begin{proof}
Reflexivity and transitivity are valid for arbitrary categories. If $f \leq g \leq f,$
then there are morphisms $h$ and $k$ in $C_1$ such that $g=hf$ and $f=kg$. Thus $f=(kh)f$, and since $C$
is M$\ddot{\mbox{o}}$bius and $kh$ fixes $f$, then  $kh$ is an identity $1$ in $C$. Moreover, since identities in $C$ can not be properly decomposed, then $k=h=1$  by Theorem \ref{mej}, and thus $f=g$.
\end{proof}

\begin{defn}{\em
A category $C$ is right cancellative if for any morphisms $f,g,h$ in $C$ we have:
$$gf=hf \ \ \ \ \mbox{implies that}  \ \ \ \ g=h.$$
}
\end{defn}
\begin{thm}{\em
Let $C$ be a cancellative  M$\ddot{\mbox{o}}$bius   category. The convolution algebra
$[C_1, R]$ is isomorphic to the subalgebra  $$[\mathbb{I}_{(C_1, \leq )}, R]_c \ \subseteq \ [\mathbb{I}_{(C_1, \leq )}, R]$$ consisting of maps
$$\alpha : \mathbb{I}_{(C_1, \leq )} \longrightarrow R \ \ \  \mbox{such that} \ \ \  \alpha[f, gf]=\alpha[1,g]  \ \ \ \mbox{for all morphisms} \ \ \ f,g \in C_1.$$

}
\end{thm}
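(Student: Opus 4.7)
The plan is to construct an explicit ring isomorphism $\Phi:([C_1,R],\star)\to ([\mathbb{I}_{(C_1,\leq)},R]_c,\star)$ by the formula
\[
\Phi(\alpha)[f,h]\ =\ \alpha(g),
\]
where $g$ is the unique morphism with $h=gf$. Uniqueness of $g$ follows from right cancellation applied to $gf=g'f$, while existence is the very definition of $f\leq h$. Thus $\Phi(\alpha)$ is a well-defined element of $[\mathbb{I}_{(C_1,\leq)},R]$, and the tautological identity $\Phi(\alpha)[f,gf]=\alpha(g)=\Phi(\alpha)[1_{sg},g]$ places it in the subalgebra $[\mathbb{I}_{(C_1,\leq)},R]_c$.

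Next I would produce an inverse $\Psi$ by $\Psi(\beta)(g)=\beta[1_{sg},g]$. The composition $\Psi\circ\Phi$ sends $\alpha$ to the map $g\mapsto \Phi(\alpha)[1_{sg},g]=\alpha(g)$, using $g=g\cdot 1_{sg}$. Conversely, $\Phi\circ\Psi$ sends $\beta$ to the map $[f,gf]\mapsto \beta[1_{sg},g]$, which equals $\beta[f,gf]$ by the very condition defining the subalgebra. Hence $\Phi$ is an $R$-linear bijection.

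To prove $\Phi$ is multiplicative I would expand
\[
(\Phi(\alpha)\star\Phi(\beta))[f,gf]\ =\ \sum_{f\leq k\leq gf}\Phi(\alpha)[f,k]\,\Phi(\beta)[k,gf].
\]
Each $k$ in the interval equals $pf$ for a unique morphism $p$, and for this $k$ there is a unique $q$ with $gf=qk=qpf$, forcing $g=qp$ by right cancellation. This sets up a bijection between the index set of the sum and $\mathrm{D}_2g$, so the right-hand side equals $\sum_{(p,q)\in \mathrm{D}_2g}\alpha(p)\beta(q)=(\alpha\star\beta)(g)=\Phi(\alpha\star\beta)[f,gf]$. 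The unit condition $\Phi(1)=\epsilon$ is immediate: $\Phi(1)[f,h]=1(g)$ is $1$ when $g$ is an identity (forcing $h=f$) and $0$ otherwise.

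The main technical obstacle is precisely this bijection between elements of the interval $[f,gf]$ and $2$-decompositions of $g$, which carries the homomorphism property. Without right cancellation the assignment $k\mapsto p$ would fail to be single-valued or injective, and the counts on the two sides of the product identity would diverge. Some bookkeeping with sources and targets is also required: $f$ and $h$ in a common interval share a source, the unique $g$ satisfying $h=gf$ has $sg=tf$ and $tg=th$, and $1_{sg}$ is the minimum of the down-set of $g$ in $(C_1,\leq)$, ensuring that all products in sight are defined. Once these compatibilities are tracked, the remaining verifications are routine.
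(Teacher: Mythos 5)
Your proposal is correct and follows essentially the same route as the paper: the same map $\alpha\mapsto\alpha(g)$ on $[f,gf]$, the same use of right cancellation for well-definedness, the inverse $\beta\mapsto\beta[1,\cdot]$ for bijectivity, and the interval-to-$2$-decomposition bijection for multiplicativity. The only (harmless) difference is that you verify the product identity on arbitrary intervals $[f,gf]$, whereas the paper checks it only on intervals $[1,g]$ and lets the defining invariance condition of the subalgebra do the rest.
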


\begin{proof}Consider the map $[C_1, R] \longrightarrow [\mathbb{I}_{(C_1, \leq )}, R]$ sending a map
$\beta: C_1 \longrightarrow  R $ to the map $\widehat{\beta}: \mathbb{I}_{(C_1, \leq )} \longrightarrow  R$ given by $$\widehat{\beta}[f,hf] \ = \ \beta(h).$$
The map $\widehat{\beta}$ is well-defined since $C$ is cancellative and thus $f \leq g$ if and only if there exists an unique morphism $h=\frac{f}{g}$ such that $g=hf.$ It is clear that the image of $\widehat{\beta}$ is included in $[\mathbb{I}_{(C_1, \leq )}, R]_c$, we show that is actually equal to
$[\mathbb{I}_{(C_1, \leq )}, R]_c$. For $\alpha \in [\mathbb{I}_{(C_1, \leq )}, R]_c$, let $\beta \in [C_1, R]$ be given by $\beta(f)=\alpha[1,f]$. Then we have that
$$\widehat{\beta}[f,hf] \ = \ \beta(h) \ = \ \alpha[1,h] \ = \ \alpha[f,hf], \ \ \ \ \ \mbox{that is}
\ \ \ \ \ \widehat{\beta}=\alpha.$$ On the other hand assume that $\widehat{\beta_1} = \widehat{\beta_2}$, then $$\beta_1(h)\ = \ \widehat{\beta_1}[1,h] \ = \  \widehat{\beta_2}[1,h] \ = \ \beta_2(h).$$
So the map $\beta \longrightarrow \widehat{\beta}$ is injective. It remains to show that it is also an algebra map.
$$\widehat{\beta_1}\widehat{\beta_2}[1,g] \ = \  \sum_{1 \leq f \leq g}\widehat{\beta_1}[1,f]\widehat{\beta_2}[f,g]
\ = \  \sum_{1 \leq f \leq g}\beta_1(f)\beta_2(\frac{f}{g}) \ = $$
$$ \sum_{hf=g}\beta_1(f)\beta_2(h)\ = \  \beta_1\beta_2(g) \ = \ \widehat{\beta_1\beta_2}[1,g] .$$
\end{proof}

\section{Essentially Finite Decomposition Categories}

The notion of M$\ddot{\mbox{o}}$bius categories discussed in the previous section is invariant under isomorphisms of categories, but fails to be invariant under equivalence of categories. Indeed, as we have already remarked M$\ddot{\mbox{o}}$bius categories are skeletal. In this section we study a variant notion for which this issue  is circumvented.\\

Let $C$ be a category and $\overline{C}_1$ be the set of isomorphisms classes of morphisms in $C$. Recall that morphisms $f,g \in C_1$ are
isomorphic if they fit into a commutative diagram
\[\xymatrix @R=.35in  @C=.65in
{x \ar[r]^{f} \ar[d] &  y \ar[d] \\
z \ar[r]^{g}  & w  }\]
where the vertical arrows are isomorphisms.\\

\begin{defn}\label{bbbb}{\em
Fix $\overline{f} \in \overline{C}_1$. A $n$-decomposition of $\overline{f}$, for $n \geq 1$, is a $n$-tuple
$(f_1,...,f_n)$  of morphisms in $C$ such that there are isomorphisms $\alpha$ and $\beta$ in $C$ for which  $$f_n.....f_1\ = \ \beta^{-1} f\alpha,$$
that is a $n$-decompostion of $\overline{f}$ is given by a commutative diagram of the form
 \[\xymatrix @R=.4in  @C=.8in
{x_0 \ar[r]^{f_1} \ar[d]_{\alpha} &  x_1 \ar[r]^{f_2\ \ \ \ \ } & \ \ \ \ \ \ \ .... \ \ \ \ \ar[r]^{\ \ f_{n-1}} & x_{n-1}  \ar[r]^{f_n} & x_n \ar[d]_{\beta} \\
x \ar[rrrr]^{f}  &    &   &   & y} \]}
\end{defn} with $\alpha$ and $\beta$ isomorphisms.\\

Let $\mathrm{D}_n\overline{f} \ $ be the groupoid whose objects are $n$-decompositions of $\overline{f}$, and whose morphisms are commutative diagrams of the form
\[\xymatrix @R=.3in  @C=.6in
{ x_0 \ar[r]^{f_1} \ar[d]  &  x_1 \ar[r]^{f_2 \ \ \ \ \ \ \ } \ar[d] & \ \ \ \ \ \ \ .... \ \ \ \   \ar[r]^{\ \ \ \ f_{n-1}}  & x_{n-1} \ar[d] \ar[r]^{f_n} & x_n \ar[d] \\
z_0 \ar[r]^{\ g_1}      &  z_1 \ar[r]^{\ g_2 \ \ \ \ \ \ \ }  & \ \ \ \ \ \ \ .... \ \ \ \  \ar[r]^{\ \ \ \ \ g_{n-1}} &  z_{n-1} \ar[r]^{\ g_n} &   z_n  } \]
where the top and bottom of the diagram are $n$-decompositions of $\overline{f}$ and the vertical arrows are isomorphisms.\\

\begin{defn}\label{cccc}{\em
For $n \geq 2$, a decomposition $(f_1,...,f_n)$ of $\overline{f}$ is called proper if none of the morphisms $f_i$ is an isomorphism.
For $n \geq 2$, we let $\mathrm{PD}_n\overline{f}$ be the full subgroupoid of $\mathrm{D}_n\overline{f}$ whose objects are proper decompositions.
We set $\mathrm{PD}_1\overline{f} = \mathrm{D}_1\overline{f} \ $  and
$$\mathrm{D}\overline{f} \ = \ \bigsqcup_{n \geq 1} \mathrm{D}_n\overline{f}\ \ \ \ \ \mbox{and} \ \ \ \ \ \mathrm{PD}\overline{f} \ = \ \bigsqcup_{n \geq 1} \mathrm{PD}_n\overline{f},$$
i.e. $(\mathrm{PD}\overline{f}) \ \ \mathrm{D}\overline{f} $  is the groupoid of all (proper) decompositions of $\overline{f}$.}
\end{defn}

\begin{defn}{\em An essentially finite decomposition category $C$ is a category such that $\overline{\mathrm{PD}} \overline{f}$ is a finite set for  $\overline{f} \in \overline{C}_1$.
}
\end{defn}

\begin{lem}{\em If $\overline{\mathrm{PD}}\overline{f}$ is a finite set, then $\overline{\mathrm{D}}_n\overline{f}$ is a finite set for  $n \geq 1$ and we have that:
$$\big| \overline{\mathrm{D}}_n\overline{f}\big| \ = \ \sum_{k \geq 1}^n{ n \choose k} \big| \overline{\mathrm{PD}}_{k}\overline{f}\big| \ \ \ \ \ \ \mbox{and}
\ \ \ \ \ \ \big|\overline{\mathrm{PD}}_{n}\overline{f} \big| \ = \ \sum_{k \geq 1}^n (-1)^{n-k}{n \choose k} \big| \overline{\mathrm{D}}_k\overline{f} \big|.$$
}
\end{lem}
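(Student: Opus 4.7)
The plan is to adapt the proof of the unbarred analogue from Section 7 to the setting of iso classes. First I would establish a bijection
$$\overline{\mathrm{D}}_n\overline{f} \ \ \longleftrightarrow \ \ \bigsqcup_{k=1}^{n}\binom{[n]}{k}\times \overline{\mathrm{PD}}_k\overline{f},$$
sending the class of $(f_1,\dots,f_n)$ to the pair $(A,\sigma)$ where $A=\{i\in[n]:f_i\text{ is not an isomorphism}\}$ records the non-isomorphism positions and $\sigma$ is the iso class of the reduced proper $|A|$-decomposition obtained by absorbing the isomorphism components.

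The key technical step is making the absorption procedure canonical on iso classes. Given $(f_1,\dots,f_n)$ with $f_i$ an isomorphism, the choice of vertical arrows $\phi_{i-1}=1$, $\phi_i=f_i^{-1}$, and $\phi_j=\mathrm{id}$ otherwise exhibits a morphism of decompositions from $(f_1,\dots,f_n)$ to $(f_1,\dots,f_{i-1},1,f_{i+1}f_i,f_{i+2},\dots,f_n)$, replacing $f_i$ by an identity and pushing $f_i$ into $f_{i+1}$. Iterating this (and symmetrically absorbing any trailing block of iso positions to the left) yields a canonical representative of the iso class: identities at positions $[n]\setminus A$ and block products $\tilde f_{i_j}=f_{i_j}f_{i_j-1}\cdots f_{i_{j-1}+1}$ at positions $A=\{i_1<\cdots<i_k\}$. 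Each block product $\tilde f_{i_j}$ is a non-isomorphism, so the reduced tuple $(\tilde f_{i_1},\dots,\tilde f_{i_k})$ is a proper $k$-decomposition of $\overline{f}$.

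The inverse map is immediate: given $(A,\sigma)$, insert identities at $[n]\setminus A$ and a representative of $\sigma$ at positions $A$. The two maps are mutually inverse on iso classes because a morphism of $n$-decompositions restricts after canonicalization to a morphism of reduced proper $k$-decompositions, and conversely any morphism of reduced tuples extends uniquely by identities at $[n]\setminus A$ to a morphism of the $n$-decompositions. Counting then yields the first identity; finiteness of each $|\overline{\mathrm{D}}_n\overline{f}|$ follows since finiteness of $\overline{\mathrm{PD}}\overline{f}=\bigsqcup_k\overline{\mathrm{PD}}_k\overline{f}$ bounds each $|\overline{\mathrm{PD}}_k\overline{f}|$. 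The second identity follows from the first by standard binomial inversion: the triangular matrices $\bigl[\binom{n}{k}\bigr]$ and $\bigl[(-1)^{n-k}\binom{n}{k}\bigr]$ are mutually inverse.

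The main obstacle will be verifying rigorously that the absorption map is well-defined on iso classes---different orderings of the successive absorption operations should yield the same reduced iso class---and that the groupoid of morphisms of $n$-decompositions translates cleanly to the groupoid of reduced proper $k$-decompositions. A minor edge case is when $\overline{f}$ is itself an isomorphism class, where $A=\emptyset$ is permissible and contributes an extra $k=0$ term not present in the stated sum; the formula implicitly assumes $\overline{f}$ is non-iso.
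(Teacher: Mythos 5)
Your proposal is correct and follows essentially the same route as the paper: classify an $n$-decomposition by the $\binom{n}{k}$ possible positions of its $k$ non-isomorphism components, absorb the isomorphisms to reduce to a proper $k$-decomposition, and obtain the second identity by binomial (M\"obius) inversion. The paper only sketches this argument; your version supplies the explicit absorption morphisms and the well-definedness check on isomorphism classes, and your remark about the $A=\emptyset$ edge case is a fair observation about the convention $\mathrm{PD}_1\overline{f}=\mathrm{D}_1\overline{f}$.
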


\begin{proof}
The identity on the left is shown as follows. Out of the $n$ morphisms in a $n$-decomposition assume that $n-k$ are isomorphism, they can be placed in ${n \choose k} $ different positions. Such a $n$-decomposition is isomorphic to a $n$-decomposition
where the isomorphisms are replaced by identities and the remaining morphisms are not isomorphisms. Omitting the identity arrows
a $n$-decomposition of the latter type reduces to a proper $k$-decomposition. The identity on the right follows from M$\ddot{\mbox{o}}$bius inversion
formula.
\end{proof}

\begin{thm}{\em Assume $C$ is a category with $C(x,y)$ finite for $x,y \in C$. Then $C$ is an essentially finite decomposition category if and only if $C$ is an
isocyclic essentially locally finite category.
}
\end{thm}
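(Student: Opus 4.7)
The plan is to prove the two implications separately, using the finiteness of hom-sets throughout.

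For the forward direction, assume $C$ is essentially finite decomposition. I first establish isocyclicity. Suppose, for a contradiction, that some diagram $x\xrightarrow{f} y\xrightarrow{g} x$ contains an arrow that is not an isomorphism. A short calculation shows that if both $fg$ and $gf$ were isomorphisms, then $f$ would admit the left inverse $(gf)^{-1}g$ and the right inverse $g(fg)^{-1}$, and similarly for $g$, forcing both arrows to be isomorphisms; hence at least one of $fg$, $gf$ is not an isomorphism, and by symmetry we may set $h:=gf\in C(x,x)$ and assume $h$ is not an isomorphism. Since $C(x,x)$ is a finite monoid, the powers $h,h^2,h^3,\dots$ are eventually periodic: $h^M=h^{M+l}$ for some $M\geq 1$ and $l\geq 1$, whence $h^M=h^{M+jl}$ for every $j\geq 0$. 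The constant tuple $(h,h,\dots,h)$ of length $M+jl$ is then a proper $(M+jl)$-decomposition of the single morphism $h^M$. Since the length of a decomposition is a groupoid-invariant, these give infinitely many pairwise non-isomorphic elements of $\overline{\mathrm{PD}}\overline{h^M}$, contradicting the hypothesis. The remaining axioms of essentially locally finite follow: the cycle condition is immediate from isocyclicity, and each $\overline{z}\in[\overline{x},\overline{y}]$ is the middle-object class of a 2-decomposition of some $f\in C(x,y)$, so $[\overline{x},\overline{y}]$ embeds into $\bigsqcup_{f\in C(x,y)}\overline{\mathrm{D}}_2\overline{f}$, which is finite since $C(x,y)$ is finite and each $\overline{\mathrm{D}}_2\overline{f}$ is finite by the preceding lemma.

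For the converse, assume $C$ is isocyclic and essentially locally finite, and fix $f:x\to y$. Any proper $n$-decomposition $x_0\xrightarrow{f_1}x_1\to\cdots\to x_n$ has intermediate objects whose classes $\overline{x}_0,\dots,\overline{x}_n$ all lie in $[\overline{x},\overline{y}]$, a finite set. Moreover these classes are pairwise distinct: if $\overline{x}_i=\overline{x}_j$ with $i<j$, an isomorphism $\phi:x_j\to x_i$ yields a configuration $x_i\xrightarrow{f_{i+1}} x_{i+1}\xrightarrow{\phi\circ f_j\circ\cdots\circ f_{i+2}} x_i$ (with the second arrow equal to $\phi$ when $j=i+1$), and isocyclicity forces $f_{i+1}$ to be an isomorphism, violating properness. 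Hence $n+1\leq |[\overline{x},\overline{y}]|$, providing a uniform bound on $n$. For each such $n$ and each choice of class tuple $(\overline{x}_0,\dots,\overline{x}_n)$, picking representatives $x_i$ reduces the counting problem to tuples $(f_1,\dots,f_n)\in\prod_{i=0}^{n-1} C(x_i,x_{i+1})$ satisfying the composition constraint, a finite set. Summing over the finitely many values of $n$ and the finitely many class tuples shows $\overline{\mathrm{PD}}\overline{f}$ is finite.

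The main obstacle is the isocyclicity step in the forward direction: one must turn the existence of a single non-isomorphism in a cycle into infinitely many non-isomorphic proper decompositions of some fixed morphism, using only the finiteness of a hom-monoid. The key ingredient is eventual periodicity of powers in $C(x,x)$, which yields the identity $h^M=h^{M+jl}$ and thereby proper decompositions of $h^M$ of unbounded length; the invariance of length under decomposition isomorphisms then guarantees these lie in distinct classes. The converse direction is comparatively routine once one sees that isocyclicity bounds the lengths of proper decompositions by the size of $[\overline{x},\overline{y}]$.
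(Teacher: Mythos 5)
Your proof is correct and follows essentially the same route as the paper: eventual periodicity of powers in the finite monoid $C(x,x)$ yields unboundedly long proper decompositions of $h^M$ for the forward direction, the interval $[\overline{x},\overline{y}]$ is covered by classes of $2$-decompositions, and the converse stratifies $\overline{\mathrm{PD}}\,\overline{f}$ by the pairwise-distinct intermediate isomorphism classes, whose number the finite interval bounds. You merely make explicit two steps the paper leaves implicit, namely the reduction from a general cycle $x \rightarrow y \rightarrow x$ to a non-invertible endomorphism, and the observation that decompositions of different lengths are automatically non-isomorphic.
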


\begin{proof}Let $C$ be an essentially finite decomposition category.
Let $f: x \longrightarrow  x$ be an endomorphism in $C$, we show that $f$ is an isomorphism. Since $C(x,x)$ is a finite set and
$$\{1, f, f^2, .... ,f^n,... \} \ \subseteq \ C(x,x),$$
there must be $n \geq 0$ and $k \geq 1$ such that $f^n=f^{n+k}$.
If $n=0$, then  $f^k=1$ for some $k \geq 1$ and thus $f$ is an isomorphism. If $n>0$ and  $f$ is not an isomorphism then the identities
$$f^n=f^{n+k}=f^{n +2k}=f^{n+3k}=.....$$
show that $f^n$ admits infinitely many decompositions,  a contradiction. Thus in any case $f$ has to be isomorphism.\\

Next we show that the intervals $[\overline{x},\overline{y}]$ in $\overline{C}$, with the order coming from Lemma \ref{qorder}, are finite sets. The map
$$\{\overline{x},\overline{y}\} \ \sqcup \ \bigsqcup_{f \in C(x,y)}\overline{\mathrm{PD}}_2\overline{f}  \ \longrightarrow \ [\overline{x},\overline{y}],$$
being the identity on $\{\overline{x},\overline{y}\}$ and sending a $2$-decomposition $x \longrightarrow z \longrightarrow y$ of $\overline{f}$ to $\overline{z}$, is surjective and
has a finite domain because $C$ is a finite decomposition category and $C(x,y)$ is a finite set. Thus  $[\overline{x},\overline{y}]$ is a finite set and
$C$ is an isocyclic essentially locally finite category.\\

Assume now that $C$ is an isocyclic essentially locally finite category. Isomorphisms in $C$ admit a unique decomposition modulo equivalences. Given a non-isomorphism $f \in C(x,y)$ and
a chain $$\overline{x} < \overline{x}_1 < ... <\overline{x}_{n-1} < \overline{y} \ \ \mbox{in}\ \ \overline{C}, \ \ \ \mbox{we let} \ \ \
\mathrm{PD}(\overline{f},\overline{x}_1 , ... ,\overline{x}_{n-1}  ) $$
be the full subcategory of $\mathrm{PD}\overline{f}$ whose objects are proper $n$-decompositions of $f$ with the specified isomorphism classes for the intermediate objects.  We have that
$$\mathrm{PD}\overline{f} \ = \
\bigsqcup_{n\geq 2}\ \bigsqcup_{\overline{x} < \overline{x}_1 < ... <\overline{x}_{n-1} < \overline{y}}
\mathrm{PD}(\overline{f},\overline{x}_1 , ... ,\overline{x}_{n-1}).$$
Since  $[\overline{x},\overline{y}]$ is a finite set, it has a finite number of increasing chains. Since there are a finite number of morphisms
between objects of $C$, the sets $\overline{\mathrm{PD}}(\overline{f},\overline{x}_1 , ... ,\overline{x}_{n-1})$ are finite. Therefore, $\overline{\mathrm{PD}}\overline{f}$
is a finite set, and $C$ an essentially finite decomposition category.
\end{proof}

Next we  associate a convolution algebra for a category $C$ such that $\overline{\mathrm{D}}_2\overline{f}$ is a finite set for $\overline{f} \in \overline{C}_1$. The convolution product $\star$ on $[\overline{C}_1, R]$ is given on
$\alpha, \beta \in [\overline{C}_1, R]$  by
$$\alpha \star \beta(\overline{f}) \ = \ \sum_{\overline{(f_1, f_2)} \in \overline{\mathrm{D}}_2\overline{f}}\alpha(\overline{f}_1)\beta(\overline{f}_2) .$$

Associativity for the product $\star$  is by no means obvious. Thus we impose a (fairly strong) condition on $C$ that guarantees associativity.
In the midst of the proof of Theorem \ref{igu} below we provided a weaker (though less intuitive) condition that also guarantees associativity.

\begin{defn}{\em We say that a category $C$ has the isomorphism filling property if any commutative diagram
\[\xymatrix @R=.2in  @C=.3in
{  & y_1 \ar[dr] & \\
 x \ar[ur] \ar[dr]   &  & y\\
   &  y_2 \ar[ur]   &      } \]
with $y_1$ and $y_2$ isomorphic objects in $C$, can be enhanced to a commutative diagram
\[\xymatrix @R=.2in  @C=.3in
{  & y_1 \ar[dr]  \ar[dd]& \\
 x \ar[ur] \ar[dr]   &  & y\\
   &  y_2 \ar[ur]   &      } \]
where the vertical arrow is an isomorphism.}
\end{defn}

\begin{exmp}{\em The category $\mathbb{I}$ of finite sets and injective maps is isomorphism filling.
}
\end{exmp}

\begin{exmp}{\em The category $\mathrm{vect}$ of finite dimensional vector spaces and injective linear maps is isomorphism filling.
}
\end{exmp}

These previous examples follow from a general construction which we proceed to describe.
Let $\mathbb{B}$ be the category of finite sets and bijections, $F: \mathbb{B} \longrightarrow \mathrm{set}$ a functor (a.k.a. a combinatorial species \cite{aguiar, cd, re2, j1}). Let $\mathrm{Par}_F : \mathbb{B} \longrightarrow \mathrm{set}$ be the species of $F$-colored partitions, i.e.  for $x \in \mathbb{B} $ we let $\mathrm{Par}_Fx$ be the set of pairs $(\pi, s)$ where $\pi$ is a partition of $x$ and $s$ assigns  to each $b \in \pi$ an element $s_b \in Fb.$ Consider the category  $\mathrm{EPar}_F$ whose objects are triples $(x,\pi,s)$ with $(\pi, s) \in \mathrm{Par}_F x.$ A morphism $$f :(x,\pi,s) \longrightarrow (z,\sigma,u) \ \ \ \ \mbox{in} \ \ \ \mathrm{EPar}_F$$
is an injective map $f: x \longrightarrow z$ such that there is a triple $(y, \rho, t) \in \mathrm{EPar}_F$ such that
$$(fx \sqcup y,f\pi \sqcup \rho ,fs \sqcup t)\ = \ (z,\sigma,u).$$

\begin{lem}{\em The category $\mathrm{EPar}_F$ is isomorphism filling.}
\end{lem}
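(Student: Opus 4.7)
The plan is to combine two structural features of $\mathrm{EPar}_F$: \emph{(a)} the defining condition on morphisms produces, for any $f:(x,\pi,s) \to (z,\sigma,u)$, a canonical coproduct decomposition $(z,\sigma,u) \simeq (x,\pi,s) \sqcup (y,\rho,t)$ in which the complement $(y,\rho,t)$ is uniquely determined by $f$; and \emph{(b)} the monoid of isomorphism classes of $\mathrm{EPar}_F$ under $\sqcup$ is the free commutative monoid on the atomic (single-block) classes, and is therefore cancellative.

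Writing $k = g_1 f_1 = g_2 f_2$ and applying (a) to the morphisms $f_i$ and $g_i$ I would obtain, for $i=1,2$,
\[ (y_i,\pi_i,s_i) \;\simeq\; (x,\pi,s) \sqcup (w_i,\rho_i,t_i), \qquad (y,\sigma,u) \;\simeq\; (x,\pi,s) \sqcup (w_i,\rho_i,t_i) \sqcup (v_i,\epsilon_i,r_i), \]
where $w_i = y_i \setminus f_i(x)$ and $v_i = y \setminus g_i(y_i)$, each carrying its restricted $F$-coloured partition. Feeding the hypothesis $y_1 \simeq y_2$ into (b) cancels the common summand $(x,\pi,s)$ to yield $(w_1,\rho_1,t_1) \simeq (w_2,\rho_2,t_2)$; comparing the two resulting decompositions of $(y,\sigma,u)$ and cancelling further gives $(v_1,\epsilon_1,r_1) \simeq (v_2,\epsilon_2,r_2)$.

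I would then build the desired vertical isomorphism $h:(y_1,\pi_1,s_1) \to (y_2,\pi_2,s_2)$ by gluing the partial bijection $f_2 \circ f_1^{-1}:f_1(x) \to f_2(x)$ to an isomorphism $\phi:(w_1,\rho_1,t_1) \to (w_2,\rho_2,t_2)$ on the complement; the result is a bijection of underlying sets that carries $\pi_1$ to $\pi_2$ and $s_1$ to $s_2$, hence an isomorphism in $\mathrm{EPar}_F$, and by construction $h f_1 = f_2$, which handles the left triangle of the enhanced diagram.

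The main obstacle, and the only delicate step, is arranging $g_2 h = g_1$ on the complement $w_1$: this forces the choice $\phi := (g_2|_{w_2})^{-1} \circ g_1|_{w_1}$, which is legitimate only once I identify the subset $g_1(w_1) \subseteq y$ with $g_2(w_2)$ as the same union of $\sigma$-blocks inside $y\setminus k(x)$. My plan for this step is to refine the cancellation argument of (b) down to the level of individual atomic blocks of $\sigma|_{y\setminus k(x)}$: the canonical decomposition assigns each such block to a unique atomic summand, and the $F$-colour data together with the free-monoid cancellation force the $g_i$ to mark out the same collection of atomic blocks as the $w_i$-piece. Once this block-level matching is established, $\phi$ is automatically a morphism of $F$-coloured partitioned sets, the right triangle $g_2 h = g_1$ commutes, and $h$ is the filling required by the isomorphism filling property.
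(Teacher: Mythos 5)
Your first two steps --- the canonical complement decomposition attached to each morphism of $\mathrm{EPar}_F$, and cancellation in the free commutative monoid of isomorphism classes under $\sqcup$ to get $(w_1,\rho_1,t_1)\simeq(w_2,\rho_2,t_2)$ --- are exactly the paper's argument, and they do correctly produce an isomorphism $h:y_1\to y_2$ with $hf_1=f_2$, i.e.\ the left triangle. (The paper's own proof in fact stops there: it never addresses the triangle involving the maps to $y$.)

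The gap is in the step you yourself flagged as delicate. The claim that $g_1(w_1)$ and $g_2(w_2)$ are \emph{the same} union of $\sigma$-blocks of $y\setminus k(x)$ does not follow from free-monoid cancellation and is false in general: cancellation only tells you the two complements are \emph{abstractly} isomorphic, not that they occupy the same blocks of $y$. Concretely, take $F$ the trivial species and all partitions into singletons, $x=\{1\}$, $y_1=y_2=\{1,2\}$, $y=\{1,2,3\}$, $f_1=f_2$ the inclusion of $\{1\}$, $g_1$ the inclusion of $\{1,2\}$, and $g_2$ sending $1\mapsto 1$, $2\mapsto 3$. The outer square commutes and $y_1\simeq y_2$, but $g_1(w_1)=\{2\}$ while $g_2(w_2)=\{3\}$, so your formula $\phi=(g_2|_{w_2})^{-1}\circ g_1|_{w_1}$ is not even defined, and indeed \emph{no} isomorphism $h$ satisfies $g_2h=g_1$ since $2\notin\operatorname{im}g_2$. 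So the right triangle cannot be arranged by any refinement of the block-matching argument; if the filling property is read as requiring both triangles, the statement itself fails for this diagram (and the same example already breaks the paper's earlier assertion that $\mathbb{I}$ is isomorphism filling). The honest conclusions are: (i) your proof is complete for the left triangle and matches the paper's; (ii) the right triangle is a genuine obstruction that neither your argument nor the paper's resolves, and your proposed fix rests on a false identification.
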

\begin{proof} Objects of $\mathrm{EPar}_F$ are triples $(x,\pi,s)$ which we denote just by $s$, since the map $s$ already includes the information about $x$ and $\pi$. The category $\mathrm{EPar}_F$ is monoidal with disjoint union as product, and it is complemented in the sense that for any morphism
$f :s \longrightarrow u$  there is a complement $u\setminus_{f} s \in \mathrm{EPar}_F$ such that
$$s \ \sqcup \ u\setminus_{f} s \ \simeq \ u.$$ Moreover, if we have morphisms $s \longrightarrow t_1 \ $ and $\ x \longrightarrow t_2$, with $t_1$ and $t_2$ isomorphic, then it follows that $t_1 \setminus s$ and $
t_2\setminus x$ are isomorphic objects and thus there is an isomorphism from $t_1$ to $t_2$ respecting the given morphisms.
\end{proof}

\begin{thm}\label{igu}{\em Let $C$ be an  isomorphism filling category with $\overline{\mathrm{D}}_2\overline{f}$ a finite set
for $\overline{f} \in \overline{C}_1$. The product $\star$ turns
$[\overline{C}_1, R]$ into an associative algebra with unit  $1 \in [\overline{C}_1, R]$ the map
such that $1(\overline{f})=1\ $ if $\ f$ is an identity morphism, and zero otherwise.}
\end{thm}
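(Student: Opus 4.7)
The strategy is to verify the unit axiom by direct inspection, and then to deduce associativity by exhibiting bijections between $\overline{\mathrm{D}}_3 \overline{f}$ and the index sets of $(\alpha \star \beta) \star \gamma(\overline{f})$ and $\alpha \star (\beta \star \gamma)(\overline{f})$, so that both products coincide with
\[ \sum_{\overline{(f_1,f_2,f_3)} \in \overline{\mathrm{D}}_3 \overline{f}} \alpha(\overline{f_1})\,\beta(\overline{f_2})\,\gamma(\overline{f_3}). \]
The isomorphism filling hypothesis will enter at exactly one point, namely the injectivity of these comparison bijections.

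For the unit, I will expand $1 \star \alpha(\overline{f})$ and observe that only those iso-classes $\overline{(f_1,f_2)}$ whose first slot represents an identity contribute. For such a decomposition one has $f_2 = \beta^{-1} f \alpha$ with $\alpha,\beta$ isomorphisms, and the iso-tuple $(\alpha, \alpha, \beta)$ exhibits $(f_1,f_2)$ as isomorphic to the canonical decomposition $(1_{\mathrm{src}\, f}, f)$. Hence only one term survives and equals $\alpha(\overline{f})$; the identity $\alpha \star 1 = \alpha$ is dual.

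For associativity, I will introduce the candidate bijections
\[ \Phi_L: \overline{(f_1,f_2,f_3)} \mapsto \bigl(\overline{(f_2 f_1,\, f_3)},\, \overline{(f_1,\,f_2)}\bigr), \qquad \Phi_R: \overline{(f_1,f_2,f_3)} \mapsto \bigl(\overline{(f_1,\, f_3 f_2)},\, \overline{(f_2,\,f_3)}\bigr). \]
Well-definedness is immediate since an iso-tuple witnessing $(f_1,f_2,f_3) \cong (f_1',f_2',f_3')$ restricts to iso-tuples witnessing the two 2-decomposition isomorphisms above. Surjectivity is handled by pasting: given representatives with $h_2 h_1 = \beta^{-1} f \alpha$ and $k_2 k_1 = \delta^{-1} h_1 \gamma$, the triple $(k_1, k_2, h_2 \delta)$ is a 3-decomposition of $\overline{f}$ whose $\Phi_L$-image is the prescribed pair.

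The main obstacle, and the place where isomorphism filling is indispensable, will be injectivity. If two 3-decompositions share a common $\Phi_L$-image, one is handed an outer iso-tuple $(\sigma_0, \sigma_2, \sigma_3)$ for $(f_2 f_1, f_3) \cong (f_2' f_1', f_3')$ and an inner iso-tuple $(\tau_0, \tau_1, \tau_2)$ for $(f_1, f_2) \cong (f_1', f_2')$, and these in general disagree at the source and at the object after $f_2$, so one cannot directly assemble a 4-tuple of isos. Fixing $\rho_0 = \sigma_0,\, \rho_2 = \sigma_2,\, \rho_3 = \sigma_3$, the missing middle iso $\rho_1:\mathrm{tgt}(f_1) \to \mathrm{tgt}(f_1')$ must satisfy $\rho_1 f_1 = f_1' \sigma_0$ and $f_2' \rho_1 = \sigma_2 f_2$, which is precisely the output of isomorphism filling applied to the two factorizations $X \xrightarrow{f_1} W \xrightarrow{\sigma_2 f_2} Z'$ and $X \xrightarrow{f_1' \sigma_0} W' \xrightarrow{f_2'} Z'$ of the common morphism $\sigma_2 f_2 f_1 = f_2' f_1' \sigma_0$, whose middle objects $W$ and $W'$ are isomorphic via $\tau_1$. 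A mirror application of isomorphism filling supplies the analogous middle iso for $\Phi_R$, and together these bijections yield $(\alpha \star \beta) \star \gamma = \alpha \star (\beta \star \gamma)$.
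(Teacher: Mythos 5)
Your proposal is correct and follows essentially the same route as the paper: the same comparison bijection $\overline{(f_1,f_2,f_3)} \mapsto (\overline{(f_2f_1,f_3)},\overline{(f_1,f_2)})$ onto the index set of the iterated product, with well-definedness by restricting iso-tuples, surjectivity by pasting, and injectivity as the one place where isomorphism filling supplies the missing middle isomorphism. The only difference is cosmetic: you spell out the unit axiom (which the paper declares clear) and treat both association orders symmetrically where the paper proves one and says the other is similar.
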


\begin{proof} The unit property is clear. Associativity follows from the identities
$$  (\alpha_1 \star \alpha_2) \star \alpha_3(\overline{f}) \ = \
\sum_{\overline{(f_1,f_2,f_3)} \in \overline{\mathrm{D}}_3\overline{f}} \alpha_1(\overline{f}_1)\alpha_2(\overline{f}_2)\alpha_3(\overline{f}_3)
\ = \ \alpha_1 \star (\alpha_2 \star \alpha_3)(\overline{f}) .$$
We show the left-hand side identity, the right-hand side identity is proven similarly. By definition we have that $$ (\alpha_1 \star \alpha_2) \star \alpha_3(\overline{f}) \ = \ \sum_{ \overline{(f_1,f_2)} \in \overline{\mathrm{D}}_2\overline{f} } (\alpha_1\star \alpha_2)(\overline{f}_1)\alpha_3(\overline{f}_2)\ = $$
$$\sum_{ \overline{(f_1,f_2)} \in \overline{\mathrm{D}}_2\overline{f} }\ \
\sum_{ \overline{(f_3,f_4)} \in \overline{\mathrm{D}}_2\overline{f}_1 } \alpha_1(\overline{f}_3)\alpha_1(\overline{f}_4)\alpha_3(\overline{f}_2). $$
Setting $$E \ = \ \left\{ (\overline{(f_1,f_2)},
 \overline{(f_3,f_4)} ) \ \ | \ \ \overline{(f_1,f_2)} \in \overline{\mathrm{D}}_2\overline{f} \  \ \mbox{and}\ \ \overline{(f_3,f_4)} \in \overline{\mathrm{D}}_2\overline{f}_1 \right\},$$
the desired result follows if there is a bijection
$\tau: \overline{\mathrm{D}}_3\overline{f}  \longrightarrow E$
making the diagram
\[\xymatrix @R=.5in  @C=.001in
{ \overline{\mathrm{D}}_3\overline{f}  \ar[dr] \ar[rr]^{\tau} & &  E \ar[dl] \\
& <\overline{C}_1> \otimes <\overline{C}_1> \otimes  <\overline{C}_1> &   }\]
commutative, where the diagonal arrows are given, respectively, by
$$\overline{(f_1,f_2,f_3)} \longrightarrow   \overline{f}_1 \otimes \overline{f}_2 \otimes \overline{f}_3, \ \ \ \ \ \mbox{and} $$
$$(\overline{(f_1,f_2)}, \overline{(f_3,f_4)} ) \longrightarrow  \overline{f}_3 \otimes \overline{f}_4 \otimes \overline{f}_2.$$
The desired map $\tau: \overline{\mathrm{D}}_3\overline{f}  \longrightarrow E$ is given by
$$\overline{(f_1,f_2,f_3)} \longrightarrow (\overline{(f_2f_1,f_3)}, \overline{(f_1,f_2)} ).$$
Let us show that $\tau$ is well-defined, surjective, and injective.\\

Let $\alpha_0, \alpha_1, \alpha_2, \alpha_3$ be isomorphisms from the appropriate objects so that
$$\overline{(f_1,f_2,f_3)} \ = \ \overline{(\alpha_1f_1\alpha_0^{-1},\alpha_2f_2\alpha_1^{-1},\alpha_3f_3\alpha_2^{-1})} .$$ The following
identities show that $\tau$ is well-defined:
$$\tau  \overline{(\alpha_1f_1\alpha_0^{-1},\alpha_2f_2\alpha_1^{-1},\alpha_3f_3\alpha_2^{-1})}  \ = \
(\overline{(\alpha_2f_2f_1\alpha_0^{-1},\alpha_3f_3\alpha_2^{-1})}, \overline{(\alpha_1f_1\alpha_0^{-1},\alpha_2f_2\alpha_1^{-1})} ) \ = $$
$$(\overline{(f_2f_1,f_3)}, \overline{(f_1,f_2)} ) \ = \ \tau \overline{(f_1,f_2,f_3)} .$$
To show that $\tau$ is surjective take a tuple $(\overline{(f_1,f_2)}, \overline{(f_3,f_4)} ) $ in $E$.  We may assume that $$f_2f_1 \ = \ f \ \ \ \ \ \ \mbox{and} \ \ \ \ \ \   f_4f_3\ =\ f_1.$$ So we have that
$$\tau\overline{(f_1,f_2,f_3)} \ = \ (\overline{(f_1,f_2)}, \overline{(f_3,f_4)} ).$$
To show injectivity we proceed as follows. Suppose we are given morphisms
$$x_0 \overset{f_1}{\longrightarrow} x_1 \overset{f_2}{\longrightarrow} x_2 \overset{f_{3}}{\longrightarrow} x_{3} \ \ \ \ \ \
\mbox{and}\ \ \ \ \ \ y_0 \overset{g_1}{\longrightarrow} y_1 \overset{g_2}{\longrightarrow} y_2 \overset{g_{3}}{\longrightarrow} y_{3},$$
such that
$$\tau\overline{(f_1,f_2,f_3)}  \ = \ \tau\overline{(g_1,g_2,g_3)}  \  \in  \overline{\mathrm{D}}_3\overline{f}. $$ Then
$$ (\overline{(f_2f_1,f_3)}, \overline{(f_1,f_2)} )\ = \ (\overline{(f_2f_1,f_3)}, \overline{(f_1,f_2)} ),$$ and thus  we have commutative diagrams with vertical isomorphisms:
 \[\xymatrix @R=.4in  @C=.8in
{x_0 \ar[r]^{f_1} \ar[d]_{\alpha_0} &  x_1 \ar[r]^{f_2}   & x_{2}  \ar[d]_{\alpha_2} \ar[r]^{f_2}   & x_{3}  \ar[d]_{\alpha_3} \\
y_0 \ar[r]^{g_1}  &  y_1 \ar[r]^{g_2}   & y_{2} \ar[r]^{g_2} & y_{3}  } \]
\[\xymatrix @R=.4in  @C=.8in
{x_0 \ar[r]^{f_1} \ar[d] &  x_1 \ar[r]^{f_2}\ar[d]   & x_{2}  \ar[d]    \\
y_0 \ar[r]^{g_1}  &  y_1 \ar[r]^{g_2}   & y_{2}   } \]
In particular  $x_1 \simeq y_1$ and thus we obtain the isomorphism filling $\alpha_1$ in the diagram
\[\xymatrix @R=.3in  @C=.4in
{  & x_1 \ar[dr]^{f_2}  \ar[dd]_{\alpha_1}& \\
 x_0 \ar[ur]^{f_1} \ar[dr]_{g_1\alpha_0}   &  & x_2\\
   &  y_1 \ar[ur]_{\alpha_2^{-1} g_2}   &      } \]
So we get the diagram
 \[\xymatrix @R=.4in  @C=.8in
{x_0 \ar[r]^{f_1} \ar[d]_{\alpha_0} &  x_1 \ar[r]^{f_2} \ar[d]_{\alpha_1}  & x_{2}  \ar[d]_{\alpha_2} \ar[r]^{f_2}   & x_{3}  \ar[d]_{\alpha_3} \\
y_0 \ar[r]^{g_1}  &  y_1 \ar[r]^{g_2}   & y_{2} \ar[r]^{g_2} & y_{3}  } \]
showing injectivity.

\end{proof}

More generally, we have the following result by induction.

\begin{cor}{\em
$$\alpha_1 \star \alpha_2 \star ....\star \alpha_n(\overline{f}) \ = \
\sum_{\overline{(f_1,...,f_n)} \in \overline{\mathrm{D}}_n \overline{f}} \alpha_1(\overline{f}_1)\alpha_2(\overline{f}_2) ....\alpha_n(\overline{f}_n).$$
}
\end{cor}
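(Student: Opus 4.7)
The plan is to proceed by induction on $n$. The base case $n=1$ is trivial and $n=2$ is the definition of $\star$. The case $n=3$ is precisely Theorem \ref{igu}, so I will assume $n\geq 3$ and derive the formula for $n+1$ from the formula for $n$.

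For the inductive step, I would first compute
\[
\alpha_1 \star \cdots \star \alpha_n \star \alpha_{n+1}(\overline{f})
\; = \; (\alpha_1 \star \cdots \star \alpha_n)\star \alpha_{n+1}(\overline{f})
\; = \; \sum_{\overline{(g_1,g_2)}\in \overline{\mathrm{D}}_2\overline{f}} (\alpha_1 \star \cdots \star \alpha_n)(\overline{g}_1)\,\alpha_{n+1}(\overline{g}_2),
\]
apply the inductive hypothesis inside, and then exhibit a bijection
\[
\tau : \overline{\mathrm{D}}_{n+1}\overline{f} \; \longrightarrow \; \bigl\{(\overline{(g_1,g_2)},\overline{(f_1,\ldots,f_n)}) \,:\, \overline{(g_1,g_2)}\in \overline{\mathrm{D}}_2\overline{f},\; \overline{(f_1,\ldots,f_n)}\in \overline{\mathrm{D}}_n\overline{g}_1\bigr\}
\]
given by $\overline{(f_1,\ldots,f_{n+1})} \longmapsto (\overline{(f_n\cdots f_1,\, f_{n+1})},\,\overline{(f_1,\ldots,f_n)})$. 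Summing $\alpha_1(\overline{f}_1)\cdots \alpha_{n+1}(\overline{f}_{n+1})$ along the fibres of $\tau$ then yields the desired formula, since the composite data on both sides refer to the same tuple of isomorphism classes.

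The routine part is well-definedness of $\tau$ on isomorphism classes and its surjectivity: if a representative $(g_1,g_2)\in \mathrm{D}_2f$ and a decomposition $(f_1,\ldots,f_n)$ of $g_1$ are given, then $(f_1,\ldots,f_n,g_2)$ is a representative $(n+1)$-decomposition of $f$, so $\tau$ hits this pair. Well-definedness is handled by propagating the isomorphisms from a $(n+1)$-chain through composition, exactly as in the three-term case in Theorem \ref{igu}.

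The main obstacle, as in the proof of Theorem \ref{igu}, is injectivity: given two representative $(n+1)$-decompositions whose images agree, one gets isomorphisms at the endpoints and at the distinguished intermediate node $x_n$ (coming from the $\overline{(f_n\cdots f_1,f_{n+1})}$ factor) and at every intermediate node of the sub-decomposition $\overline{(f_1,\ldots,f_n)}$ \emph{except} possibly at $x_n$ itself being compatible. One then invokes the isomorphism filling property to produce the single missing commuting isomorphism at each intermediate node where only two of the three sides of a triangle have been fixed, stitching the two commutative ladders together into one commutative diagram of height $n+1$ with vertical isomorphisms. The inductive construction of these filling isomorphisms, proceeding left-to-right through the chain, is where all the work lies; once they are built, the two $(n+1)$-decompositions are manifestly equivalent and $\tau$ is injective.
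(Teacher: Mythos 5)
Your proposal is correct and matches the paper's intent: the paper dispatches this corollary with the single remark that it follows ``by induction,'' implicitly reusing the bijection $\tau$ and the isomorphism-filling argument from the proof of Theorem \ref{igu}, which is exactly the induction you spell out. Your left-to-right construction of the intermediate filling isomorphisms (anchored at the endpoint isomorphisms supplied by the two components of $\tau$'s image) is the right way to carry out the injectivity step in general, so nothing is missing.
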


\begin{cor}{\em Let $C$ be an  isomorphism filling category with $\overline{\mathrm{D}}_2\overline{f}$ a finite set for $\overline{f} \in \overline{C}_1$. The free $R$-module $<\overline{C}_1>$ generated by $\overline{C}_1$ together with the $R$-linear maps
$$\Delta: <\overline{C}_1>\ \longrightarrow \ <\overline{C}_1> \otimes <\overline{C}_1> \ \ \ \ \ \textrm{and} \ \ \ \ \ \epsilon: <\overline{C}_1> \ \longrightarrow \ R$$ given on generators, respectively, by
$$\Delta \overline{f}\ = \ \sum_{(\overline{f}_1, \overline{f}_2) \in \overline{\mathrm{D}}_2\overline{f}}\overline{f}_1 \otimes \overline{f}_2 \ \ \ \ \ \mbox{and} \ \ \ \ \
\epsilon \overline{f} \ = \ \left\{\begin{array}{cc}
1 & \ \mathrm{if} \ f \ \mbox{is an isomorphism},  \\

0 & \  \mathrm{if} \ \mbox{otherwise}, \ \ \ \ \ \ \ \ \ \ \ \ \ \
\end{array}\right.$$
is a $R$-coalgebra.}
\end{cor}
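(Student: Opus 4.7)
The plan is to verify the two coalgebra axioms---coassociativity and the counit property---by directly reducing them to work already done in the proof of Theorem \ref{igu}. Indeed, the corollary is essentially dual to that theorem: the convolution product on $[\overline{C}_1,R]$ is defined by $\alpha\star\beta(\overline{f})=(\alpha\otimes\beta)\Delta\overline{f}$, with unit $\epsilon$, so the coalgebra axioms for $(\Delta,\epsilon)$ correspond exactly to the associativity and unit axioms for $\star$. Since $<\overline{C}_1>$ is a free $R$-module and the finiteness of $\overline{\mathrm{D}}_2\overline{f}$ guarantees that $\Delta\overline{f}$ is a finite tensor, all sums in sight genuinely live in $<\overline{C}_1>^{\otimes n}$ rather than in a completion, so the duality between algebra and coalgebra axioms is clean.

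For the counit property I would compute
$$(\epsilon\otimes 1)\Delta\overline{f}\;=\;\sum_{\overline{(f_1,f_2)}\in\overline{\mathrm{D}}_2\overline{f}}\epsilon(\overline{f}_1)\,\overline{f}_2,$$
noting that only classes with $f_1$ an isomorphism contribute. Given such a 2-decomposition, one has $f_2f_1=\beta^{-1}f\alpha$ for isomorphisms $\alpha,\beta$, and the triple $(1_x,f_1,\beta)$ (with $f_1\alpha^{-1}$ in the middle) is a morphism in $\mathrm{D}_2\overline{f}$ from $(1_x,f)$ to $(f_1,f_2)$. Hence every such decomposition is equivalent to $(1_x,f)$, contributing exactly $\overline{f}$. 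The identity $(1\otimes\epsilon)\Delta\overline{f}=\overline{f}$ is argued symmetrically, via the class $(f,1_y)$.

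For coassociativity I would expand both $(\Delta\otimes 1)\Delta\overline{f}$ and $(1\otimes\Delta)\Delta\overline{f}$ as iterated sums over 2-decompositions. The bijection $\tau\colon\overline{\mathrm{D}}_3\overline{f}\to E$ explicitly constructed in the proof of Theorem \ref{igu} reindexes the first double sum, and its evident mirror (obtained by the map $\overline{(f_1,f_2,f_3)}\mapsto(\overline{(f_1,f_3f_2)},\overline{(f_2,f_3)})$, whose well-definedness, surjectivity, and injectivity are established by repeating the isomorphism-filling argument of that proof) reindexes the second. Both double sums collapse to
$$\sum_{\overline{(f_1,f_2,f_3)}\in\overline{\mathrm{D}}_3\overline{f}}\overline{f}_1\otimes\overline{f}_2\otimes\overline{f}_3,$$
giving the desired equality.

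The delicate point, and the only place where this goes beyond a mechanical dualization of Theorem \ref{igu}, is the counit check when $f$ is itself an isomorphism: here the classes $(1_x,f)$ and $(f,1_y)$ may coincide in $\overline{\mathrm{D}}_2\overline{f}$, and one must be sure that no other decompositions contribute. This is precisely where the isomorphism filling hypothesis is used to verify that any 2-decomposition of an isomorphism factors through a pair of isomorphisms whose class is the single one just described. Once this case is handled, both coalgebra axioms fall out.
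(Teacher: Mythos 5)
Your proposal is correct and takes essentially the same route as the paper, which states this corollary without proof precisely because it is the dualization of Theorem \ref{igu}: coassociativity of $\Delta$ is the bijection $\tau\colon \overline{\mathrm{D}}_3\overline{f}\to E$ (and its mirror) read through the pairing, and the counit property is the unit property of $\star$. One minor remark: your ``delicate point'' is not actually delicate --- for any $f$ (isomorphism or not), every class in $\overline{\mathrm{D}}_2\overline{f}$ whose first component is an isomorphism equals $\overline{(1_x,f)}$ by the conjugation argument you give, and all other classes are annihilated by $\epsilon$, so the isomorphism filling hypothesis is only genuinely needed for coassociativity.
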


\begin{thm}\label{yc}{\em Let $C$ be an essentially finite decomposition isomorphism filling category. A map $\alpha \in [\overline{C}_1,R]$ is a $\star$-unit if and only if $\alpha(\overline{1}_x)$ is
a unit in $R$ for all $x \in C.$ Thus the map $\xi \in [\overline{C}_1, R]$  constantly equal to $1$
is a unit in $([\overline{C}_1, R], \star)$; its inverse $\mu$ is called the  M$\ddot{\mbox{o}}$bius function of $C$ and is such that
$\mu\overline{f}=1$ if $f$ is an isomorphism, and if $f$ is not an isomorphism then $\mu\overline{f}$ is given by
$$\mu\overline{f} \ = \ \sum_{n \geq 1 }(-1)^n \big| \overline{\mathrm{PD}}_n\overline{f} \big|.$$
}
\end{thm}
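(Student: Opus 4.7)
The plan is to mirror the proof of Theorem \ref{mier}, transposed to the setting of isomorphism classes. The algebra $([\overline{C}_1, R], \star)$ is associative (Theorem \ref{igu}) precisely because $C$ is isomorphism filling, and this is what lets a one-sided inverse constructed by recursion automatically be a two-sided inverse, so I would not try to build the inverse symmetrically.

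For necessity, if $\alpha \star \beta = 1$, then I would evaluate at $\overline{1}_x$. The groupoid $\mathrm{D}_2 \overline{1}_x$ collapses modulo isomorphism to the single class $\overline{(1_x,1_x)}$: any factorisation $f_2 f_1 = \beta^{-1} 1_x \alpha$ forces $f_1$ to be a split mono and $f_2$ a split epi whose composition is iso, and under the equivalence of Definition \ref{bbbb} the pair is isomorphic to $(1_x, 1_x)$. This yields $\alpha(\overline{1}_x)\beta(\overline{1}_x) = 1$, so $\alpha(\overline{1}_x) \in R^{\times}$.

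For sufficiency, I would introduce a height function $h : \overline{C}_1 \longrightarrow \mathbb{N}$ defined by $h(\overline{f}) = \max\{n \geq 1 : \overline{\mathrm{PD}}_n \overline{f} \neq \emptyset\}$, which is finite since $\overline{\mathrm{PD}}\,\overline{f}$ is a finite disjoint union. Setting $\beta(\overline{u}) = \alpha(\overline{1}_{tu})^{-1}$ for every isomorphism $u$, I would then define $\beta(\overline{f})$ for non-isomorphisms by solving $\alpha \star \beta(\overline{f}) = 0$ for $\beta(\overline{f})$, isolating the contribution of the trivial class $\overline{(1_{sf}, f)}$ and expressing $\beta(\overline{f})$ in terms of $\beta(\overline{f}_2)$ with $h(\overline{f}_2) < h(\overline{f})$. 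Associativity (Theorem \ref{igu}) then gives a two-sided inverse, establishing the first half of the statement and in particular the invertibility of $\xi$.

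Finally, for the explicit formula, I would specialise to $\alpha = \xi$ so $\beta = \mu$, $\mu(\overline{u}) = 1$ for every isomorphism $u$, and iteratively unroll the recursion
\[
\mu(\overline{f}) \ = \ -\sum_{\overline{(f_1,f_2)} \in \overline{\mathrm{D}}_2\overline{f} \setminus \{\overline{(1_{sf},f)}\}} \mu(\overline{f}_2).
\]
At each expansion stage one picks up a factor of $-1$ and one extra component, and telescoping the iteration produces a sum indexed by chains of 2-decompositions; regrouping these chains as single $n$-tuples recovers the groupoid of proper $n$-decompositions. The main obstacle will be the combinatorial bookkeeping in this last step: I must check that after iteration all non-proper contributions cancel across isomorphism classes, and that each element of $\overline{\mathrm{PD}}_n\overline{f}$ is counted exactly once with sign $(-1)^n$. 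The isomorphism-filling property will be essential here to identify iterated 2-decompositions with honest $n$-decompositions at the level of isomorphism classes, paralleling the argument in Theorem \ref{mier} but carried out modulo the equivalence in Definition \ref{cccc}.
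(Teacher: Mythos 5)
Your architecture is essentially the paper's: necessity by evaluating $\alpha\star\beta$ at $\overline{1}_x$, sufficiency by a terminating recursion over $\overline{\mathrm{D}}_2\overline{f}$ whose unrolled form is the alternating sum over $\overline{\mathrm{PD}}_n\overline{f}$. The paper's own proof is just the two-line version of this (it writes down the closed formula for $\alpha^{-1}$ without the unrolling), so your height function and the explicit appeal to associativity from Theorem \ref{igu} to turn a one-sided recursive inverse into a two-sided one are welcome additions rather than deviations.

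The one step whose justification does not hold as written is the collapse of $\overline{\mathrm{D}}_2\overline{1}_x$ to the single class $\overline{(1_x,1_x)}$. From $f_2f_1=\beta^{-1}\alpha$ being an isomorphism you correctly conclude that $f_1$ is split mono and $f_2$ split epi, but in a general category this does not force $f_1$ and $f_2$ to be isomorphisms (compare $\{*\}\hookrightarrow\{1,2\}\twoheadrightarrow\{*\}$ in sets), and only when both are isomorphisms is the pair isomorphic, via the ladders of Definition \ref{bbbb}, to $(1_x,1_x)$. The missing ingredient is the essentially-finite-decomposition hypothesis itself: if $f_1$ were not invertible, then with $u=f_2f_1$ the endomorphism $e=f_1u^{-1}f_2$ is a non-invertible idempotent, and the factorisations $u=f_2e^kf_1$ produce proper decompositions of $\overline{1}_x$ of every length, contradicting finiteness of $\overline{\mathrm{PD}}\,\overline{1}_x$. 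You need the same observation again in the recursion, to know that every class of $\overline{\mathrm{D}}_2\overline{f}$ other than $\overline{(1_{sf},f)}$ and $\overline{(f,1_{tf})}$ has both components non-invertible, so that the surviving right-hand factors $\overline{f}_2$ genuinely have smaller height. With that repaired, the rest of your plan (including the isomorphism-filling bookkeeping to identify iterated $2$-decompositions with classes of proper $n$-decompositions) goes through and matches the paper.
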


\begin{proof} Assume $\alpha \star \beta =1.$ Then for all $x \in C$ we have that
$$\alpha(\overline{1}_x)\beta(\overline{1}_x)\ = \ (\alpha \star \beta)(\overline{1}_x)\ = \ 1, \ \ \ \
\mbox{thus} \ \ \ \ \alpha(\overline{1}_x)\ \ \ \mbox{is a unit}.$$
Conversely, if $\alpha(\overline{1}_x)$ is a unit  for all $x\in C$, then $\alpha$ is invertible and its inverse $\alpha^{-1}$ is given by
$$\alpha^{-1}(\overline{1}_x) = \frac{1}{\alpha(\overline{1}_x)} ,$$ and if $f$ a non-isomorphism then we have
that
$$\alpha^{-1}(\overline{f}) \ = \ \sum_{n\geq 1}\ \ \sum_{ \overline{(f_1,....,f_n)} \in \overline{\mathrm{PD}}_n\overline{f}}(-1)^n
\frac{\alpha(\overline{f}_1).......\alpha(\overline{f}_n)}{\alpha(\overline{1}_{sf_1})\alpha(\overline{1}_{tf_1})....\alpha(\overline{1}_{tf_n})} .$$
\end{proof}

\begin{cor}{\em Let $C$ be an essentially finite decomposition isomorphism filling category and let $R[[x_{\overline{f}}]]$ be the $R$-algebra of formal power series in the variables $x_{\overline{f}}$ for $\overline{f} \in \overline{C}_1$, with $f$ a non-isomorphism in $C_1$. The structural maps on $R[[x_{\overline{f}}]]$ given, respectively, on generators by
$$\epsilon 1=1 \ \ \ \ \mbox{and}\ \ \ \ \epsilon x_{\overline{f}}= 0,$$
$$\Delta 1=1\otimes 1 \ \ \ \ \mbox{and} \ \ \ \
\Delta x_{f} \ = \ 1\otimes x_{\overline{f}} \ + \ \sum_{\overline{(f_1, f_2)} \in \overline{\mathrm{PD}}_2\overline{f}}x_{\overline{f}_1} \otimes x_{\overline{f}_2} \ + \
x_{\overline{f}}\otimes 1,$$
$$S 1 =1 \ \ \ \ \mbox{and}\ \ \ \ S x_{\overline{f}}\ = \ \sum_{n\geq 1}\ \sum_{\overline{(f_1,....,f_n)} \in \overline{\mathrm{PD}}_nf}(-1)^nx_{\overline{f}_1}....x_{\overline{f}_n}$$ turn $R[[x_{\overline{f}}]]$ into a Hopf algebra,  and the M$\ddot{\mbox{o}}$bius function $\mu \in [C_1,R] $ of $ C$ is given by
$$\mu \overline{f} \ = \ Sx_{\overline{f}}(1).$$.}
\end{cor}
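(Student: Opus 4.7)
The plan is to verify the Hopf algebra axioms on the generators $x_{\overline{f}}$ and extend the structure maps multiplicatively to the completed power series ring, then read off the M\"obius identity by comparison with Theorem \ref{yc}. I will mirror the strategy already used for Theorem \ref{t1} and its analogue for M\"obius categories, with the added twist that the isomorphism filling hypothesis enters essentially to secure coassociativity.

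\textbf{Coalgebra structure.} The counit axiom $(\epsilon \otimes \mathrm{id})\Delta x_{\overline{f}} = x_{\overline{f}} = (\mathrm{id} \otimes \epsilon)\Delta x_{\overline{f}}$ follows at once, since $\epsilon$ annihilates every $x_{\overline{f}_i}$ in a proper 2-decomposition, leaving only the boundary terms $1 \otimes x_{\overline{f}}$ and $x_{\overline{f}} \otimes 1$. For coassociativity on $x_{\overline{f}}$, using the convention $x_{\overline{1}} := 1$ so that $\Delta x_{\overline{f}}$ reads as a single sum over $\overline{\mathrm{D}}_2 \overline{f}$, both $(\Delta \otimes \mathrm{id})\Delta x_{\overline{f}}$ and $(\mathrm{id} \otimes \Delta)\Delta x_{\overline{f}}$ become sums over pairs of 2-decompositions. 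Their equality is precisely the bijection $\tau: \overline{\mathrm{D}}_3\overline{f} \to E$ constructed in the proof of Theorem \ref{igu}; this is exactly where isomorphism filling is used.

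\textbf{Bialgebra and antipode.} By construction $\Delta$ and $\epsilon$ are algebra maps since they are defined on generators and extended multiplicatively. For the antipode I verify $m \circ (S \otimes \mathrm{id}) \circ \Delta = \eta \circ \epsilon$ on generators (the other side is symmetric). Unfolding gives the required identity
$$x_{\overline{f}} \;+\; \sum_{\overline{(f_1,f_2)} \in \overline{\mathrm{PD}}_2\overline{f}} S(x_{\overline{f}_1})\, x_{\overline{f}_2} \;+\; S x_{\overline{f}} \;=\; 0.$$
Substituting the explicit formula for $S$, each proper $n$-decomposition $(f_1, \ldots, f_n)$ with $n \geq 2$ is reindexed by the proper 2-decomposition $(f_{n-1}\cdots f_1,\, f_n) \in \overline{\mathrm{PD}}_2 \overline{f}$ paired with the proper $(n-1)$-decomposition $(f_1, \ldots, f_{n-1})$ of its first factor. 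The resulting sum telescopes and cancels $x_{\overline{f}}$, giving $0$; the reindexing is well-defined modulo isomorphism because of the same filling argument used in Theorem \ref{igu}.

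\textbf{M\"obius identity.} Comparing the closed form
$$S x_{\overline{f}} \;=\; \sum_{n \geq 1}\, \sum_{\overline{(f_1,\ldots,f_n)} \in \overline{\mathrm{PD}}_n\overline{f}} (-1)^n\, x_{\overline{f}_1} \cdots x_{\overline{f}_n}$$
with Theorem \ref{yc}, setting every $x_{\overline{f}_i} = 1$ collapses the inner sum to $|\overline{\mathrm{PD}}_n \overline{f}|$, so $S x_{\overline{f}}(1) = \sum_{n \geq 1}(-1)^n |\overline{\mathrm{PD}}_n \overline{f}| = \mu \overline{f}$.

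The main obstacle is coassociativity, since without isomorphism filling the two iterated coproducts need not agree at the level of isomorphism classes of decompositions. All the real work, however, has already been absorbed into the bijection $\tau$ of Theorem \ref{igu}, so the verification here is essentially formal; the antipode step is a standard telescoping calculation once that bijection is in hand.
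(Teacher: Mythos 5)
Your proof is correct and is exactly the argument the paper intends: the paper states this corollary with no proof at all, leaving the reader to transplant the pattern of Theorem \ref{t1} and the corollary to Theorem \ref{mier}, and your verification does precisely that, correctly locating the only genuinely new point (coassociativity at the level of isomorphism classes of decompositions) in the bijection $\tau$ of Theorem \ref{igu}, where the isomorphism filling hypothesis is consumed. The counit check, the telescoping cancellation for the antipode (which implicitly uses that a composite of non-isomorphisms is a non-isomorphism, guaranteed since essentially finite decomposition categories are isocyclic), and the evaluation $Sx_{\overline{f}}(1)=\sum_{n\geq 1}(-1)^n\big|\overline{\mathrm{PD}}_n\overline{f}\big|=\mu\overline{f}$ via Theorem \ref{yc} are all carried out correctly.
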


\begin{prop}{\em Let $C$ and $D$ be finite essentially finite decomposition isomorphism filling categories, then
$C\times D$ is a finite an essentially finite decomposition isomorphism filling category, $$\overline{(C\times D)}_1 \simeq \overline{C}_1 \times \overline{D}_1,$$ and we have a natural isomorphism of algebras
$$([(\overline{C\times D})_1, R], \star)  \ \simeq \ ([\overline{C}_1, R], \star) \otimes
 ([\overline{D}_1, R], \star).$$
Moreover, we have that $$\xi_{C \times D}\overline{(f,g)} \ = \ \xi_C \overline{f}  \xi_D \overline{g}$$ and thus
$$\mu_{C \times D}\overline{(f,g)}\ = \ \mu_C \overline{f}  \mu_D \overline{g}.$$
}
\end{prop}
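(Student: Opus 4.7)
The plan is to establish all the claims by unwinding what a morphism, an isomorphism class of morphisms, and a decomposition look like in the product category $C\times D$, and then feeding this into the abstract algebra machinery already set up. Once the algebra isomorphism is in place, the formula for the Möbius function follows at once by uniqueness of the convolution inverse.

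First I would check structurally that $C\times D$ is an essentially finite decomposition, isomorphism filling category. A morphism $(f,g):(a_1,b_1)\to(a_2,b_2)$ is an isomorphism iff $f$ and $g$ are, so two morphisms $(f_1,g_1)$ and $(f_2,g_2)$ are isomorphic in $C\times D$ iff $f_1\simeq f_2$ in $C$ and $g_1\simeq g_2$ in $D$; this gives the bijection $\overline{(C\times D)}_1 \simeq \overline{C}_1\times \overline{D}_1$. Composition in $C\times D$ is componentwise, so an $n$-decomposition of $(f,g)$ is the same data as a pair consisting of an $n$-decomposition of $f$ and an $n$-decomposition of $g$; this identification also respects the notion of isomorphism of decompositions (Definition \ref{bbbb}), giving $\overline{\mathrm{D}}_n\overline{(f,g)} \simeq \overline{\mathrm{D}}_n\overline{f} \times \overline{\mathrm{D}}_n\overline{g}$. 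In particular $\overline{\mathrm{D}}_2$ and $\overline{\mathrm{PD}}$ are finite on $\overline{(C\times D)}_1$, and the isomorphism filling property for $C\times D$ follows by applying the isomorphism filling property componentwise in $C$ and in $D$.

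Next I would produce the algebra isomorphism
$$\Phi: ([\overline{C}_1,R],\star)\otimes ([\overline{D}_1,R],\star) \longrightarrow ([(\overline{C\times D})_1,R],\star)$$
sending $\alpha\otimes\beta$ to the map $\overline{(f,g)}\mapsto \alpha(\overline{f})\beta(\overline{g})$. That $\Phi$ is an $R$-module isomorphism is immediate from the bijection $\overline{(C\times D)}_1 \simeq \overline{C}_1\times \overline{D}_1$ (in the finite case; for the infinite case one would pass to a completed tensor product). Multiplicativity is the content of the computation
\begin{align*}
\Phi(\alpha_1\otimes\beta_1)\star \Phi(\alpha_2\otimes\beta_2)\bigl(\overline{(f,g)}\bigr)
&=\sum_{\overline{((f_1,g_1),(f_2,g_2))}\in \overline{\mathrm{D}}_2\overline{(f,g)}}\!\!\alpha_1(\overline{f}_1)\beta_1(\overline{g}_1)\alpha_2(\overline{f}_2)\beta_2(\overline{g}_2)\\
&=\Bigl(\sum_{\overline{(f_1,f_2)}}\alpha_1(\overline{f}_1)\alpha_2(\overline{f}_2)\Bigr)\Bigl(\sum_{\overline{(g_1,g_2)}}\beta_1(\overline{g}_1)\beta_2(\overline{g}_2)\Bigr),
\end{align*}
which uses precisely the product description $\overline{\mathrm{D}}_2\overline{(f,g)}\simeq \overline{\mathrm{D}}_2\overline{f}\times \overline{\mathrm{D}}_2\overline{g}$ established above. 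The unit $1\in[\overline{(C\times D)}_1,R]$ is the characteristic function of isomorphism classes of identities $\overline{1}_{(a,b)}$, and $\Phi(1\otimes 1)$ evaluates to $1$ on such classes and $0$ elsewhere, so units correspond as well.

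Finally, I would read off the two remaining formulas from $\Phi$. By construction $\xi_{C\times D}\overline{(f,g)} = |(C\times D)((a_1,b_1),(a_2,b_2))| = |C(a_1,a_2)|\,|D(b_1,b_2)| = \xi_C\overline{f}\,\xi_D\overline{g}$, i.e.\ $\xi_{C\times D} = \Phi(\xi_C\otimes \xi_D)$. By Theorem \ref{yc} the Möbius functions $\mu_C,\mu_D,\mu_{C\times D}$ are the unique $\star$-inverses of $\xi_C,\xi_D,\xi_{C\times D}$ respectively; since $\Phi$ is an algebra isomorphism, $\Phi(\mu_C\otimes\mu_D)$ is the $\star$-inverse of $\Phi(\xi_C\otimes\xi_D)=\xi_{C\times D}$, hence equals $\mu_{C\times D}$, which gives $\mu_{C\times D}\overline{(f,g)} = \mu_C\overline{f}\,\mu_D\overline{g}$.

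The main obstacle is the bookkeeping around isomorphism classes of decompositions: one must check that the isomorphism $\overline{\mathrm{D}}_2\overline{(f,g)}\simeq \overline{\mathrm{D}}_2\overline{f}\times \overline{\mathrm{D}}_2\overline{g}$ really is well-defined on \emph{isomorphism classes} of decompositions and not merely on decompositions. This uses that an isomorphism $(\alpha,\beta)$ in $C\times D$ between two two-step factorizations decomposes as a pair of isomorphisms $\alpha$ in $C$ and $\beta$ in $D$ between the corresponding factorizations, which is what makes the componentwise description work cleanly.
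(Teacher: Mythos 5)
The paper states this proposition without any proof at all, so there is nothing to compare against on the paper's side; your argument is the natural one and is essentially correct, with the algebra isomorphism coming from $\overline{(C\times D)}_1 \simeq \overline{C}_1\times\overline{D}_1$ and $\overline{\mathrm{D}}_2\overline{(f,g)}\simeq\overline{\mathrm{D}}_2\overline{f}\times\overline{\mathrm{D}}_2\overline{g}$, and the M\"obius formula then falling out by uniqueness of the $\star$-inverse. Two points deserve more care. First, properness of decompositions is \emph{not} componentwise: $(f_i,g_i)$ fails to be an isomorphism as soon as one of $f_i,g_i$ does, so a proper decomposition of $\overline{(f,g)}$ need not project to proper decompositions of $\overline{f}$ and $\overline{g}$, and $\overline{\mathrm{PD}}_n\overline{(f,g)}$ is not $\overline{\mathrm{PD}}_n\overline{f}\times\overline{\mathrm{PD}}_n\overline{g}$. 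Your phrase ``in particular $\overline{\mathrm{D}}_2$ and $\overline{\mathrm{PD}}$ are finite'' therefore hides the one nontrivial verification: each $\overline{\mathrm{PD}}_n\overline{(f,g)}\subseteq\overline{\mathrm{D}}_n\overline{(f,g)}$ is finite by your product description, but to get finiteness of the full union $\overline{\mathrm{PD}}\overline{(f,g)}$ you must bound the length $n$ of proper decompositions. This is true --- letting $A$ (resp.\ $B$) be the set of indices $i$ with $f_i$ (resp.\ $g_i$) not an isomorphism, one has $A\cup B=[n]$, and collapsing the isomorphisms as in the paper's counting lemma shows $|A|$ and $|B|$ are bounded by the maximal lengths of proper decompositions of $\overline{f}$ and $\overline{g}$, whence $n\le |A|+|B|$ is bounded --- but it is not automatic and should be said. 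Happily this subtlety does not propagate: the convolution product is defined via $\overline{\mathrm{D}}_2$, which does factor, and you obtain $\mu_{C\times D}=\Phi(\mu_C\otimes\mu_D)$ by uniqueness of inverses rather than from the explicit $\overline{\mathrm{PD}}_n$ sum, so the rest of the argument is untouched. Second, a small misidentification: in this section $\xi\in[\overline{C}_1,R]$ is the function constantly equal to $1$ (Theorem \ref{yc}), not the adjacency map $|C(x,y)|$ of the earlier sections, so the identity $\xi_{C\times D}=\Phi(\xi_C\otimes\xi_D)$ is immediate and your cardinality computation, while true, is justifying a different statement.
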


Let $f:x\longrightarrow y$ be  a non-isomorphism in $C_1$.  We construct the augmented simplicial essentially finite groupoid $D_{\ast}\overline{f}$ which is
a full sub-simplicial groupoid of $C_{\ast}(\overline{x}, \overline{y})$. For $n \geq -1$, we have inclusions
$$D_{n}\overline{f}\ \subseteq \ C_{n}(\overline{x}, \overline{y})\ = \ [[0,n+2],C]_{\overline{x},\overline{y}}.$$

A functor $F\in C_{n}(\overline{x},\overline{y})$ belongs to $D_{n}\overline{f}\ $ if and only if
$$ \overline{F}(0 \leq n+2) \ = \ \overline{f}.$$ By Definitions \ref{bbbb} and \ref{cccc} we have for $n\geq -1$ that:
$$D_{n}\overline{f}\ = \ \mathrm{D}_{n+2}\overline{f} \ \ \ \ \ \mbox{and}\ \ \ \ \ D_{n}\overline{f}^{\natural}\ = \ \mathrm{PD}_{n+2}\overline{f} .$$
Taking isomorphisms classes of objects we get the simplicial complex $\overline{D}_{\ast}\overline{f}$ such
that $$\overline{D}_{n}\overline{f}\ = \ \overline{\mathrm{D}}_{n+2}\overline{f} \ \ \ \ \ \mbox{and} \ \ \ \ \ \overline{D}_{n}\overline{f}^{\natural} \ = \ \overline{\mathrm{PD}}_{n+2}\overline{f}.$$

\begin{thm}{\em The  M$\ddot{\mbox{o}}$bius function $\mu \in [\overline{C}_1,R]$ of a finite decomposition isomorphism filling category $C$ is given for $f$ a non-isomorphism
by $$\mu \overline{f} \ = \ \widetilde{\chi}\overline{D}_{\ast}\overline{f}. $$
}
\end{thm}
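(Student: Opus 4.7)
The plan is to combine Theorem \ref{yc}, which gives the explicit formula
$$\mu\overline{f}\ =\ \sum_{n \geq 1}(-1)^n\bigl|\overline{\mathrm{PD}}_n\overline{f}\bigr|,$$
with the definitional identification $\overline{D}_n\overline{f}^{\natural}=\overline{\mathrm{PD}}_{n+2}\overline{f}$ recorded immediately before the statement. The argument is then just a re-indexing of the alternating sum, completely parallel to the proof of Theorem \ref{t77} for isocyclic essentially locally finite categories.

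First I would unpack the definition of $\widetilde{\chi}\overline{D}_{\ast}\overline{f}$. The simplicial complex $\overline{D}_{\ast}\overline{f}$ is obtained from the augmented simplicial essentially finite groupoid $D_{\ast}\overline{f}$ by passing to isomorphism classes; its set of non-degenerate $n$-simplices is $\overline{D}_n\overline{f}^{\natural}$ for $n\geq -1$. By definition,
$$\widetilde{\chi}\overline{D}_{\ast}\overline{f}\ =\ \sum_{n\geq -1}(-1)^n\bigl|\overline{D}_n\overline{f}^{\natural}\bigr|.$$

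Next I would substitute $\overline{D}_n\overline{f}^{\natural}=\overline{\mathrm{PD}}_{n+2}\overline{f}$ and re-index with $k=n+2$:
$$\widetilde{\chi}\overline{D}_{\ast}\overline{f}\ =\ \sum_{n\geq -1}(-1)^n\bigl|\overline{\mathrm{PD}}_{n+2}\overline{f}\bigr|\ =\ \sum_{k\geq 1}(-1)^{k-2}\bigl|\overline{\mathrm{PD}}_{k}\overline{f}\bigr|\ =\ \sum_{k\geq 1}(-1)^k\bigl|\overline{\mathrm{PD}}_{k}\overline{f}\bigr|.$$
Comparing with Theorem \ref{yc} yields $\mu\overline{f}=\widetilde{\chi}\overline{D}_{\ast}\overline{f}$, as desired.

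There is no real obstacle here beyond verifying that the bookkeeping matches: the definition \ref{cccc} makes $\overline{\mathrm{PD}}_1\overline{f}=\overline{\mathrm{D}}_1\overline{f}$ a singleton consisting of $\overline{f}$ itself (not a proper decomposition in the strict sense, but treated as such by convention), which corresponds to the dimension $-1$ term $\overline{D}_{-1}\overline{f}^{\natural}$ in the augmented complex, and one must check this convention is consistent with the formula of Theorem \ref{yc}. Once that edge case is verified, the identity is immediate from re-indexing. The mild subtlety worth mentioning in a remark is that the finiteness hypothesis (essentially finite decomposition category) guarantees that the sum is finite, so the Euler characteristic is well-defined as a finite alternating sum of cardinalities.
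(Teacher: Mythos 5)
Your proposal is correct and follows essentially the same route as the paper: both start from the formula $\mu\overline{f}=\sum_{n\geq 1}(-1)^n\big|\overline{\mathrm{PD}}_n\overline{f}\big|$ of Theorem \ref{yc}, invoke the identification $\overline{D}_{n}\overline{f}^{\natural}=\overline{\mathrm{PD}}_{n+2}\overline{f}$, and re-index the alternating sum. Your remark on the $n=1$ (equivalently dimension $-1$) edge case is a welcome extra check that the paper leaves implicit.
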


\begin{proof}From Theorem \ref{yc} we have that
$$\mu \overline{f} \ = \ \sum_{n \geq 1 }(-1)^n \big| \overline{\mathrm{PD}}_n\overline{f} \big| \ = \
\sum_{n \geq -1 }(-1)^n \big| \overline{\mathrm{PD}}_{n+2}\overline{f} \big| \ = \
\sum_{n \geq -1 }(-1)^n \big| \overline{D}_{n}\overline{f}^{\natural} \big|\ = \ \widetilde{\chi}\overline{D}_{\ast}\overline{f}.$$
\end{proof}

Note that for $\overline{x} < \overline{y} \in \overline{C}$ we have the following identity of simplicial groupoids
$$ C_{\ast}(\overline{x},\overline{y}) \ = \ \bigsqcup_{\overline{f}\in \overline{C(\overline{x},\overline{y})}}D_{\ast}\overline{f},
 \ \ \ \ \ \ \ \mbox{so we have that} \ \ \ $$
$$\widetilde{\chi}_{\mathrm{g}} C_{\ast}(\overline{x},\overline{y})\ = \ \sum_{\overline{f}\in \overline{C(\overline{x},\overline{y})}}\widetilde{\chi}_{\mathrm{g}}D_{\ast}\overline{f}. \ \ \ \
\ \ \ \ \ \ \ \ \ \ \ \ \ \ \ \ \ \ \ \ $$
From Theorem \ref{t77} we know that the M$\ddot{\mbox{o}}$bius function $\mu \in [\overline{C}_1,R]$ of $C$ is given by
$$\mu[\overline{x},\overline{y}] \ = \ \widetilde{\chi}_{\mathrm{g}} C_{\ast}(\overline{x},\overline{y}).$$
Thus we have shown the following result.

\begin{thm}\label{p}{\em Let $C$ be an essentially finite decomposition isomorphism filling category. Then $C$ is an essentially locally finite category
with  M$\ddot{\mbox{o}}$bius function $\mu(\overline{x},\overline{y})$ given for
$\overline{x} < \overline{y}\ $ in $\ \overline{C}\ $ by
$$\mu[\overline{x},\overline{y}] \  = \ \sum_{\overline{f}\in \overline{C(\overline{x},\overline{y})}}\widetilde{\chi}_{\mathrm{g}}D_{\ast}\overline{f},
 \ \ \ \ \ \ \ \mbox{where}$$
$$\widetilde{\chi}_{\mathrm{g}}D_{\ast}\overline{f} \ = \ \sum_{n \geq -1 }(-1)^n \big| D_{n}\overline{f}^{\natural} \big|_{\mathrm{g}} \ = \
\sum_{n \geq 1 }(-1)^n \big| \mathrm{PD}_n\overline{f} \big|_{\mathrm{g}}.
$$
}
\end{thm}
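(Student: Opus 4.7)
The plan is to combine Theorem~\ref{t77} with the disjoint union decomposition of $C_{\ast}(\overline{x},\overline{y})$ that is recorded immediately before the statement. First I would invoke the earlier theorem that characterizes essentially finite decomposition categories with finite hom-sets as precisely the isocyclic essentially locally finite categories; this both verifies the first claim of the theorem and makes Theorem~\ref{t77} directly applicable, giving
$$\mu[\overline{x},\overline{y}] \ = \ \widetilde{\chi}_{\mathrm{g}}\, C_{\ast}(\overline{x},\overline{y}).$$

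Next I would justify the displayed decomposition
$$C_{\ast}(\overline{x},\overline{y}) \ = \ \bigsqcup_{\overline{f} \in \overline{C(x,y)}} D_{\ast}\overline{f}.$$
An object of $C_n(\overline{x},\overline{y}) = [[0,n+2],C]_{\overline{x},\overline{y}}$ is a chain of morphisms whose total composite $F(0\le n{+}2)$ determines a well-defined isomorphism class in $\overline{C}_1$ (isomorphic functors produce isomorphic composites), and by definition $F$ lies in $D_n\overline{f}$ when this composite represents $\overline{f}$. Since the simplicial operators respect composition, this is actually a decomposition of augmented simplicial groupoids. The cardinality $|\,\cdot\,|_{\mathrm{g}}$ is additive under disjoint unions of essentially finite groupoids, so term-by-term additivity in the alternating sum defining $\widetilde{\chi}_{\mathrm{g}}$ yields
$$\widetilde{\chi}_{\mathrm{g}}\, C_{\ast}(\overline{x},\overline{y}) \ = \ \sum_{\overline{f}\in \overline{C(x,y)}}\widetilde{\chi}_{\mathrm{g}}\, D_{\ast}\overline{f}.$$

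Finally, I would compute $\widetilde{\chi}_{\mathrm{g}} D_{\ast}\overline{f}$ using the identification $D_n\overline{f}^{\natural} = \mathrm{PD}_{n+2}\overline{f}$ established just before the theorem statement. A shift of summation index then gives
$$\widetilde{\chi}_{\mathrm{g}}\, D_{\ast}\overline{f} \ = \ \sum_{n\ge -1}(-1)^n \big| D_n\overline{f}^{\natural} \big|_{\mathrm{g}} \ = \ \sum_{n\ge -1}(-1)^n \big| \mathrm{PD}_{n+2}\overline{f} \big|_{\mathrm{g}} \ = \ \sum_{n\ge 1}(-1)^n \big| \mathrm{PD}_n\overline{f} \big|_{\mathrm{g}},$$
which is the second asserted identity. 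Chaining these three steps produces the formula for $\mu[\overline{x},\overline{y}]$.

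There is no substantive obstacle: the theorem is essentially a synthesis of Theorem~\ref{t77} (which supplies the groupoid-Euler-characteristic formula for $\mu$ in any isocyclic essentially locally finite category) with the combinatorial bookkeeping already carried out in the paragraphs preceding the statement. The only point deserving a line of care is checking that the assignment $F \mapsto \overline{F(0\le n{+}2)}$ is invariant under the natural isomorphisms that constitute the morphisms of $C_n(\overline{x},\overline{y})$, so that the proposed disjoint union is genuinely a disjoint union of \emph{subgroupoids}; this follows because the isomorphisms filling the two ends of a morphism in $C_n(\overline{x},\overline{y})$ witness that the two composites represent the same class in $\overline{C}_1$.
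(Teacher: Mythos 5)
Your proposal is correct and follows essentially the same route as the paper: the paper also obtains the result by combining Theorem~\ref{t77} with the disjoint union decomposition $C_{\ast}(\overline{x},\overline{y})=\bigsqcup_{\overline{f}}D_{\ast}\overline{f}$, the additivity of $|\cdot|_{\mathrm{g}}$, and the identification $D_{n}\overline{f}^{\natural}=\mathrm{PD}_{n+2}\overline{f}$ with the index shift. Your extra remark checking that $F\mapsto \overline{F(0\leq n{+}2)}$ is invariant under the morphisms of $C_{n}(\overline{x},\overline{y})$ is a welcome clarification that the paper leaves implicit.
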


We close this work with an examples illustrating the meaning of Theorem \ref{p}.

\begin{exmp}{\em Consider the category  $\mathbb{I}$ of finite sets and injective maps. Clearly, we have the identity of posets  $\overline{\mathbb{I}} = \mathbb{N}$. The incidence map
$\xi \in [\overline{\mathbb{I}}_1,R]$ is given by
$$\xi[n,m]\ = \ |\mathbb{I}(\{1,...,n \}, \{1,...,m \})| \ = \ m(m-1)...(m-n+1) \ = \ \frac{m!}{(m-n)!},$$
 and the M$\ddot{\mbox{o}}$bius function $\mu \in [\overline{\mathbb{I}}_1,R]$  is given by $$\mu[n,m] \ = \  \frac{(-1)^{m-n}}{n!(m-n)!},$$ since
$$\sum_{n \leq k \leq m}\mu[n,k]\xi[k,m]  \ = \ \sum_{n \leq k \leq m}\frac{(-1)^{k-n}}{n!(k-n)!}\frac{m!}{(m-k)!} \ = \ $$
$$ {m \choose n}\sum_{s=0}^{m-n}(-1)^{s}{m-n \choose s}
\ = \  {m \choose n}(0)^{m-n}   \ = \ \delta_{n,m}.$$
The set of equivalence classes of injective maps, i.e equivalence classes of morphisms in $\mathbb{I}$, can be identified with the set
$$\{i_{n,m} \ | \ n \leq m \} ,$$
where $i_{n,m}$ is the inclusion map $$[1,n] = \{1,...,n \} \subseteq \{1,...,n,...,m \} = [1,m].$$ According to Theorem \ref{p} we must have that

$$\mu[n,m] \  = \ \widetilde{\chi}_{\mathrm{g}}D_{\ast}\overline{i}_{n,m} \ = \
\sum_{k \geq 1 }(-1)^k \big| \mathrm{PD}_k\overline{i}_{n,m} \big|_{\mathrm{g}}.$$
The proper $k$-decompositions of the morphisms $i_{n,m}$ are, up to equivalence, given by the inclusions
$$[1,n] \subseteq [1,n_1] \subseteq ...... \subseteq [1, n_{k-1}] \subseteq [1,m],$$ with
$n < n_1 < .... < n_{k-1} < m$. The automorphism group of such a decomposition has cardinality
$$\frac{1}{n!(n_1-n)!......(m-n_{k-1})!}. $$
Thus the identity $\mu[n,m] \  = \ \widetilde{\chi}_{\mathrm{g}}D_{\ast}\overline{i}_{n,m} \ $ is equivalent to
$$\frac{(-1)^{m-n}}{n!(m-n)!} \ = \
\sum_{k \geq 1 }\sum_{n < n_1...< n_{k-1} < m }\frac{(-1)^k}{n!(n_1-n)!(n_2 - n_1)!.....(m-n_{k-1})!},$$ and also to
$$(-1)^{m-n}+1 \ = \
\sum_{k \geq 2 }\sum_{n < n_1...< n_{k-1} < m }(-1)^k {m-n \choose n,\ n_1-n, \ n_2 -n_1,.....,\ m-n_{k-1}}.$$
The latter identity follows directly from  Exercise \ref{yac} below.
}
\end{exmp}

\

\begin{exc}\label{yac}{\em Fix $a \geq 2$ in $\mathbb{N}$. Show by induction that
$$\sum_{k \geq 2 }\sum_{a_1 + .... + a_k =a}(-1)^k {a \choose a_1,.....,a_k}\ = \ (-1)^a + 1,$$ where
$a_i \in \mathbb{N}_+.$ }
\end{exc}

\

\

\noindent ragadiaz@gmail.com \\
\noindent Instituto de Matem\'aticas y sus Aplicaciones\\
\noindent Universidad Sergio Arboleda, Bogot\'a, Colombia\\

\end{document}